\documentclass{article}
\usepackage[utf8]{inputenc}
\usepackage{amsfonts}
\usepackage{amsmath,amsthm}
\usepackage{amscd}
\usepackage[all,cmtip]{xy}
\usepackage{amssymb}
\usepackage{geometry}
\usepackage{fancyhdr}
\usepackage{lipsum}
\usepackage{mathtools}
\usepackage{relsize}
\usepackage{physics}
\usepackage{braket}
\usepackage{bm}
\usepackage{esint}
\usepackage{enumitem}
\usepackage{mathrsfs}
\usepackage{xcolor}
\usepackage{bbold}
\usepackage{comment}
\usepackage[
backend=biber,
style=alphabetic,
maxbibnames=99
]{biblatex} 
\newtheorem{theorem}{Theorem}

\newtheorem{corollary}[theorem]{Corollary}

\newtheorem{definition}[theorem]{Definition}

\newtheorem{lemma}[theorem]{Lemma}

\newtheorem{proposition}[theorem]{Proposition}
\newtheorem{remark}[theorem]{Remark}

\numberwithin{theorem}{section}

\def\text{\mbox}
\def\lra{\longrightarrow}

\def\bi{\begin{itemize}}
\def\ei{\end{itemize}}
\def\bem{\begin{emuerate}}
\def\eem{\end{enumerate}}
\def\beq{\begin{eqnarray*}}
\def\eeq{\end{eqnarray*}}

\def\R{\mathbb{R}} 
\def\P{\mathbb{P}} 
\def\E{\mathbb{E}} 
\def\S{\mathbb{S}} 

\def\1{\mathbb{1}}

\newcommand*\Laplace{\mathop{}\!\mathbin\bigtriangleup}

\newcommand{\lp}{\left(}
\newcommand{\rp}{\right)}
\newcommand{\lb}{\left[}
\newcommand{\rb}{\right]}
\DeclareMathOperator{\Hess}{Hess}
\newcommand{\bR}{\mathbb{R}}
\newcommand{\bS}{\mathbb{S}}
\newcommand{\eps}{\varepsilon}
\newcommand{\Vol}{\mathrm{Vol}}

\title{Sharp Riemannian heat kernel estimates on the cut locus and the Parabolic Anderson model}
\author{Hongyi Chen\thanks{Department of Mathematics, Aarhus University, Aarhus, Denmark. Email: \texttt{hchen77@math.au.dk}} \and Robert Neel\thanks{Department of Mathematics, Lehigh University, Bethlehem, Pennsylvania, USA. Email: \texttt{robert.neel@lehigh.edu}} \and Cheng Ouyang\thanks{Department of Mathematics, Statistics, and Computer Science, University of Illinois at Chicago, Chicago, Illinois, USA. Email: \texttt{couyang@uic.edu}}}

\date{}

\begin{document}

\maketitle

\begin{abstract}
\noindent{Using sharp global heat kernel bounds and geodesic comparison geometry, we show that the Dalang condition for well-posedness of the parabolic Anderson model with measure-valued initial conditions, first introduced on Euclidean space, holds on general compact Riemannian manifolds. We furthermore establish upper and lower moment bounds for all such solutions, providing evidence for intermittency in this generality. This extends and simplifies earlier work that required non-positive curvature.}
\end{abstract}

\tableofcontents

\section{Introduction}

Let \(M\) be a \(d\)-dimensional compact Riemannian manifold, and consider the parabolic Anderson model (PAM, sometimes called the stochastic heat equation with linear multiplicative noise) on \(M\), formally written as
\begin{equation}\label{eq: SHE}
    \begin{cases}
        \partial_t u(t,x)=\frac{1}{2}\Laplace_M u(t,x)+\beta u(t,x)\cdot \dot{W}, & t>0,\ x\in M,\\
        u(0,x)=\mu.
    \end{cases}
\end{equation}
Here, \(\beta>0\) is the (inverse) temperature parameter, \(\Laplace_M=\mathrm{div}(\mathrm{grad})\) denotes the Laplace--Beltrami operator on \(M\), \(\dot{W}=\dot{W}(t,x)\) is a family of space--time Gaussian noises to be specified later, and \(\mu\) is a finite measure on \(M\). A case of particular interest is when \(\mu=\delta_x\), the Dirac measure at a point \(x\in M\).

\medskip
The solution to \eqref{eq: SHE} has to be interpreted in the mild sense. That is, the solution to \eqref{eq: SHE} is understood as a (random field) solution to the following mild equation,
\begin{equation}\label{pam:mild}
    u(t,x)=\int_M P_t(x,y)\mu(dy)
    +\beta \int_0^t\int_M P_{t-s}(x,y)u(s,y)\,W(dy,ds)
    =J_\mu(t,x)+\beta I(t,x),
\end{equation}
where \(P_t(x,y)\) denotes the heat kernel on \(M\), and $J_0(t,x)$ refers to the solution to the homogeneous equation, namely,
\[
J_\mu(t,x):=\int_M P_t(x,y)\mu(dy).
\]
The stochastic integral \(I(t,x)\) is understood in the sense of It\^{o}--Walsh \cite{Walsh86,Dalang98}.

\medskip

The parabolic Anderson model arises in a wide range of problems in probability theory and mathematical physics. For instance, it is closely related to the free energy of directed polymers and to the Cole--Hopf solution of the KPZ equation \cite{ACQ11, Ka87, KPZ86}. It also has direct connections to the stochastic Burgers equation \cite{CM94} and to Majda’s model of shear-layer flow in turbulent diffusion \cite{Mj93}. Over the past few years, there has been significant progress in understanding fundamental properties of the PAM on flat spaces (e.g.\ \(M=\mathbb{R}^d\) and \(M=\mathbb{T}^d\)), including intermittency (see, e.g., \cite{Ch15, Kh14} and references therein), as well as energy landscapes and fluctuations \cite{BQS11, DGK23}. More recently, substantial advances have been made in the study of stochastic heat flow, which can be viewed as a universal scaling limit of \eqref{eq: SHE} under appropriate renormalization in dimension 2 (see\cite{SHFbyCSZ,tsai2024SHF} and the references therein).

\medskip

The present paper approaches the PAM from a different perspective: we investigate how the geometry and topology of the state space influence the model and give rise to new phenomena. The underlying Brownian motion on the state space plays a crucial role in the behavior of the solution, and Brownian paths are themselves strongly affected by the geometry of the space. It is well known that Dalang’s condition ensures the existence of a solution in the It\^{o} sense on Euclidean spaces. When \eqref{eq: SHE} is posed on Heisenberg groups, it was shown in \cite{BOTW-Heisenberg} that Dalang’s condition naturally involves the Hausdorff dimension rather than the topological dimension of the space. Moreover, new Lyapunov exponents were identified in \cite{BCHOTW} for solutions to \eqref{eq: SHE} on metric measure spaces such as metric graphs and fractals. A recent work \cite{Geng-Ouyang} further demonstrates that a novel phase transition occurs when \eqref{eq: SHE} is studied on hyperbolic spaces. 

\medskip

Motivated by the close connection between the PAM and directed polymers, the present paper focuses on the PAM with measure-valued initial conditions. In contrast to the case of regular (function-valued) initial data, the analysis in this setting necessarily involves the global geometry of the underlying space, as we briefly explain below.

\medskip

Let \(P_{t,x,y}(s,z)\) denote the density of the Brownian bridge starting at \(x\) and reaching \(y\) at time \(t\). The analysis in
\cite{ChenThesis,ChenDalangAOP,CJKS11,Huang2016OnSH}, later further developed in \cite{COV23, CO25}, indicates that the well-posedness of \eqref{eq: SHE} hinges on a careful analysis of the following integral,
\begin{align}\label{intro: main quantity for iteratin}\int_0^t \int_{M^2}P_{t,x_0,x}(s,z)P_{t,x_0',x'}(s,z')\bm{G}_{\alpha,\rho}(z,z')m(dz)m(dz')ds.\end{align}
 Here $\bm{G}_{\alpha, \rho}$ is the spatial covariance function of the noise (see Section \ref{sec: noise} below for details). It is clear that the analysis of \eqref{intro: main quantity for iteratin} requires a precise understanding of how the law of a Brownian bridge concentrates for all \(t>0\), \(0<s<t\), and all \(x,y\in M\).
Since the Brownian bridge density admits the representation
\[
P_{t,x,y}(s,z)=\frac{P_s(x,z)P_{t-s}(z,y)}{P_t(x,y)},
\]
and one typically expects Gaussian-type heat kernel estimates (see Lemma~\ref{lem: heat kernel bd}  and Corollary \ref{Prop:LiYau} below), the concentration behavior is governed by the interaction of three distance functions arising from the exponential terms in the heat kernel bounds. This leads to the function
\begin{align}\label{func: F}
F_{s,t;x,y}(z)
:=-\frac{\bm{d}(x,y)^2}{2t}
+\frac{\bm{d}(x,z)^2}{2s}
+\frac{\bm{d}(z,y)^2}{2(t-s)}.
\end{align}

This is where global geometry enters and creates the main analytic difficulty. The function \(F\) attains its minimum when \(z\) lies on a distance-minimizing geodesic connecting \(x\) and \(y\), and for small \(t\) the Brownian bridge measure is therefore highly concentrated near these minimizers. When \(x\) and \(y\) belong to each other’s cut locus, there may be multiple—possibly infinitely many—distance-minimizing geodesics connecting them, some of which may be conjugate, which makes the analysis of \(F\) substantially more delicate.

\medskip

To address this difficulty, \cite{CO25} imposed the assumption that the sectional curvature of \(M\) is non-positive. This curvature condition plays two key roles:
\begin{itemize}
\item[(1)] it guarantees that there are only finitely many distance-minimizing geodesics between any two points \(x\) and \(y\), simplifying the analysis;

\item[(2)] it places the universal cover of \(M\) in the class of \(CAT(0)\) spaces, allowing one to compare geodesic triangles in \(M\) with Euclidean triangles. As a consequence, one can show that
\begin{align}\label{eq: hebavior of $F$ in sausages}
F_{s,t;x,y}(z)\geq \max_i \bm{d}(z,x_i^*)^2,
\end{align}
where \(x_i^*\) are the minimizers of \(F\) along the distance-minimizing geodesics connecting \(x\) and \(y\).
\end{itemize}

The main contribution of the present paper is to establish well-posedness of \eqref{eq: SHE} on \emph{general compact Riemannian manifolds}, without imposing any sign condition on the sectional curvature. In particular, we allow the presence of conjugate points and thus the possibility that there exist infinitely many distance-minimizing geodesics between two points, a regime that is excluded by the non-positive curvature assumption in \cite{CO25}. This requires new ideas to control the concentration behavior of Brownian bridges beyond the $CAT(0)$ framework and to analyze the function $F$ in \eqref{func: F} in the presence of highly non-unique geodesics.

\medskip
More precisely, we show that assuming Dalang's condition and for arbitrary finite measure-valued initial data, the parabolic Anderson model admits a random field solution with sharp moment bounds. The following theorem summarizes our main well-posedness and regularity results. 

\begin{theorem}\label{thm: main result}
    Let $M$ be any compact manifold and $W=W_{\alpha,\rho}$ be the colored noise of Definition~\ref{def: G_alpha etc}. For any $\alpha>{(d-2)}/{2}$ and any finite measure $\mu$ on $M$, equation~\eqref{eq: SHE} admits a random field solution $\{u(t,x)\}_{t>0,x\in M}$ in the sense of Definition~\ref{def: SHE mild}. The solution is $L^p(\Omega)$-continuous for all $p\geq2$ and
     there exist constants $C, \theta>0$, depending on $\alpha,\beta$ and $p$, such that
    \[
    \E[|u(t,x)|^p]^{\frac{1}{p}}
    \leq CJ_\mu(t,x)e^{\theta t}.
    \]
    Here $J_\mu(t,x):=\int_M P_t(x,y) \mu(dy)$ is the homogeneous solution to the heat equation starting from $\mu$. 
\end{theorem}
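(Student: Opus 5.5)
The plan follows the Picard-iteration / moment-formula strategy of \cite{ChenDalangAOP, COV23, CO25}. The only new geometric input is a replacement for the $CAT(0)$ estimate \eqref{eq: hebavior of $F$ in sausages}.

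\textbf{Reduction to a two-point kernel.} We write $u=\sum_n \beta^n I_n$ as a chaos or Picard expansion, with $I_0=J_\mu$. Using the Burkholder--Davis--Gundy inequality for Walsh integrals with covariance $\bm G_{\alpha,\rho}$, we bound $\|u(t,x)\|_p^2$ by a series of iterated space--time integrals. We then divide by $J_\mu(t,x)J_\mu(t,x')$ and write $P_s(x,z)P_{t-s}(z,y)=P_t(x,y)\,P_{t,x,y}(s,z)$. After this normalization, every term is controlled by iterating the bridge functional \eqref{intro: main quantity for iteratin}, uniformly over endpoints $x_0,x,x_0',x'$ (integrated against $\mu$). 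It therefore suffices to prove a single kernel estimate. With $\mathcal K(t)$ the supremum of \eqref{intro: main quantity for iteratin} over all endpoints (with time window $[0,t]$), we aim to show
\[
\mathcal K(t)\le C\,t^{\gamma}\quad (t\le 1),\qquad \gamma=\alpha-\tfrac{d-2}{2}>0.
\]
The Laplace-transform / Volterra argument of \cite{ChenDalangAOP} then sums the series. It yields the bound $CJ_\mu(t,x)e^{\theta t}$ for $t\le1$, and the semigroup property plus compactness extends it to all $t$. $L^p$-continuity follows in the standard way from the same bounds applied to increments, together with continuity of $P_t$.

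\textbf{Bridge estimate.} For small $t$ we use the two-sided Gaussian bounds of Lemma~\ref{lem: heat kernel bd} and Corollary~\ref{Prop:LiYau}. They give
\[
P_{t,x,y}(s,z)\lesssim \Big(\tfrac{t}{s(t-s)}\Big)^{d/2}\Big(\tfrac{t}{?}\Big)^{\kappa}e^{-cF_{s,t;x,y}(z)}.
\]
Here the polynomial loss factor comes from the ratio of upper and lower bounds at the cut locus; the sharp bounds in Corollary~\ref{Prop:LiYau} are designed to make this factor at most a power that does not destroy integrability. We also have $\bm G_{\alpha,\rho}(z,z')\lesssim \bm d(z,z')^{2\alpha-d}$ for $2\alpha<d$, and it is bounded when $2\alpha>d$. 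The key geometric claim replacing \eqref{eq: hebavior of $F$ in sausages} is
\[
F_{s,t;x,y}(z)\ \ge\ c\,\frac{t}{s(t-s)}\,\bm d\big(z,\Gamma_{s/t}\big)^2,
\]
where $\Gamma_{s/t}$ is the set of points at fraction $s/t$ along all minimizing geodesics from $x$ to $y$. I would prove this by triangle-inequality comparison. Take $m$ minimizing $\bm d(x,m)+\bm d(m,y)$ over balls around $z$ and apply convexity of $a^2/s+b^2/(t-s)$. Then $F\ge \frac{t}{s(t-s)}(\bm d(x,z)\tfrac{t-s}{t}-\bm d(z,y)\tfrac{s}{t})^2+\frac{\bm d(x,z)+\bm d(z,y)-\bm d(x,y)}{t}\cdot(\dots)$. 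Next, use compactness: the excess $\bm d(x,z)+\bm d(z,y)-\bm d(x,y)$ controls the distance from $z$ to the union of minimizing geodesics, at least quadratically. Since $t$ is small, only $\bm d(z,\Gamma)$ at scale $\sqrt{s(t-s)/t}$ matters. The bridge then concentrates in a tube around $\Gamma_{s/t}$ of that width. The volume of such a tube is at most $C(\sqrt{s(t-s)/t})^{d-k}$, where $k$ is the dimension of the family of minimizers; this is bounded by the conjugate-point structure, but a crude bound with $k\le d-1$ should suffice.

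\textbf{Closing the estimate.} Integrating $\bm G$ against two such tubular measures gives at most $C(s(t-s)/t)^{\alpha-d/2}$, with the tube-dimension gain offsetting the sup-density loss. Integrating in $s$ then produces $t^{\gamma}$ exactly when $\alpha>(d-2)/2$. The main obstacle is the geometric step: obtaining the lower bound on $F$, and the resulting small-ball mass estimate for the bridge, uniformly when $x,y$ are mutually conjugate with a continuum of minimizers. There a quadratic lower bound may degenerate to quartic. My first attempt would be to not rely on pointwise lower bounds for $F$ at all. Instead, I would bound the bridge mass of a ball $B(w,r)$ directly via $\int_{B(w,r)}P_s(x,z)P_{t-s}(z,y)dz\le P_t(x,y)\cdot\sup$, using only the Chapman--Kolmogorov identity together with on-diagonal bounds. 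This gives $\sup_w P_{t,x,y}(s,B(w,r))\lesssim (r^2 t/(s(t-s)))^{d/2}\wedge1$ up to the cut-locus loss power, which is all the $\bm G$-integral needs.
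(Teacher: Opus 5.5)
Your reduction divides by the true $J_\mu(t,x)J_\mu(t,x')$, so everything rests on an upper bound for the true bridge density $P_s(x,z)P_{t-s}(z,y)/P_t(x,y)$. That bound needs a \emph{lower} bound on $P_t(x,y)$, and this is the gap: the factor $(t/?)^{\kappa}$ in your display is exactly where the argument is missing.

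Lemma~\ref{lem: heat kernel bd} and Proposition~\ref{Prop:LiYau} are upper bounds only, and Li--Yau loses $\epsilon$ in the exponent. Suppose the numerator is bounded above with rates $1/(c_1 s)$ and $1/(c_1(t-s))$, and the denominator below with rate $1/(c_2 t)$. With $a=s/t$, the exponent of the ratio is then $-F_{a;x,y}(z)/(c_1a(1-a)t)+(c_2^{-1}-c_1^{-1})\bm d(x,y)^2/t$. The second term is exponentially large for distant endpoints unless $c_1=c_2=2$, so no bound of the form $C\,t^{-\kappa}e^{-cF_{s,t;x,y}(z)}$ follows from what you cite.

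Even with exact rates on both sides (Hsu above, a Cheeger--Yau type bound $c\,t^{-d/2}e^{-\bm d(x,y)^2/2t}$ below), the lower bound misses the cut-locus prefactor. At antipodal points of $\mathbb{S}^d$ the true kernel is larger by a factor of order $t^{-(d-1)/2}$. Your bridge bound then has mass of order $t^{-(d-1)/2}$, not $1$, on the latitude shell for $s\in(t/4,t/2)$. The resulting bound on your $\mathcal K(t)$ is at least of order $t^{2-d}$, not $t^{\gamma}$, for $d\ge 2$.

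So the ``tube-dimension gain'' runs the wrong way. A density bound of size $(s(t-s)/t)^{-d/2}$ on a tube around a $k$-dimensional family of minimizers has mass of order $(s(t-s)/t)^{-k/2}$. Only a lower bound on $P_t(x,y)$ that sees the cut locus could compensate for this.

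Your fallback does not help. Chapman--Kolmogorov is an integral identity with no pointwise content. On-diagonal bounds discard the Gaussian factor of the numerator, leaving a division by $P_t(x,y)\approx e^{-\bm d(x,y)^2/2t}$, which is an exponential loss, not a power loss. Finally, as you suspected, your claim $F_{s,t;x,y}(z)\ge c\frac{t}{s(t-s)}\bm d(z,\Gamma_{s/t})^2$ fails uniformly near conjugate minimizers, and you offer no substitute.

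The paper avoids all of this by never normalizing by $P_t$. In $\mathcal L_1$ every heat kernel is bounded by a Li--Yau envelope $G^{\epsilon'}+1$. One then multiplies and divides by an envelope with the \emph{same} constant $c_{\epsilon'}$: $G^{\epsilon'}_t(\bm d(x_0,x))$ if $\bm d(x_0,x)<D$, and the larger $\tilde G^{\epsilon'}_t(\bm d(x_0,x))$ if $\bm d(x_0,x)\ge D$. Because the constants agree, $G^{\epsilon'}_s(\bm d(x,z))G^{\epsilon'}_{t-s}(\bm d(z,y))/G^{\epsilon'}_t(\bm d(x,y))=(a(1-a)t)^{-d/2}\exp\bigl(-F_{a;x,y}(z)/(c_{\epsilon'}a(1-a)t)\bigr)$ exactly. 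In the far case there is an extra factor $t^{(d-1)/2}$ for $t<1$, and no lower bound is ever needed.

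The only far-range geometry used is the universally valid shell inequality $F_{a;x,y}(z)\ge(\bm d(x,z)-a\bm d(x,y))^2$ of Lemma~\ref{lem: F concentration}. It needs no information about the set of minimizers or about conjugacy. The factor $t^{(d-1)/2}$ is precisely what makes the quotient integrable over that codimension-one shell, computed in exponential coordinates at $x$. For close endpoints, Lemma~\ref{lem: F concentration no cutlocus} gives concentration at the single point $\gamma_{xy}(a)$.

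The larger denominator is paid for in the prefactor via \eqref{est: LY beat Hsu larger eps}: $\tilde G^{\epsilon'}_t(r)\le C G^{\epsilon''}_t(r)$ for $r\ge D$ and $\epsilon'<\epsilon''$. This is why the iteration propagates envelopes $[G^{[n+1]}_t(\bm d(x_0,x))+1][G^{[n+1]}_t(\bm d(x_0',x'))+1]$, with slightly increasing $\epsilon$'s, rather than heat kernels.

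Bear in mind that this controls $\mathcal L_n$ relative to these envelopes, not relative to $P_t(x_0,x)P_t(x_0',x')$. Insisting on the literal normalization by $J_\mu$, as you do, is strictly more demanding. It brings you straight back to the sharp lower-bound problem at the cut locus.
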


In contrast to suggestions in earlier literature, note that here we have the same Dalang condition, namely $\alpha>(d-2)/2$, as one has for Euclidean case. Thus, if one is interested in finding geometries where the Dalang condition is different (for measure-valued initial conditions), one presumably needs geometries that deviate from the Euclidean in a more fundamental way, such as sub-Riemannian manifolds or fractals.

 Going beyond previous work, this theorem requires that we treat the geometry of Riemannian geodesics and their cut loci in full generality, allowing for the possibility that there exist infinitely many distance-minimizing geodesics connecting \(x\) and \(y\), as well as conjugate geodesics. New ideas are therefore required to analyze the function \(F\) in \eqref{func: F} and to understand the concentration behavior of Brownian bridges on \(M\). The main ingredients and novelties of our approach are as follows:
\begin{itemize}
\item[(1)] When \(x\) and \(y\) are sufficiently close (so that the connecting geodesic is unique), we show that there exists a constant \(\zeta\in(0,1)\), depending only on \(M\), such that
\begin{equation}\label{Intro: Eqn:LemmaZeta}
F_{s,t;x,y}(z) \geq \zeta\, \bm{d}(x^*,z)^2,
\end{equation}
for all \(z\in M\), where \(x^*\) denotes the unique minimizer of \(F\) along the geodesic connecting \(x\) and \(y\) (See Lemma \ref{lem: F concentration no cutlocus} for a more precise statement). This result makes essential use of the Riemannian structure, relying on triangle comparison governed by an upper bound on the sectional curvature.

\item[(2)] When \(x\) and \(y\) are far apart, and thus possibly in the cut locus (in which case there may be multiple, even infinitely many, and/or conjugate distance-minimizing geodesics ), we show that for all \(s\in(0,t)\) and all \(x,y,z\in M\), one has (see Lemma \ref{lem: F concentration} below)
\[
F_{s,t;x,y}(z)\geq \left(\bm{d}(x,z)-\frac{s}{t}\bm{d}(x,y)\right)^2.
\]

The above lower bound is globally sharp when \(M\) is a sphere. In particular, when \(x\) and \(y\) are antipodal points, the Brownian bridge measure concentrates along lines of latitude, forming a submanifold of positive dimension. In this situation, Li--Yau type heat kernel bounds are insufficient, and a sharper upper bound on the heat kernel (see Lemma~\ref{lem: heat kernel bd}) is required for the analysis.
\end{itemize}

\begin{remark}
The analysis of \eqref{intro: main quantity for iteratin} is also closely related to estimates of the Malliavin derivative of the solution $u$, which has been an important ingredient in proving certain central limit theorems for $u$ (see, e.g., \cite{CKNP21}). The analysis developed in the present paper provides such estimates on non-Euclidean spaces and opens the door to studying corresponding central limit theorems in these settings, a direction that will be pursued in future work.

\end{remark}

\medskip

The second main result of the paper establishes a matching exponential lower bound on moments of the solution.
\begin{theorem}\label{intro: thm: lower bound}
   Assume the same conditions as in Theorem \ref{thm: main result}. In addition assume $\rho>0$ for the noise $W_{\alpha,\rho}$ introduced in Definition \ref{def: G_alpha etc}.  Then for any $p\geq2$,
    \[
    \liminf_{t\uparrow\infty} \frac{1}{t}\ln\E[u(t,x)^p]>0.
    \]
\end{theorem}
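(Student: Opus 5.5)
We will prove the lower bound through the second moment and Jensen/Hölder. Since $p\ge 2$ and $\|u\|_p\ge\|u\|_2$, it suffices to show $\liminf_{t\to\infty} t^{-1}\ln\E[u(t,x)^2]>0$. Itô--Walsh isometry applied to the mild formulation \eqref{pam:mild} gives the renewal-type identity
\[
\E[u(t,x)^2]=J_\mu(t,x)^2+\beta^2\int_0^t\int_{M^2}P_{t-s}(x,y)P_{t-s}(x,y')\bm{G}_{\alpha,\rho}(y,y')\,\E[u(s,y)u(s,y')]\,m(dy)m(dy')\,ds .
\]
Iterating yields a chaos expansion with nonnegative terms, provided $\E[u(s,y)u(s,y')]\ge 0$. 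This holds because $u\ge 0$ by the weak comparison principle, or alternatively because every chaos term is a nonnegative integral of positive kernels against the positive-definite covariance. The role of $\rho>0$ is to give a uniform positive lower bound: $\bm{G}_{\alpha,\rho}$ is presumably $(\rho-\Laplace)^{-\alpha}$ type, whose kernel is positive, and the constant eigenfunction contributes at least $c_\rho:=\rho^{-\alpha}/\Vol(M)>0$. I would check from Definition~\ref{def: G_alpha etc} that $\bm{G}_{\alpha,\rho}(y,y')\ge c_\rho>0$ pointwise, using positivity of the heat kernel in the subordination formula $\bm{G}=\Gamma(\alpha)^{-1}\int_0^\infty r^{\alpha-1}e^{-\rho r}P_r\,dr$ together with $P_r\ge c>0$ for $r\ge1$ on compact $M$.

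Next, I would use that $J_\mu$ is bounded below for large times. By the heat kernel lower bound (Lemma~\ref{lem: heat kernel bd} / Corollary~\ref{Prop:LiYau}) and compactness, $P_t(x,y)\ge c_0>0$ for all $t\ge1$ and all $x,y$, so $J_\mu(t,x)\ge c_0\mu(M)=:a>0$ for $t\ge1$. I then define $f(t):=\inf_{y,y'}\E[u(t,y)u(t,y')]$. The correlation $\E[u(t,y)u(t,y')]$ satisfies the analogous two-point renewal identity with $J_\mu(t,y)J_\mu(t,y')\ge a^2$. Dropping positive contributions from $s\in[0,1]$ and using $\bm{G}\ge c_\rho$ and $\int_M P_{t-s}(y,z)m(dz)=1$ gives
\[
f(t)\ \ge\ a^2+\beta^2 c_\rho\int_1^t f(s)\,ds,\qquad t\ge 1 .
\]

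A Gronwall-type comparison (lower version, valid since $f\ge a^2$ is measurable and locally bounded by Theorem~\ref{thm: main result}) then yields $f(t)\ge a^2 e^{\beta^2c_\rho(t-1)}$. Hence $\liminf t^{-1}\ln\E[u(t,x)^2]\ge \beta^2c_\rho>0$, and Jensen extends this to all $p\ge 2$ with rate at least $\tfrac p2\beta^2c_\rho$.

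The main obstacle is rigor in the inequality step. I need nonnegativity of $\E[u(s,z)u(s,z')]$ on $s\in(0,1)$, where $J_\mu$ may be singular (e.g.\ $\mu=\delta$) but is still nonnegative, and I need a clean pointwise lower bound on $\bm{G}_{\alpha,\rho}$; this is exactly where $\rho>0$ enters. Should $\bm{G}$ be only positive-definite rather than pointwise positive, I would instead project onto the constant mode. Writing $\bar u(t)=\int_M u(t,y)\,m(dy)$ and using that $\int_{M^2}\bm{G}\,\phi\otimes\phi\ge \rho^{-\alpha}\Vol(M)^{-1}(\int\phi)^2$ for nonnegative $\phi$ gives the same Gronwall inequality for $\E[\bar u(t)^2]$. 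I would then transfer it back to the point $x$ via one more application of the renewal identity with the uniform heat kernel lower bound.
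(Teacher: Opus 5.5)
Your main line breaks at the pointwise bound $\bm{G}_{\alpha,\rho}\ge c_\rho>0$. The kernel here is not of Bessel type $(\rho-\Laplace)^{-\alpha}$. By \eqref{E:NoiseCov} and \eqref{def: G_alpha etc}, $\bm{G}_{\alpha,\rho}=\rho/m_0+\bm{G}_\alpha$, and, as the paper notes right after \eqref{def: G_alpha etc}, $\int_M\bm{G}_\alpha(x,y)\,m(dy)=0$ with $\bm{G}_\alpha$ not nonnegative. So for $0<\rho<-m_0\inf\bm{G}_\alpha$, which the theorem allows, $\bm{G}_{\alpha,\rho}$ changes sign. Then neither $f(t)\ge a^2+\beta^2c_\rho\int_1^tf(s)\,ds$ nor dropping the $s\in[0,1]$ contribution is justified.

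The same sign issue breaks your chaos-expansion argument for $\E[u(s,y)u(s,y')]\ge0$: positive definiteness gives $\langle f,f\rangle\ge0$, not $\langle f,g\rangle\ge0$. Use $u\ge0$ from Theorem \ref{thm:Weak Comparison} instead. Also, Lemma \ref{lem: heat kernel bd} and Proposition \ref{Prop:LiYau} are upper bounds. The lower bound $P_r\ge c_0>0$ for $r\ge1$ comes from $\sup_{x,y}|P_r(x,y)-m_0^{-1}|\le Ce^{-\lambda_1 r}$ together with positivity and continuity of $P$ on $[1,R]\times M^2$.

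Your fallback, however, is a complete proof once you apply positive definiteness pathwise, inside the expectation. By \eqref{E:NoiseCov}, for every real $\phi\in L^2(M)$,
\[
\iint_{M^2}\bm{G}_{\alpha,\rho}(y,y')\phi(y)\phi(y')\,m(dy)m(dy')\ge\rho a_0^2=\frac{\rho}{m_0}\bigl(\int_M\phi\,dm\bigr)^2 .
\]
So the constant is $\rho/m_0$, not $\rho^{-\alpha}/\Vol(M)$, and no sign condition on $\phi$ is needed. Stochastic Fubini with test function $1$, exactly as in Lemma \ref{lem: cont dep on initial data} (which also gives integrability of the isometry integrand), yields $\bar u(t)=\mu(M)+\beta\int_0^t\int_Mu(s,y)\,W(ds,dy)$. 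Hence
\[
\E[\bar u(t)^2]=\mu(M)^2+\beta^2\int_0^t\E\bigl\|u(s,\cdot)\bigr\|_{\alpha,\rho}^2\,ds\ge\mu(M)^2+\frac{\beta^2\rho}{m_0}\int_0^t\E[\bar u(s)^2]\,ds ,
\]
so $\E[\bar u(t)^2]\ge\mu(M)^2e^{\beta^2\rho t/m_0}$. Next, keep only $s\le t-1$ in the isometry for $u(t,x)$ and use $u\ge0$:
\[
\E[u(t,x)^2]\ge\frac{\beta^2\rho}{m_0}\int_0^{t-1}\E\Bigl[\Bigl(\int_MP_{t-s}(x,y)u(s,y)\,m(dy)\Bigr)^2\Bigr]ds\ge\frac{\beta^2\rho\,c_0^2}{m_0}\int_0^{t-1}\E[\bar u(s)^2]\,ds .
\]
For $\mu(M)>0$ this gives $\liminf_{t\to\infty}t^{-1}\ln\E[u(t,x)^2]\ge\beta^2\rho/m_0$, the same rate as the paper.

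This is a genuinely different route from the paper's. The paper uses the strong comparison principle to get $\P[\inf_xu(T,x)\ge\varepsilon]>0$ (Corollary \ref{cor: strict pos}). It then restarts the equation at time $T$ via the Markov property and, on that event, applies the function-valued lower bound \cite[Theorem 5.1]{CO25} as a black box.

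Your version needs only three things: nonnegativity of $u$, the integrability already established in Lemma \ref{lem: cont dep on initial data}, and the large-time heat kernel lower bound. It avoids strict positivity, conditioning and the Markov restart, and in effect runs the zero-mode mechanism directly on measure-valued data. The paper's route is shorter, given the cited theorem and the comparison machinery it develops anyway.
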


This result generalizes the lower bound obtained in \cite{CO25} from function-valued initial conditions to measure-valued initial conditions. The proof builds on the arguments developed in \cite{CO25}, together with recent progress on (strict) positivity of solutions to \eqref{eq: SHE} on non-Euclidean spaces \cite{FanSunYang25+}. The argument shows that the exponential growth rate is ultimately driven by the ergodicity of Riemannian Brownian motion on compact manifolds.

\medskip

The rest paper is organized in the following manner. In section 2, we recall some useful preliminary heat kernel estimates and define our noise and notion of solution, which we take from Walsh \cite{Walsh86}. In section 3 we prove Theorem \ref{thm: main result}, beginning with careful analysis of the three-distance function $F$ and it's relation to Riemannian geometry. We then use this to perform a modified version of the iteration introduced in \cite{ChenThesis,ChenDalangAOP,ChenKim19}, where the modifications are needed due to geometry. In section 4, we prove comparison principles for the equation, which we use to prove Theorem \ref{intro: thm: lower bound}, thus extending that result from \cite[Section 5]{CO25} as well.

\subsection{Notations}\label{notation}

To finish the introduction, we list some conventions and notation which we will employ for the rest of the paper.
\begin{itemize}
    \item We follow the usual convention and use $C_1,C_2,C_3$  and $c_1,c_2$ etc. to denote generic constants that are independent of quantities of interest. The exact values of these constants may change from line to line.
    \item For $t>0,x,y\in M$, $P_t(x,y)$ will denote the heat kernel on $M$.
    \item For $x\in M$ and $r>0$, $B(x,r)$ will denote the geodesic ball of radius $r$ centered at $x$.
    \item For $x,y\in M$, $\bm{d}(x,y)$ will denote the distance between the two points (under the Riemannian metric). The same for $\bm{d}(A,B)$ for any two Borel sets $A,B$.
    \item Let $m_0=m(M)=\Vol(M)$ be the volume of the manifold $M$ (which is finite because $M$ is compact).
    \item $D_M:=\max_{x,y\in M} \bm{d}(x,y)$ will denote the diameter of $M$, which is finite since $M$ is compact.
  
    \item For any measurable $f:M\to \R$, $\int_{M}f(x)m(dx)$ will denote integration with respect to the volume measure on $M$.
    \item $B_{\R^d}(r)$ will be a ball of radius $r>0$ centered at the origin in $\R^d$.
    \item For any measure $\mu$ on $M$ and $t>0,x\in M$, $J_\mu(t,x):=\int_M P_t(x,y) \mu(dy)$ will denote the homogeneous solution to the heat equation starting from $\mu$.
\end{itemize}

\section{Preliminaries}
In this section, we first present several preliminary results concerning the heat kernel and its upper bounds, which play an essential role in our analysis. We then briefly recall the definition of colored noise on compact manifolds introduced in \cite{COV23, CO25}. These noises are intrinsic to Riemannian manifolds and are smoother than white noise, which allows us to study \eqref{eq: SHE} in the It\^{o} sense.

\subsection{Upper Bounds of the heat kernel}

In contrast to the setting of \cite{CO25}, we allow the possibility that there exist infinitely many distance-minimizing geodesics connecting $x$ and $y$ when the two points are far apart. As a consequence, the measure induced by the Brownian bridge density $P_{t,x,y}(s,z)$ may concentrate on a submanifold of positive dimension. In this regime, standard Li--Yau type heat kernel estimates are no longer sharp enough to capture the correct quantitative behavior of $P_{t,x,y}(s,z)$ for small $t$. For this reason, we employ two different types of heat kernel estimates: one adapted to the case when $x$ and $y$ are close to each other (Li-Yau bound), and another tailored to the case when they are far apart (see Lemma \ref{lem: heat kernel bd} below).

\begin{definition}\label{def: LY and Hsu Gaussians}
    For $\epsilon\geq0$, let $c_\epsilon:=(2+\epsilon)$ and define two Gaussian type functions \begin{equation}\label{eq: LY and Hsu Gaussian def}
        G^\epsilon_t(r):=t^{-\frac{d}{2}}\exp\left(-\frac{r^2}{c_\epsilon t}\right),\quad\text{and}\quad \tilde{G}^\epsilon_t(r):=(t^{-\frac{d-1}{2}}\vee 1)G^\epsilon_t(r).
    \end{equation}
\end{definition}

The following Li-Yau type heat kernel estimate is standard \cite{LiYau86}, and will be used frequently in the sequel.
\begin{proposition}[Li-Yau Bound]\label{Prop:LiYau}
    For any $\epsilon>0$, $m\geq 1$ and $t>0$, we have $$P_t(x,y)\leq C(G^\epsilon_t(\bm{d}(x,y))+t^m\wedge1),\quad\mathrm{for\ all}\ x,y\in M.$$
\end{proposition}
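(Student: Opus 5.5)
We want $P_t(x,y)\le C\bigl(G^\epsilon_t(\bm d(x,y))+t^m\wedge 1\bigr)$ on a compact $M$. The plan is to split into the regimes $t\ge 1$ and $t\le 1$, and to take the classical Li--Yau estimate \cite{LiYau86} as the main input. Recall that it states the following: on a complete manifold with $\mathrm{Ric}\ge -K$, for every $\delta>0$ there is $C_\delta$ with
\[
P_t(x,y)\le C_\delta\, \Vol(B(x,\sqrt t))^{-1/2}\Vol(B(y,\sqrt t))^{-1/2}\exp\Bigl(-\frac{\bm d(x,y)^2}{(2+\delta)t}+C_\delta K t\Bigr).
\]
Here the constant $2$ appears because our generator is $\tfrac12\Laplace_M$. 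In the standard normalization $\Laplace_M$ one gets $4+\delta$, and rescaling time by $1/2$ converts this to $2+\delta$. Since $M$ is compact, the Ricci curvature is bounded below, so this estimate applies.

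\emph{Small times, $0<t\le1$.} By compactness, Bishop--Gromov volume comparison together with the local Euclidean structure give $\Vol(B(x,r))\ge c\,r^d$ uniformly in $x\in M$ and $r\le D_M$. For $t\le 1$ we have $\sqrt t\le 1$, so $\Vol(B(x,\sqrt t))\ge c\,t^{d/2}$ once $D_M\ge1$ (if $D_M<1$, use that $\Vol(B(x,\sqrt t))=m_0\ge m_0 t^{d/2}$ whenever $\sqrt t\ge D_M$). The factor $e^{C_\delta K t}$ is bounded by $e^{C_\delta K}$. Choosing $\delta=\epsilon$ then yields $P_t(x,y)\le C\,t^{-d/2}e^{-\bm d^2/((2+\epsilon)t)}=C\,G^\epsilon_t(\bm d(x,y))$. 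The extra term is not needed in this range.

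\emph{Large times, $t\ge1$.} Here we avoid Li--Yau, because its exponential factor $e^{CKt}$ grows. Instead we use the semigroup property and symmetry:
\[
P_t(x,y)=\int_M P_{t-1/2}(x,z)P_{1/2}(z,y)\,m(dz)\le \sup_{z}P_{1/2}(z,y)\int_M P_{t-1/2}(x,z)\,m(dz)\le \sup_{z,y}P_{1/2}(z,y).
\]
The right-hand side is finite by the small-time bound, or simply by smoothness of the heat kernel on a compact manifold. Stochastic completeness gives $\int_M P_s(x,z)\,m(dz)=1$. Hence $P_t(x,y)\le C=C\,(t^m\wedge 1)$ for $t\ge 1$, which is exactly what the added term is designed to absorb. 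Putting the two regimes together gives the proposition, with $C$ depending on $\epsilon$ and $M$.

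\emph{Main point of care.} The key step is getting the sharp constant $2+\epsilon$ in the Gaussian exponent. This is where we rely on the full-strength Li--Yau theorem, with arbitrary $\delta>0$ under a Ricci lower bound, rather than a crude Gaussian bound. We must also make sure the time normalization $\tfrac12\Laplace_M$ is tracked correctly. The uniform volume lower bound for small balls is routine on compact $M$, via a finite atlas or comparison geometry.
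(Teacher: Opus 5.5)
Your proof is correct and follows the same route as the paper, which simply invokes the Li--Yau upper bound \cite{LiYau86} without further argument. You fill in details the paper leaves implicit: the time rescaling from $\Laplace_M$ to $\frac12\Laplace_M$, the uniform lower bound $\Vol(B(x,r))\ge c\,r^d$, the boundedness of the $e^{CKt}$ factor for $t\le 1$, and the case $t\ge 1$ via the semigroup property and stochastic completeness (a fine elementary alternative to the spectral-gap argument the paper uses for the analogous step in Lemma~\ref{lem: heat kernel bd}).
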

For convenience, we will set $m=1$ for the rest of the paper. When $x$ and $y$ are far apart, the following global heat kernel estimate is sharper for small $t$.

\begin{lemma}\label{lem: heat kernel bd}
    There exits a constant $C>0$ such that  for all $x, y\in M$,
    \begin{align*}
    P_t(x,y)\leq \left\{
    \begin{array}{ll}
    C\tilde{G}^0_t(\bm{d}(x,y)),\quad&{0<t\leq1};\\
    C, & t>1.
    \end{array}
    \right. 
    \end{align*}

\end{lemma}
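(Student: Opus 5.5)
For $t>1$ the bound is a soft consequence of compactness. I would use the semigroup property and Cauchy--Schwarz:
\[
P_t(x,y)=\int_M P_{t-1/2}(x,z)P_{1/2}(z,y)\,m(dz)\le \sup_{z,w}P_{1/2}(z,w)\int_M P_{t-1/2}(x,z)\,m(dz)\le \sup_{z,w}P_{1/2}(z,w).
\]
This holds because the heat kernel on a compact manifold is stochastically complete, so $\int_M P_s(x,z)\,m(dz)=1$. The supremum is finite since $P_{1/2}$ is continuous on the compact set $M\times M$. Alternatively, Proposition~\ref{Prop:LiYau} at $t=1/2$ gives the same bound. So the content of the lemma is the small-time regime $0<t\le 1$.

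For $0<t\le 1$ I would split according to $\bm{d}(x,y)$. Fix $\delta>0$ smaller than half the injectivity radius of $M$.

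If $\bm{d}(x,y)\le\delta$, then $y$ is not in the cut locus of $x$. The Minakshisundaram--Pleijel parametrix (equivalently, Varadhan/Molchanov small-time asymptotics, uniform on $\{\bm{d}\le\delta\}$) gives
\[
P_t(x,y)\le C t^{-d/2}e^{-\bm{d}(x,y)^2/(2t)}\big(1+O(t)\big).
\]
The constant in the exponent here is exactly $2$, and the prefactor is $t^{-d/2}\le \tilde G$-type prefactor. This gives $C G^0_t(\bm{d}(x,y))\le C\tilde G^0_t(\bm{d}(x,y))$.

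If $\bm{d}(x,y)\ge\delta$, the sharp exponent $1/2$ must be kept while the point may lie in the cut locus, where the prefactor can degrade. I would use Chapman--Kolmogorov at the midpoint,
\[
P_t(x,y)=\int_M P_{t/2}(x,z)P_{t/2}(z,y)\,m(dz),
\]
and split the $z$-integral into two regions. On $A=\{z:\bm{d}(x,z)\le\delta \text{ or } \bm{d}(z,y)\le\delta\}$, one factor is controlled by the local expansion above and the other by Li--Yau. On the complement, both factors are at distance $\ge\delta$.

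A cleaner route that avoids the $\epsilon$-loss of Li--Yau is to apply the local expansion iteratively via a chain. Alternatively I would invoke Hsu's global estimate directly. Hsu showed that on a compact manifold
\[
P_t(x,y)\le C t^{-(d-1/2)}\cdots
\]
or more precisely, that for $0<t\le1$,
\[
P_t(x,y)\le C\,t^{-d+\frac12}e^{-\bm{d}(x,y)^2/(2t)}.
\]
This is proved by writing $P_t(x,y)$ via the strong Markov property at the first hitting time $\tau$ of the sphere $\partial B(y,\delta)$:
\[
P_t(x,y)=\mathbb E_x\big[P_{t-\tau}(X_\tau,y);\tau<t\big]+(\text{killed kernel, which vanishes since } x\notin B(y,\delta)).
\]
Inside $B(y,\delta)$ the local Gaussian expansion holds with exponent $(\delta')^2/(2(t-\tau))$. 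The law of $\tau$ is controlled by the local expansion around $x$ combined with a radial-process comparison, giving a density of order $t^{-?}e^{-(\bm{d}(x,y)-\delta)^2/(2\tau)}$. Minimizing the sum of the exponents over $\tau$ produces exactly $\bm{d}(x,y)^2/(2t)$ by convexity, namely the identity $a^2/\tau+b^2/(t-\tau)\ge (a+b)^2/t$. The leftover Laplace integral in $\tau$ costs only a polynomial factor. Counting powers, the total prefactor is at most $t^{-d/2}\cdot t^{-(d-1)/2}$, which is exactly the $\tilde G^0_t$ prefactor.

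The main obstacle is this last step: bounding the hitting density of $\partial B(y,\delta)$ with the sharp exponent and an explicit polynomial prefactor, uniformly in $x$. I would first try to cite Hsu's result (E.~Hsu, \emph{Stochastic Analysis on Manifolds}, the global upper bound $P_t(x,y)\le Ct^{-d+1/2}e^{-\bm{d}^2/2t}$) and check that its power is no worse than $t^{-d/2-(d-1)/2}$. Since $d-\tfrac12 = \tfrac d2+\tfrac{d-1}{2}$, the powers agree exactly, and this closes the case.
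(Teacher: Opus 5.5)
Your proposal is correct. For the substantive case $0<t\le 1$ it ends up exactly where the paper does: you cite Hsu's global bound $P_t(x,y)\le Ct^{-d+\frac12}e^{-\bm{d}(x,y)^2/(2t)}$ and check that $t^{-d+\frac12}=t^{-\frac d2}\,t^{-\frac{d-1}{2}}$ is precisely the $\tilde G^0_t$ prefactor; the parametrix and hitting-time sketches before that are unnecessary, and the hitting-time one is incomplete as written.

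For $t>1$ you use Chapman--Kolmogorov with stochastic completeness, giving $P_t(x,y)\le\sup_{M\times M}P_{1/2}$, where the paper uses the spectral-gap estimate $\sup_{x,y}|P_t(x,y)-m_0^{-1}|\le Ce^{-\lambda_1 t}$. Both arguments are valid, and yours is slightly more elementary (no Cauchy--Schwarz is actually needed).
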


\begin{proof}
    For $0<t\leq 1$, \cite{EltonBBridge} and \cite[Theorem 5.3.4]{EltonBook} gives $$P_t(x,y)\leq C\tilde{G}^0_t(\bm{d}(x,y)).$$
     The classic spectral gap for the Laplace-Beltrami operator on compact Riemannian manifolds (see for example \cite[Chapter 3.2]{jost2005riemannian}) gives that there is $C>0$ depending only on $M$ such that, for any $t>1$, $$\sup_{x,y\in M}|P_t(x,y)-m_0^{-1}|\leq C e^{-\lambda_1t}.$$
    The proof is thus completed. 
\end{proof}

\medskip
It is clear that the following elementary relations hold for $G$ and $\tilde{G}$.
\begin{itemize}
\item  For    any $t>0$ and $0\leq \epsilon'<\epsilon$, we have \begin{equation}\label{est: LY beats LY larger eps}
        G^{\epsilon'}_t(r)<G^\epsilon_t(r),\quad \mathrm{for \  all}\ r>0.
    \end{equation}

\item    For any $\epsilon>0$ and $t\in (0,1)$, we have\begin{equation}\label{est: Hsu beats LY same eps}
        G^\epsilon_t(r)<\tilde{G}^\epsilon_t(r),\quad\mathrm{for\ all}\ r>0.
    \end{equation}
\item    Furthermore, for any fixed  $r'>0$ there exists some constant $C>0$ depending on $r'$ such that for any $t\in (0,1)$, and $0\leq \epsilon'<\epsilon$ we have\begin{equation}\label{est: LY beat Hsu larger eps}
        \tilde{G}^{\epsilon'}_t(r)<CG^\epsilon_t(r),\quad\mathrm{for\ all}\ r\geq r'.
    \end{equation}

\end{itemize}

\subsection{Colored Noise and Mild Solution on Compact Riemannian Manifolds}\label{sec: noise}
In this section, we briefly recall the construction of the \emph{colored noise} on compact manifolds introduced in \cite{COV23, CO25}. In particular, Proposition~\ref{Prop: G_alpha} below characterizes the regularity properties of these noises.

\medskip

Denote by $0=\lambda_0<\lambda_1\leq\lambda_2\leq\dots$ the eigenvalues of $-\triangle_M$ and by $\phi_0, \phi_1,\phi_2,\dots$ an orthonormal sequence of corresponding eigenfunctions.
For any $\varphi\in L^2(M)$, there
is a unique decomposition
\begin{align}\label{E: func decomp}
  \varphi(x) = \sum_{n\geq0}a_n\phi_n(x).
\end{align}
In particular, $a_0={m_0}^{-1/2}\int_M\varphi  dm$ where $m_0=m(M)$ is the volume of $M$.

\medskip

A family of spatial Gaussian noises $\dot{W}$ on $M$ with parameters
$\alpha$, $\rho\ge 0$ can be constructed as follows. Let $(\Omega, \mathcal{F},\mathbb{P})$ be a
complete probability space such that for any $\varphi$ and $\psi$ n
$L^2(M)$, both ${W}\left(\varphi\right)$ and
${W}\left(\psi\right)$ are centered Gaussian random variables with
covariance given by
\begin{align}\label{E:NoiseCov}
 \mathbb{E} \left({W}\left(\varphi\right){W}\left(\psi\right)\right)
    = \langle\varphi,\psi\rangle_{\alpha,\rho} :=\rho a_0 {b}_0+\sum_{n\not=0} \frac{a_n{b}_n}{\lambda_n^{\alpha}}
   \end{align}
where $a_n$'s and $b_n$'s are the coefficients of $\varphi$ and $\psi$ in decomposition \eqref{E: func decomp},
respectively. For $\rho>0$, let $\mathcal{H}^{\alpha,\rho}$ be the completion of
$L^2(M)$ under $\langle\cdot,\cdot\rangle_{\alpha,\rho}$. It is clear that
$(\Omega, \mathcal{H}^{\alpha,\rho},\mathbb{P})$ gives an abstract Wiener space. When $\rho=0$, some special care is needed in order to identify a suitable Hilbert space $\mathcal{H}^{\alpha,0}.$ We refer readers to \cite{CO25} for more details in this special case.   It is clear from~\eqref{E:NoiseCov} that $L^2(M)\subset
  \mathcal{H}^{\alpha,\rho}\subset\mathcal{H}^{\beta,\rho}$ for
  $0\leq\alpha<\beta$. Moreover, the colored noise includes the white noise on $M$ if we pick $\rho=1$ and $\alpha=0$.

\medskip
The covariance structure $\langle\cdot,\cdot\rangle_{\alpha,\rho}$ admits a kernel. Indeed, let $P_t(x,y)$ be the heat kernel on $M$ and set for $\alpha,\rho>0$, 
\begin{align}\label{def: G_alpha etc}\bm{G}_{\alpha}(x,y):=\frac{1}{\Gamma(\alpha)}\int_0^\infty t^{\alpha-1}\left(P_t(x,y)-\frac{1}{m_0}\right) dt,\quad \textnormal{and}\ \ \bm{G}_{\alpha,\rho}(x,y):=\frac{\rho}{m_0}+\bm{G}_{\alpha}(x,y).
\end{align}

It is easy to see that one has
\begin{align*}
\langle\varphi,\psi\rangle_{\alpha,\rho}=\iint_{M^2}\phi(x)\bm{G}_{\alpha,\rho}(x,y)\psi(y)m(dx)m(dy).
\end{align*}

    Note that $\bm{G}_{\alpha}$ is the analogue of the Riesz kernel on $\mathbb{R}^d$. By \eqref{def: G_alpha etc} one has $\int_M \bm{G}_{\alpha}(x,y)m(dy)=0$. Hence $\bm{G}_{\alpha}$ is not non-negative. However, it can be shown that $\bm{G}_{\alpha}$ is bounded below on $M$ (see \cite{COV23} for example). One therefore can always pick a large enough $\rho$ so that the spatial covariance function $\bm{G}_{\alpha,\rho}$ is non-negative.

The following proposition gives the regularity of $\bm{G}_{\alpha}$ (hence $\bm{G}_{\alpha,\rho}$ as well) on diagonal. 

\begin{proposition}\label{Prop: G_alpha}
    For any $\alpha> 0$, we have $$\abs{\bm{G}_{\alpha}(x,y)}\leq \begin{cases}
        C_\alpha,& \alpha>d/2\\
        C_\alpha(1+\log^-\bm{d}(x,y)),& \alpha=d/2\\
        C_\alpha \bm{d}(x,y)^{2\alpha-d},& \alpha<d/2,
    \end{cases}$$
    where $\log^-(z)=\max(z,-\log z)$ and $\bm{d}(x,y)$ is the Riemannian distance on $M$.
\end{proposition}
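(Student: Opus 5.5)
The plan is to split the time integral in \eqref{def: G_alpha etc} at $t=1$, and in the small-time part also at $t=r^2$ with $r:=\bm{d}(x,y)$:
\[
\Gamma(\alpha)\bm{G}_\alpha(x,y)=\int_0^1 t^{\alpha-1}\Big(P_t(x,y)-\frac{1}{m_0}\Big)dt+\int_1^\infty t^{\alpha-1}\Big(P_t(x,y)-\frac{1}{m_0}\Big)dt=:I_1+I_2 .
\]

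The tail $I_2$ is handled by the spectral gap estimate from the proof of Lemma~\ref{lem: heat kernel bd}, namely $\sup_{x,y}|P_t(x,y)-m_0^{-1}|\le Ce^{-\lambda_1 t}$ for $t>1$. This gives $|I_2|\le C\int_1^\infty t^{\alpha-1}e^{-\lambda_1 t}\,dt=C_\alpha<\infty$, uniformly in $x,y$. In the first piece, the constant contributes $\int_0^1 t^{\alpha-1}m_0^{-1}\,dt=1/(\alpha m_0)$, which is finite because $\alpha>0$. It therefore suffices to bound $\int_0^1 t^{\alpha-1}P_t(x,y)\,dt$.

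For that integral I would apply the Li--Yau bound, Proposition~\ref{Prop:LiYau} with $m=1$. The term $t\wedge 1$ contributes a constant. The main term is
\[
\int_0^1 t^{\alpha-1-d/2}e^{-r^2/(c_\epsilon t)}\,dt .
\]
\begin{itemize}
\item If $\alpha>d/2$, the exponent $\alpha-1-d/2>-1$, so dropping the exponential gives a uniform constant.
\item If $\alpha\le d/2$, I split at $t=r^2$ (assuming $r<1$; if $r\ge 1$ the whole integral is bounded by $\int_0^1 t^{\alpha-1-d/2}e^{-1/(c_\epsilon t)}\,dt<\infty$).
\begin{itemize}
\item On $[r^2,1]$, dropping the exponential gives $\int_{r^2}^1 t^{\alpha-1-d/2}\,dt$. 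This is $\le C r^{2\alpha-d}$ when $\alpha<d/2$, and equals $2\log(1/r)$ when $\alpha=d/2$.
\item On $(0,r^2)$, substitute $t=r^2 u$. This yields $r^{2\alpha-d}\int_0^1 u^{\alpha-1-d/2}e^{-1/(c_\epsilon u)}\,du=C\,r^{2\alpha-d}$, and the integral is finite because of the exponential decay at $u\to 0$. In the critical case $\alpha=d/2$ this contribution is just a constant.
\end{itemize}
\end{itemize}

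Combining these pieces gives $C_\alpha$, $C_\alpha(1+\log^-r)$, and $C_\alpha r^{2\alpha-d}$ in the three regimes. In the case $\alpha<d/2$, the additive constants are absorbed into $r^{2\alpha-d}$ because $r\le D_M$ and so $r^{2\alpha-d}\ge D_M^{2\alpha-d}>0$.

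The only point needing care is that the statement bounds $|\bm{G}_\alpha|$, not $\bm{G}_\alpha$ from above. This is handled because every estimate above is applied to absolute values: $P_t\ge 0$, and the subtracted constant and the tail $I_2$ have been bounded separately in absolute value. I do not expect any genuine obstacle; the computation is routine. The one subtlety is the uniformity of the Li--Yau constant on a compact manifold, which is given by Proposition~\ref{Prop:LiYau}.
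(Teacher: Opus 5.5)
Your argument is correct. It takes a genuinely different route from the paper, which proves nothing here and simply cites \cite{Brosamler}.

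You derive the bound directly from two facts the paper already uses elsewhere:
\begin{itemize}
\item the uniform Gaussian upper bound of Proposition~\ref{Prop:LiYau}, on the time interval $(0,1]$;
\item the spectral-gap decay $\sup_{x,y}|P_t(x,y)-m_0^{-1}|\le Ce^{-\lambda_1 t}$ from the proof of Lemma~\ref{lem: heat kernel bd}, on $(1,\infty)$.
\end{itemize}
The singular behaviour then comes out of the scaling substitution $t=r^2u$ on $(0,r^2)$ and the explicit integral of $t^{\alpha-1-d/2}$ on $[r^2,1]$.

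What this buys is that the proposition becomes self-contained within the paper's own toolkit. It also shows that nothing beyond a uniform Gaussian upper bound and a spectral gap is needed. So the same computation would go through in other settings with those two ingredients, with $d$ replaced by the relevant volume-growth exponent.

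What it does not give is a matching lower bound or the exact leading singularity of $\bm{G}_\alpha$ near the diagonal; that would require genuine small-time asymptotics of $P_t$, not just an upper bound. The paper, however, only ever uses the upper bound on $|\bm{G}_\alpha|$.

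Your handling of the edge cases is also sound:
\begin{itemize}
\item For $r\ge 1$, or for the additive constants when $\alpha<d/2$, domination by $r^{2\alpha-d}$ follows from $r\le D_M$.
\item In the critical case $\alpha=d/2$, your bound $C\bigl(1+2\log(1/r)\bigr)$ for $r<1$, and the constant bound for $r\ge1$, are dominated by $C(1+\log^- r)$. This holds whether one reads the paper's definition literally as $\log^-(z)=\max(z,-\log z)$ or as the presumably intended $\max(0,-\log z)$.
\end{itemize}
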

\begin{proof}
    See \cite{Brosamler}.
\end{proof}

To close the discussion in this section, we define the noise on $\R_+\times M$ that is white in time and colored in space. 
\begin{definition}\label{def: Colored Noise}
    Let $\alpha>0$ and consider the following Hilbert space of space-time functions, 
\begin{align}\label{eq-hilb}
\mathcal{H}_{\alpha,\rho}=L^2(\mathbb{R}_+,\, \mathcal{H}^{\alpha,\rho}).  
\end{align}
On a complete probability space $(\Omega, \mathcal{F},\mathbb{P})$ we define a centered Gaussian family $\{W_{\alpha,\rho}(\phi); \phi\in L^2(\R_+)\cap\mathcal{H}_{\alpha,\rho}(M)\}$, whose covariance is given by 
\[
\mathbf E\left[ W_{\alpha,\rho}(\varphi) W_{\alpha,\rho}(\psi)\right]
=\int_{\R_+}\ \left\langle \varphi (t,\cdot) , \psi (t,\cdot)\right\rangle_{\alpha,\rho}  dt \, ,
\]
for $\varphi$, $\psi$ in $\mathcal{H}_{\alpha,\rho}$ in the space variable. This family is called
colored noise on $M$ that is white in time.
\end{definition}

\medskip
To simplify notation, we will drop the indexes $\alpha$ and $\rho$ and use $W$ for $W_{\alpha,\rho}$ throughout the rest of the paper.

\medskip
With the above, the following definition of a solution to \eqref{eq: SHE} is standard and will be used throughout the paper. Let $\mathcal{B}$ be the Borel $\sigma-$algebra of $M$. For $A\in \mathcal{B}$, $t\geq 0$, define $W_t(A):=W(\1_{[0,t]}(s)\1_{A}(x)).$  Define the filtration $(\mathcal{F}_t)_{t\geq 0}$ by $$\mathcal{F}_t:=\sigma(W_s(A):0\leq s\leq t, A\in \mathcal{B})\vee \mathcal{N},$$ where $\mathcal{N}$ is the collection of $\P-$null sets of $\mathcal{F}$. 

\begin{definition}\label{def: SHE mild}
    A random field $\set{u(t,x)}_{t\geq 0,x\in M}$ is an It\^{o} mild solution to \eqref{eq: SHE} if all the following holds.
    \begin{enumerate}[label=(\roman*)]
        \item Every $u(t,x)$ is $\mathcal{F}_t-$measurable.
        \item $u(t,x)$ is jointly measurable with respect to $\mathcal{B}\otimes \mathcal{F}$, where $\mathcal{B}$ is the Borel $\sigma-$algebra on $\R_+\times M$
        \item For all $(t,x)\in (0,\infty)\times M$, we have $$\E \left[\int_0^t\iint_{M^2} \bm{G}_{\alpha,\rho}(z,z')P_{t-s}(x,z)u(s,z)P_{t-s}(x,z')u(s,z')m(dz)m(dz')ds\right]< \infty$$
        \item $u$ satisfies \eqref{pam:mild}.
    \end{enumerate}
\end{definition}

\section{Proof of Theorem \ref{thm: main result}}\label{Sec: proof of thm 1}

Our general strategy for establishing existence and uniqueness of the mild solution to equation \eqref{eq: SHE} with arbitrary measure-valued initial conditions follows the iteration procedure developed in \cite{ChenThesis,ChenDalangAOP} (see also \cite{CO25}). For the convenience of the reader and for the sake of completeness, we briefly summarize this approach.

\medskip
For functions $h,w:\R_+\times M^4 \to \R$, define the operator $\rhd$ by
\[
h\rhd w(t,x_0,x,x_0',x')
:=\int_0^t ds \iint_{M^2} m(dz)m(dz')\,
h(t-s,z,x,z',x')\,w(s,x_0,z,x_0',z')\,\bm{G}_{\alpha,\rho}(z,z').
\]
We then introduce the sequence $\{\mathcal{L}_n\}_{n\geq 0}$ recursively by
\begin{align}\label{def: L_n}
\mathcal{L}_n(t,x_0,x,x_0',x')
:=\begin{cases}
P_t(x_0,x)P_t(x_0',x'), & n=0,\\
\mathcal{L}_0\rhd \mathcal{L}_{n-1}(t,x_0,x,x_0',x'), & n>0.
\end{cases}
\end{align}

The role of $\mathcal{L}_n$ can be understood as follows. Define
\[\widetilde{J}_\mu(t,x,x'):=J_\mu(t,x)J_\mu(t,x'), \qquad g(t,x,x'):=\E[u(t,x)u(t,x')].\]
By It\^{o}'s isometry, we have\[g(t,x,x')=\widetilde{J}_\mu(t,x,x')+\beta^2\int_0^t  \iint_{M^2} \,P_{t-s}(x,z)P_{t-s}(x',z')\bm{G}_{\alpha,\rho}(z,z')\,g(s,z,z')m(dz)m(dz')ds.\]

Iterating this relation yields
\begin{equation}\label{iteration corelation}
g(t,x,x')
=\widetilde{J}_\mu(t,x,x')
+\beta^2\iint_{M^2}\mu(dz)\mu(dz')\sum_{n=0}^\infty \beta^{2n}
\mathcal{L}_n(t,x,z,x',z').
\end{equation}
The validity of this computation relies on the convergence of the series
\begin{equation}\label{K_beta}
\mathcal{K}_\beta(t,x,z,x',z')
:=\sum_{n=0}^\infty \beta^{2n}\mathcal{L}_n(t,x,z,x',z').
\end{equation}

As shown in \cite{ChenThesis,ChenDalangAOP}, the existence and uniqueness of a mild solution to \eqref{eq: SHE}, as well as moment estimates for the solution, hinge on obtaining suitable bounds on $\mathcal{L}_n$. Moreover, these works observe that $\mathcal{L}_n$ can be controlled inductively once a proper estimate for $\mathcal{L}_1$ is available (see \cite[Sections 4--5]{COV23} or \cite[Section 4]{CO25} for a more recent exposition). Indeed, $\mathcal{L}_1$ is a variant of the quantity in \eqref{intro: main quantity for iteratin}, whose analysis lies at the heart of the problem.

\medskip

The following theorem is the key ingredient for the success of the above iteration procedure.

\medskip

\begin{theorem}\label{thm: L1 bd}
Let $M$ be any compact Riemannian manifold. For any $t>0$, define for $0\leq s\leq t$
\[
k^1(s):=k^1_L(s)+k^1_S(s),
\]
where $k^1_L$ is defined in Lemma \ref{lem: large time L1 bd} and $k^1_S$ is defined in Lemma \ref{lem:short time L1 bound}.
If $W$ satisfies Dalang's condition $\alpha>{(d-2)}/{2}$, then
\[
k^1(s)\leq C\big(s^{\frac{2\alpha-d}{2}}+1\big).
\]
Moreover,
\begin{align}\label{eq: L1 bd}
\mathcal{L}_1(t,x,y,x',y')
\leq C \big[G^{[2]}_t(d(x,y))+1\big]
\big[G^{[2]}_t(d(x',y'))+1\big]
\int_0^t k^1(s)\,ds.
\end{align}
Here $G^{[2]}$  is introduced in \eqref{def: G^ns} below.
\end{theorem}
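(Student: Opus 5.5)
Writing $\mathcal{L}_1$ out via \eqref{def: L_n},
\[
\mathcal{L}_1(t,x,y,x',y')=\int_0^t\iint_{M^2}P_{t-s}(z,y)P_{t-s}(z',y')P_s(x,z)P_s(x',z')\bm{G}_{\alpha,\rho}(z,z')\,m(dz)m(dz')\,ds,
\]
I factor out $P_t(x,y)P_t(x',y')$ to obtain the two Brownian bridge densities $P_{t,x,y}(s,z)P_{t,x',y'}(s,z')$, as in \eqref{intro: main quantity for iteratin}. It is cleaner, however, to bound the numerator directly by Gaussian majorants and keep the prefactor as $G^{[2]}_t(\bm d(x,y))+1$. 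I expect $G^{[2]}_t$ to be a Gaussian of the form $G^{\epsilon}_t$ (or $\tilde G^\epsilon_t$) with a slightly enlarged constant, playing the role of the product of two kernels after completing the square. The plan is to split time into a large-time part ($t$, $s$ or $t-s$ bounded below), which gives $k^1_L$, and a short-time part, which gives $k^1_S$.

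\textbf{Large time.} Here Lemma~\ref{lem: heat kernel bd} bounds one or both of the factors by a constant. The remaining $z$-integral is $\int P_s(x,z)P_s(x',z')\bm G_{\alpha,\rho}(z,z')\,dz\,dz'$, and Proposition~\ref{Prop: G_alpha} combined with Chapman--Kolmogorov and the Li--Yau bound (Proposition~\ref{Prop:LiYau}) gives a bound by $C(s^{(2\alpha-d)/2}+1)$. This uses that $\int G_s(\bm d(x,z))G_s(\bm d(x',z'))\bm d(z,z')^{2\alpha-d}\,dz\,dz'\lesssim s^{(2\alpha-d)/2}$, which is integrable in $s$ exactly when $\alpha>(d-2)/2$.

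\textbf{Short time.} For small $t$ I bound each product $P_s(x,z)P_{t-s}(z,y)$ by $C\,G_t(\bm d(x,y))\,(\text{prefactor})\,e^{-F_{s,t;x,y}(z)/c}$ using Lemma~\ref{lem: heat kernel bd} or Li--Yau, with $F$ as in \eqref{func: F}. The bridge is then compared with a Gaussian in $z$ centred on the geodesic. There are two regimes.
\begin{itemize}
\item[(a)] If $\bm d(x,y)$ is below the injectivity-type scale, I use \eqref{Intro: Eqn:LemmaZeta} (Lemma~\ref{lem: F concentration no cutlocus}), $F\ge\zeta\,\bm d(x^*,z)^2$, with the bridge variance $s(t-s)/t$. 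The bridge then acts like a heat kernel of time $\sim s(t-s)/t$ centred at $x^*$, and the $\bm G$ integral gives $(s(t-s)/t)^{(2\alpha-d)/2}\le C\big(s^{(2\alpha-d)/2}+(t-s)^{(2\alpha-d)/2}\big)$. Changing variables shows this is integrable, so it is absorbed into $k^1$.
\item[(b)] If $\bm d(x,y)\ge r'$, I use Lemma~\ref{lem: F concentration}, $F\ge(\bm d(x,z)-\tfrac st\bm d(x,y))^2$. This only localizes $z$ near a sphere around $x$, a shell of width $\sqrt{s(t-s)/t}$. The lost $d-1$ dimensions of concentration are compensated by the extra factor $t^{-(d-1)/2}$ in $\tilde G$ from Lemma~\ref{lem: heat kernel bd}. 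Using \eqref{est: LY beat Hsu larger eps}, this factor is reconverted to $G^\epsilon$ at the cost of $\epsilon$, absorbed into $G^{[2]}$.
\end{itemize}

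\textbf{Main obstacle.} The hardest step is regime (b). The shell of $z$-values has volume of order $\sqrt{s(t-s)/t}$ only, with no $d$-dimensional concentration, so the $\bm G_\alpha$ singularity in $(z,z')$ must be controlled using only one-dimensional concentration of each variable. To handle it I would integrate out $z'$ first, using $\sup_z\int P_{\cdot}\,\bm G\lesssim s^{(2\alpha-d)/2}+1$ (via Chapman--Kolmogorov), and only then use the shell volume bound for $z$. This should yield an exponent no worse than $(2\alpha-d)/2$, matching the stated bound on $k^1$.
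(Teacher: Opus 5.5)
Your architecture is the same as the paper's. You split at $t=1$. For close pairs you use the $\zeta$-bound of Lemma~\ref{lem: F concentration no cutlocus}, so the bridge acts like a Gaussian of variance $s(t-s)/t$. For far pairs you factor out $\tilde G_t$ instead of $G_t$, use Lemma~\ref{lem: F concentration} to get shell concentration whose volume is offset by $t^{(d-1)/2}$, and pay $\epsilon$ through \eqref{est: LY beat Hsu larger eps}. You then control the Riesz factor by integrating one variable with a supremum over the other.

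The gap is exactly at the step you call the main obstacle. When $(x',y')$ is also a far pair, the normalized $z'$-integrand is, up to constants,
\[
B(z')=t^{\frac{d-1}{2}}\Bigl(\frac{s(t-s)}{t}\Bigr)^{-\frac d2}\exp\Bigl(-\frac{\bigl(\bm d(x',z')-\frac st\,\bm d(x',y')\bigr)^2}{c\,s(t-s)/t}\Bigr).
\]
This is not a heat kernel, so the estimate $\sup_z\int P_\cdot\,\bm G\lesssim s^{(2\alpha-d)/2}+1$ cannot be invoked for it. Chapman--Kolmogorov does not help either: in $\int P_s(x',z')P_{t-s}(z',y')\bm G_{\alpha,\rho}(z,z')\,m(dz')$ three functions of $z'$ are multiplied, so there is no semigroup identity to use. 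What you actually need is the bridge--Riesz bound
\[
\sup_{z\in M}\int_M B(z')\,\bm d(z,z')^{2\alpha-d}\,m(dz')\le C\Bigl[\Bigl(\frac{s(t-s)}{t}\Bigr)^{\frac{2\alpha-d}{2}}+1\Bigr],
\]
that is, \eqref{est: bridge density with riesz}. This is also what the $G^1(*)G^1(*')$ term in the paper's $k^1_S$ requires. Your proposal only says this ``should'' work, but it is the technical heart of the far case.

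Be careful how you prove that bound. One tempting reduction is to bound $\bm d(z,z')^{2\alpha-d}$ by the radial defect $|\bm d(x',z')-\frac st\bm d(x',y')|^{2\alpha-d}$ wherever the latter is smaller; this is what the paper's own proof does on $M_{x,x'}$. It trades a point singularity for a codimension-one one, and $\int|r-r_0|^{2\alpha-d}\,dr$ diverges once $2\alpha-d\le -1$, a range that Dalang's condition allows. An argument valid for all $0<\alpha<d/2$ uses only two facts you already have:
\begin{itemize}
\item $\|B\|_\infty\le (s(t-s)/t)^{-d/2}$, since the exponential is at most $1$ and $t\le 1$;
\item $\|B\|_{L^1(M)}\le C$, which is your shell-volume bound, i.e.\ \eqref{est: bridge density}.
\end{itemize}
Split the integral at $r=\sqrt{s(t-s)/t}$ and use $m(B(z,\rho))\le C\rho^d$:
\[
\int_M B\,\bm d(z,\cdot)^{2\alpha-d}\,dm\le \|B\|_\infty\int_{B(z,r)}\bm d(z,\cdot)^{2\alpha-d}\,dm+r^{2\alpha-d}\|B\|_{L^1(M)}\le C\,r^{2\alpha-d}.
\]
Alternatively, if you only want \eqref{eq: L1 bd}, you can avoid shells altogether. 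If $\bm d(x',y')\ge D$ and $t\le 1$, every $z'$ is at distance at least $D/2$ from $x'$ or from $y'$, so one of the two kernels is uniformly bounded and $P_s(x',z')P_{t-s}(z',y')\le C[P_s(x',z')+P_{t-s}(z',y')]$. Your single-heat-kernel estimate then applies verbatim, and the target factor $G^{[2]}_t(\bm d(x',y'))+1\ge 1$ costs nothing. This route does not, however, bound the specific $k^1_S$ of Lemma~\ref{lem:short time L1 bound}, whose denominators are $\tilde G_t$.
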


 The remainder of Section \ref{Sec: proof of thm 1} is devoted primarily to the proof of Theorem \ref{thm: L1 bd}. Once \eqref{eq: L1 bd} is established, estimates for $\mathcal{L}_n$ are derived in detail in Section \ref{Sec Ln bd}, and the proof of Theorem \ref{thm: main result} is completed in Section \ref{Sec: existence and moment bound}.

\subsection{Properties of the Three Distances Function $F$}

{As discussed in the introduction, the analysis of the quantity
\eqref{intro: main quantity for iteratin} (or equivalently $\mathcal{L}_1$)
relies crucially on understanding the interplay among the three distance
functions in $F_{s,t;x,y}(z)$ (see \eqref{func: F}). Elementary considerations show that $F_{s,t;x,y}$ is always nonnegative and
attains its minimum value $0$ precisely along distance-minimizing geodesics
connecting $x$ and $y$. Consequently, the behavior of $F_{s,t;x,y}$ in a
neighborhood of its minimizers governs the concentration properties of the
Brownian bridge measure $P_{t,x,y}(s,z)$, especially in the small-time regime.
As one would expect, this behavior reflects both the local and global geometry
of the underlying manifold $M$. In this section, we deal with the geometry of geodesics in its full generality and  study the behavior of $F$
near its minimizers without imposing any curvature assumptions on $M$.

To this end, for $a\in(0,1)$ and $x,y\in M$, we introduce the function
$F_{a;x,y}:M\to\R_+$ defined by
\[
F_{a;x,y}(z)
:=(1-a)\bm{d}(x,z)^2+a\bm{d}(z,y)^2-a(1-a)\bm{d}(x,y)^2 ,
\]
and obtained by factoring $\frac{1}{s(t-s)/t}$ out of $F_{s,t;x,y}$ and letting $a=\frac{s}{t}$. Consideration of $F_{a;x,y}$ with $a=\frac{1}{2}$ has a long history in the study of small-time heat kernel asymptotics on Riemannian manifolds, especially on the cut locus, going back to \cite{Molchanov} and continuing through the recent work \cite{WithLudo}.

One of the key differences between the present work and \cite{CO25} is the following two
results, which provide effective control of $F_{a;x,y}$ when the points $x$
and $y$ are far apart, and close together, respectively. Note that neither result requires that $M$ is compact, and thus we state them in more generality, with an eye toward future work on the PAM on non-compact manifolds.

\begin{lemma}\label{lem: F concentration}
Let $M$ be any complete (Riemannian) manifold. For every $a\in(0,1)$ and all $x,y,z\in M$,
\begin{equation}\label{Lem1 F}
F_{a;x,y}(z)\geq \big(\bm{d}(x,z)-a\,\bm{d}(x,y)\big)^2.
\end{equation}
\end{lemma}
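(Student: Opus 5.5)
This inequality uses only the triangle inequality, so the plan is to reduce it to a one-variable quadratic statement. Write $p=\bm{d}(x,z)$, $q=\bm{d}(z,y)$ and $D=\bm{d}(x,y)$. Expanding the right-hand side of \eqref{Lem1 F} and rearranging, the claim becomes
\[
(1-a)p^2+aq^2-a(1-a)D^2-p^2+2apD-a^2D^2\geq 0,
\]
that is,
\[
a\bigl(q^2-p^2+2pD-D^2\bigr)\geq 0 .
\]
Since $a>0$, we need $q^2\geq (p-D)^2$, i.e.\ $q\geq |p-D|$. This is exactly the reverse triangle inequality $\bm{d}(z,y)\geq |\bm{d}(x,z)-\bm{d}(x,y)|$, which holds in any metric space.

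The steps are:
\begin{enumerate}
\item Expand both sides of \eqref{Lem1 F} as polynomials in $p,q,D$.
\item Check that the $D^2$ terms combine, via $-a(1-a)-a^2=-a$, and that the $p^2$ terms combine, via $(1-a)-1=-a$. The difference then factors as $a\bigl(q^2-(p-D)^2\bigr)$.
\item Apply the triangle inequality.
\end{enumerate}

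No curvature, completeness, or uniqueness of geodesics is needed. Completeness only ensures that $\bm{d}$ is a genuine metric, with minimizing geodesics realizing distances, but the argument does not even use geodesics. The only place requiring care is the algebra in step 2. I expect no real obstacle. As a sanity check, equality holds exactly when $q=|p-D|$. On the sphere with $x,y$ antipodal this holds along the latitude $\bm{d}(x,z)=aD$, which is consistent with the sharpness remark in the introduction.
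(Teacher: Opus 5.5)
Your proof is correct and is essentially the paper's argument: in your notation ($D=\bm{d}(x,y)$) the paper uses the defects $r=p-aD$ and $\ell=(1-a)D-q$ to write $F_{a;x,y}(z)-r^2=a(r-\ell)\bigl[2(1-a)D-(r+\ell)\bigr]$. This is exactly your factorization $a(p+q-D)(q+D-p)=a\bigl(q^2-(p-D)^2\bigr)$, and the paper's two triangle-inequality checks $r-\ell\geq 0$ and $r+\ell\leq 2(1-a)D$ are precisely the two halves of your $q\geq|p-D|$.

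One minor correction to your sanity check: for antipodal $x,y$ on the sphere, $q=D-p$ for every $z$, so equality holds identically in $z$, and the latitude $p=aD$ is simply where both sides vanish.
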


\begin{remark}
The lemma is motivated by an explicit computation in the case
$M=\mathbb{S}^{d-1}$. When $x$ and $y$ are antipodal points, the inequality
in \eqref{Lem1 F} becomes an equality. In this setting, the minimizers of
$F_{a;x,y}$ form a latitude sphere (depending on $a$), which is a submanifold
of codimension one. This configuration represents the ``worst-case'' scenario
for concentration, in contrast to situations where the set of minimizers
consists of finitely many points.
\end{remark}
}

\begin{proof}
To set up the proof, we introduce some notation. Let
\[
r= \bm{d}(x,z)-a\bm{d}(x,y) \quad\text{and}\quad \ell=(1-a)\bm{d}(x,y)-\bm{d}(y,z),
\]
with both considered as functions of $z$. (Geometrically, $r$ is the signed distance from the sphere of radius $ad(x,y)$ around $x$, with $r$ positive outside of the ball and negative inside, while $\ell$ is the signed distance from the sphere of radius $(1-a)\bm{d}(x,y)$ around $y$, with $\ell$ negative outside of the ball and positive inside.)  We think of them as the right and left defects from optimizing the energy, as measured by $F_{a;x,y}$. (With the notation chosen based on one way the picture might be drawn.) The triangle inequality gives
\[\begin{split}
\bm{d}(x,z)+\bm{d}(z,y)\geq \bm{d}(x,y) & \Rightarrow a\bm{d}(x,y)+r+(1-a)\bm{d}(x,y)-\ell \geq \bm{d}(x,y) \\
&\Rightarrow r-\ell \geq 0 ,
\end{split}\]
and $r=\ell$ exactly when $z$ lies on a minimal geodesic from $x$ to $y$. 

With a bit of algebra, we can write $F_{a;x,y}$ in terms of $r$ and $\ell$. To simplify the notation, we write $d$ for $\bm{d}(x,y)$ {(note that dimension does not appear in this section)}. We have
\[\begin{split}
F_{a;x,y}(z) &= (1-a)\lb a^2d^2+2ard+r^2\rb +a\lb (1-a)^2d^2-2(1-a)\ell d+\ell^2\rb -a(1-a)d^2   \\
&= a^2(1-a)d^2 +a(1-a)^2d^2 -a(1-a)d^2 +a(1-a)\lp 2rd-2\ell d\rp +(1-a)r^2+a\ell^2 \\
&= 2a(1-a)d\lp r-\ell\rp +(1-a)r^2 +a\ell^2 .
\end{split}\]
(Recalling that $r-\ell \geq 0$, it's clear from this that $F_{a;x,y}$ is non-negative, and $F_{a;x,y}(z) =0$ exactly when $r=\ell=0$, which happens when $z$ is ``$a$ of the way'' along a minimal geodesic from $x$ to $y$. This explains the geometry of $F_{a;x,y}$ and the intuition behind $r$ and $\ell$.)

Moreover, the claim in the lemma is exactly that $F_{a;x,y}\geq r^2$, which means it's enough to show that $F_{a;x,y}-r^2\geq 0$ for all $z$. Continuing from the previous computation, we have
\[\begin{split}
F_{a;x,y}(z) -r^2&=  2a(1-a)d\lp r-\ell\rp +(1-a)r^2 +a\ell^2 -r^2 \\
&=  2a(1-a)d\lp r-\ell\rp +a\lp \ell^2-r^2 \rp \\
&= 2a(1-a)d\lp r-\ell\rp +a\lp \ell-r \rp\lp \ell+r\rp \\
&= a(r-\ell) \lb 2(1-a)d-(\ell+r)\rb .
\end{split}\]
Since $r-\ell \geq 0$, to show this last line is non-negative, we need to show that $2(1-a)d\geq \ell +r$. But the triangle inequality plus rewriting $\bm{d}(x,z)$ and $\bm{d}(y,z)$ in terms of $r$, $\ell$, and $d$ gives
\[\begin{split}
\bm{d}(x,z)-\bm{d}(y,z) \leq \bm{d}(x,y)  & \Rightarrow
ad+r -\lb (1-a)d-\ell\rb \leq d \\
 &\Rightarrow  2ad +r+\ell \leq 2d \\
  & \Rightarrow r+\ell \leq 2d(1-a) ,
\end{split}\]
exactly as desired.
\end{proof}

We need a better estimate for close points.
If $\bm{d}(x,y)$ is less than the injectivity radius of $M$, then there is a unique minimizing geodesic $\gamma_{xy}$ from $x$ to $y$, with the convention that $\gamma_{xy}$ has constant speed parametrization with $\gamma_{xy}(0)=x$ and $\gamma_{xy}(1)=y$. With this notation, we can state our next result.

\begin{lemma}\label{lem: F concentration no cutlocus}
Suppose M has positive injectivity radius (meaning it is bounded from below by a positive constant) and sectional curvature bounded from above; note that this always holds if $M$ is compact. Then we can find constants $\zeta\in(0,1)$ and $D>0$ depending only on these two bounds (and with $D$ less than the injectivity radius of $M$) such that, for any $x,y\in M$ with $\bm{d}(x,y)< D$,
\begin{equation}\label{Eqn:LemmaZeta}
F_{a;x,y}(z) \geq \zeta \bm{d}^2(\gamma_{xy}(a),z)
\end{equation}
for any $a\in[0,1]$ and any $z\in M$.
\end{lemma}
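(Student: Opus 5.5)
We split according to whether $z$ is close to or far from the point $p:=\gamma_{xy}(a)$. Throughout we use the identity from the proof of Lemma~\ref{lem: F concentration}:
\[
F_{a;x,y}(z)=2a(1-a)d\,(r-\ell)+(1-a)r^2+a\ell^2 .
\]
This gives $F_{a;x,y}(z)\geq (1-a)r^2+a\ell^2$, and $F_{a;x,y}(z)\geq r^2$ by Lemma~\ref{lem: F concentration}; by the symmetric argument also $F_{a;x,y}(z)\geq \ell^2$. We fix a curvature upper bound $K>0$ and the injectivity radius $i_M$. We choose $D$ small compared with $\min(i_M,\pi/\sqrt K)$, and a radius $R$ with $2D<R<\min(i_M,\pi/(2\sqrt K))/2$.

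\emph{Far case: $\bm{d}(p,z)\geq R$.} Since $\bm{d}(x,p)=ad\leq D$, we get $\bm{d}(x,z)\geq R-D$, and so $r=\bm{d}(x,z)-ad\geq R-2D$. Choose $R\geq 4D$, so that $R-2D\geq R/2$. Then $F\geq r^2$ is comparable to $\bm{d}(x,z)^2$, which in turn is comparable to $\bm{d}(p,z)^2$, because $\bm{d}(p,z)\leq \bm{d}(x,z)+D\leq 2\bm{d}(x,z)$. This yields \eqref{Eqn:LemmaZeta} with a universal constant such as $\zeta=1/16$; no curvature is needed here. Note that this argument works for all $z\in M$, including those beyond the injectivity radius.

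\emph{Near case: $\bm{d}(p,z)<R$.} Now $x$, $y$, $z$ and $p$ all lie in a ball of radius about $2R$. This ball is strongly convex and lies inside the injectivity radius, and there the function $q\mapsto \frac12\bm{d}(q,z)^2$ is smooth. By Hessian comparison (Rauch, using only the upper curvature bound), it satisfies
\[
\Hess\,\tfrac12\bm{d}(\cdot,z)^2\geq \kappa\, g,
\]
where $\kappa=\sqrt K\rho\cot(\sqrt K\rho)$ at distance $\rho\leq 2R+D$; choosing $R$ small makes $\kappa\geq 1/2$. Set $f(\tau)=\bm{d}(\gamma_{xy}(\tau),z)^2$ for $\tau\in[0,1]$. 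Then
\[
f''(\tau)\geq 2\kappa\, |\dot\gamma_{xy}|^2=2\kappa d^2 .
\]
The standard interpolation inequality for a function with $f''\geq c$ gives
\[
(1-a)f(0)+af(1)-f(a)\geq \tfrac c2 a(1-a)\quad (c=2\kappa d^2),
\]
that is,
\[
(1-a)\bm{d}(x,z)^2+a\bm{d}(y,z)^2\geq \bm{d}(p,z)^2+\kappa a(1-a)d^2 .
\]
Subtracting $a(1-a)d^2$ yields
\[
F_{a;x,y}(z)\geq \bm{d}(p,z)^2-(1-\kappa)a(1-a)d^2 .
\]
This is not yet sufficient, since the error term $(1-\kappa)a(1-a)d^2$ can dominate when $\bm{d}(p,z)\lesssim d$.

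The main obstacle is removing this error term; this is where real care is needed. The plan is to replace the comparison above with the sharper, curvature-dependent version: the CAT($K$) (Alexandrov) comparison of the triangle $x,y,z$ with a triangle in the model sphere $\S^2_K$. In a sphere of small radius, an explicit computation in the model space should give
\[
F^{K}_{a}\geq \zeta_K\, \bm{d}_K(\bar p,\bar z)^2
\]
with $\zeta_K\to 1$ as the triangle's diameter times $\sqrt K$ tends to $0$. The model computation can be done by noting that $F$ vanishes to second order exactly at $\bar p$. Its Hessian there equals $2\kappa$-type multiples of the metric in every direction, including along the geodesic, since $F$ restricted to the geodesic is $0$ at $\bar p$ with positive second derivative. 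A Taylor argument, uniform over small triangles by compactness and scaling, then controls $F$ from below. Transferring back requires that $F$ be monotone in the triangle's side lengths in the right way. Since $a(1-a)d^2$ is fixed and only $\bm{d}(x,z)$ and $\bm{d}(y,z)$ change, CAT($K$) comparison gives $\bm{d}(p,z)\leq \bm{d}_K(\bar p,\bar z)$ for a comparison triangle with the same side lengths; hence $F$ (which is the same in $M$ and in the model) is at least $\zeta_K\bm{d}(p,z)^2$. Finally, taking $\zeta$ to be the minimum of the constants from the far and near cases completes the proof, with the endpoint cases $a\in\{0,1\}$ being trivial.
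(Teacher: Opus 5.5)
Your proposal is correct. It has the same skeleton as the paper's proof: a far/near split in $\bm{d}(\gamma_{xy}(a),z)$, reduction of the near case to the constant-curvature-$K$ sphere via the upper curvature bound, and a second-order analysis of $F$ there. The comparison step, however, is genuinely different.

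The paper builds a tubular neighborhood $T_\varepsilon$ of the extended segment. It uses right-angle hinge comparison at the foot point $\pi(z)$, together with an explicit Fermi-coordinate expansion on $\mathbb{S}^n(1/\sqrt K)$, to get $F_{a;x,y}(z)\geq u^2+C\rho^2$ with $u=\bm{d}(\gamma_{xy}(a),\pi(z))$. Hinges give no upper bound on $\bm{d}(\gamma_{xy}(a),z)$ without a lower curvature bound, so the paper then falls back on $\bm{d}(\gamma_{xy}(a),z)\leq u+\rho$.

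You instead apply the CAT$(K)$ inequality to the whole triangle $xyz$. Since $F_{a;x,y}(z)$ depends only on the three side lengths, it equals its model value exactly. CAT$(K)$ then supplies precisely the missing upper bound $\bm{d}(p,z)\leq \bm{d}_K(\bar p,\bar z)$. This removes the foot point, the extension of $\gamma_{xy}$, and the lossy triangle-inequality step; your sentence about monotonicity in the side lengths is unnecessary. The cost is invoking the standard but slightly larger black box that small convex balls in a manifold with sectional curvature $\leq K$ are CAT$(K)$. Your far case, via $F\geq r^2$ from Lemma~\ref{lem: F concentration}, is likewise a tidy replacement for the paper's Step 2.

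The model estimate, which you leave as a plan (``should give''), goes through directly with no compactness argument:
\begin{itemize}
\item On $\mathbb{S}^2_K$ the functions $\bm{d}_K(\bar x,\cdot)^2$ and $\bm{d}_K(\bar y,\cdot)^2$ have third covariant derivatives bounded by some $C_K$ on balls of radius $\pi/(2\sqrt K)$. Hence $|\nabla^3F|\leq C_K$ uniformly in $a$.
\item At the minimum, $F(\bar p)=0$, $\nabla F(\bar p)=0$, and $\Hess F(\bar p)\geq 2\sqrt K d\cot(\sqrt K d)\,g\geq g$ once $d\sqrt K\leq 1$.
\item Taylor expansion along unit-speed geodesics from $\bar p$ then gives $F\geq \frac12 t^2-\frac{C_K}{6}t^3\geq \frac14 t^2$ for $t=\bm{d}_K(\bar p,\bar z)\leq r_0(K)$.
\end{itemize}

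Say explicitly that every model vertex lies within $R+2D$ of $\bar p$, so that $R+2D\leq r_0(K)$ suffices. This bound must come from the side lengths, because CAT$(K)$ bounds $\bm{d}_K(\bar p,\bar z)$ from below, not from above.

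A few smaller fixes:
\begin{itemize}
\item Work in $B(p,R)$, which already contains $x,y,z$ and has radius below the convexity radius under your choice of $R$. A ball of radius ``about $2R$'' need not be convex.
\item ``A sphere of small radius'' should read ``a small region of $\mathbb{S}^2_K$''.
\item The Hessian-comparison detour can simply be dropped.
\end{itemize}
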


\begin{proof}
Because the injectivity radius is positive, we can assume that $D>0$ is less than the injectivity radius, so that for any $x$ and $y$ with $\bm{d}(x,y)<D$, there is a unique minimizing geodesic connecting them, and thus $\gamma_{xy}$ is well defined. (Even if the injectivity radius of $M$ is infinite, we choose some finite $D$ for convenience, to avoid multiple cases in what follows. But see Remark \ref{Rem:ImprovingZeta}.) By completeness, $\gamma_{xy}$ can be extended indefinitely in both directions, and in particular, $\gamma_{xy}(t)$ is defined for all $t\in \bR$. While $\gamma_{xy}$ might not be minimizing between all choices of $\gamma_{xy}(t)$ and $\gamma_{xy}(t')$, it will be minimizing if $t$ and $t'$ are sufficiently close. We assume that we always have $\bm{d}(x,y)<D$ for $D$ as above, and note that we are allowed to further reduce $D$ as necessary. 

We now proceed in several steps.

\emph{Step 1}: Suppose that $t$ is such that $\gamma_{xy}$ is the unique minimizing geodesic between $\gamma_{xy}(t)$ and $x$ and between $\gamma_{xy}(t)$ and $y$ (which will be the case for $t$ in some neighborhood of $[0,1]$, since $\bm{d}(x,y)$ is strictly less than the injectivity radius). Then, again letting $\bm{d}(x,y)=d$ for ease of notation and taking the parametrization of $\gamma_{xy}$ into account, we have that
\[\begin{split}
&\bm{d}\lp x,\gamma_{xy}(t)\rp = |td|, \quad \bm{d}\lp y,\gamma_{xy}(t)\rp = |td-d|, \\
&\quad \text{and}\quad
\bm{d}\lp \gamma_{xy}(a),\gamma_{xy}(t)\rp = |td-ad| .
\end{split}\]
We compute that
\[\begin{split}
F_{a;x,y}\lp \gamma_{xy}(t) \rp &= (1-a)(td)^2+a(td-d)^2 -a(1-a)d^2 \\
&= \lp (t-a)d\rp^2 \\
&= \bm{d}^2\lp \gamma_{xy}(a),\gamma_{xy}(t)\rp .
\end{split}\]
This means that for $z$ on $\gamma_{xy}$, close enough to the segment between $x$ and $y$, \eqref{Eqn:LemmaZeta} becomes an equality with $\zeta=1$.

\emph{Step 2}: From here, we break the argument into two cases, namely when the point $z$ is far from $\gamma_{xy}(a)$ and when $z$ is near $\gamma_{xy}(a)$.

We begin with far points. Because $\bm{d}\lp x,\gamma_{xy}(a)\rp = a\bm{d}(x,y)$ and $\bm{d}\lp \gamma_{xy}(a),y\rp = (1-a)\bm{d}(x,y)$, the triangle inequality gives that
\[
\bm{d}(x,z)\geq \bm{d}\lp \gamma_{xy}(a),z \rp - a\bm{d}(x,y) \quad\text{and}\quad \bm{d}(z,y)\geq \bm{d}\lp \gamma_{xy}(a),z \rp - (1-a)\bm{d}(x,y) .
\]
Assuming that $\bm{d}\lp \gamma_{xy}(a),z \rp>D$, the right-hand side of both of these inequalities will be non-negative for any $a\in(0,1)$, and thus we can square both inequalities to get lower bounds for $\bm{d}(x,z)$ and $\bm{d}(z,y)$.
Using these lower bounds then gives a lower bound for $F$ as
\[\begin{split}
F_{a;x,y}(z) &\geq (1-a)\lb  \bm{d}\lp \gamma_{xy}(a),z \rp - a\bm{d}(x,y) \rb^2 +a\lb \bm{d}\lp \gamma_{xy}(a),z \rp  - (1-a)\bm{d}(x,y) \rb^2\\
&\quad -a(1-a)\bm{d}^2(x,y) \\
&= \bm{d}^2\lp \gamma_{xy}(a),z \rp-4a(1-a)\bm{d}(x,y)\cdot \bm{d}\lp \gamma_{xy}(a),z \rp ,
\end{split}\]
where we've omitted the details of simplifying the right-hand side, since it merely consists in multiplying everything out and then canceling and combining like terms. Since the supremum of $a(1-a)$ for $0\leq a\leq 1$ is $\frac{1}{4}$, we have that
\[
F_{a;x,y}(z) \geq \bm{d}^2\lp \gamma_{xy}(a),z \rp-\bm{d}(x,y)\cdot \bm{d}\lp \gamma_{xy}(a),z \rp .
\]
Since we assume that $\bm{d}(x,y)<D$, this show that, for any $\zeta\in(0,1)$, we have
\[
F_{a;x,y}(z) \geq \zeta \bm{d}^2\lp \gamma_{xy}(a),z \rp
\]
whenever $\bm{d}\lp \gamma_{xy}(a),z \rp \geq \frac{D}{1-\zeta}$, for any $a\in(0,1)$. Note that $\frac{D}{1-\zeta}>D$, so this condition implies our earlier requirement that $\bm{d}\lp \gamma_{xy}(a),z \rp>D$. For future use, we rephrase the conclusion as follows: for any $\eps>0$, we can find $D>0$ and $\zeta_f \in(0,1)$, depending only on $\eps$ and the injectivity radius of $M$, such that for any $x$ and $y$ with $\bm{d}(x,y)<D$,
\[
F_{a;x,y}(z) \geq \zeta_f \bm{d}^2\lp \gamma_{xy}(a),z \rp
\]
whenever $\bm{d}\lp \gamma_{xy}(a),z \rp \geq \eps$, for any $a\in[0,1]$.

\emph{Step 3}: The situation for $z$ near $\gamma_{xy}(a)$ is more complicated. We let $\overline{\gamma}_{xy}$ denote the segment of $\gamma_{xy}$ from $x$ to $y$, so ${\gamma_{xy}(a):0\leq a\leq 1}$. We will rely on comparison with spheres, and thus we first consider $F$ on spheres themselves. In particular, for $K>0$, let $\bS^n(1/\sqrt{K})$ be the sphere of dimension $n$ and constant curvature $K$. Then $\bS^n(1/\sqrt{K})$ has injectivity radius and diameter both equal to $\frac{\pi}{\sqrt{K}}$. In particular, we have (exponential) polar coordinates $(r,\theta)\in[0,\frac{\pi}{\sqrt{K}})\times \bS^{n-1}$ around any point $x$. Suppose that $y$ is such that $\bm{d}(x,y)\leq \frac{\pi}{4\sqrt{K}}$. Then there is some $\theta_0\in \bS^{n-1}$ such that $\overline{\gamma}_{xy}$ corresponds to $[0,\bm{d}(x,y)]\times\theta_0$ in polar coordinates around $x$.

We wish to understand $F_{a;x,y}$ near $\overline{\gamma}_{xy}$. From above, we already know the value of $F_{a;x,y}$ on $\overline{\gamma}_{xy}$, so we consider a tubular neighborhood of some small radius $\eps>0$ and the change of $F_{a;x,y}$ as we move away from $\overline{\gamma}_{xy}$ along a geodesic perpendicular to $\gamma_{xy}$. We identify this tubular neighborhood with the normal bundle $(r,\rho,\phi)\in [0,\bm{d}(x,y)]\times [0,\eps)\times \bS^{n-2}$ via the exponential map on fibers, where we've put polar coordinates on the normal fibers. We use $\pi(z)$ for the projection of $z$ onto the base point of the fiber, namely the point on $\overline{\gamma}_{xy}$ which $z$ is over. Let $v_1,\ldots, v_{n-1}$ be a smooth orthonormal frame tangent to the normal fibers. Then $\nabla_{v_i} \bm{d}(x,z)|_{\rho=0}=0$ for all $i$, because $v_i$ is perpendicular to $\gamma_{xy}$. Further, it is a standard computation, starting from the explicit expression for the metric on $\bS^n(1/\sqrt{K})$ in polar coordinates, that
\[
\frac{1}{2}\Hess_{v_i,v_i} \lp \bm{d}^2(x,z)\rp|_{\rho=0}= \bm{d}(x,z)\sqrt{K}\cot\lp \bm{d}(x,z)\sqrt{K}\rp \quad \text{for $r\in [0,\pi/\sqrt{K})$ and any $i$,}
\]
where $x$ is fixed and the Hessian acts on $z$. Since $r$ is the distance from $x$ when $\rho=0$, by definition, if we write $z$ in normal bundle coordinates as $(r,\rho,\phi)$, expanding in $\rho$ gives that
\[
\bm{d}^2(x,z) = r^2 + r\sqrt{K}\cot\lp r\sqrt{K}\rp \rho^2 +O(\rho^3) 
\]
for $z$ in the tubular neighborhood. Note that by rotational symmetry, $\bm{d}(x,z)$ doesn't depend on $\phi$, so it's enough to consider $r$ and $\rho$. Also, we have that $x\cot x \geq \pi/2$ for $x\in[0,\pi/4]$, and since we consider only $z$ in the tubular neighborhood, which means that $r\sqrt{K}\leq\pi/4$, we have that
\[
\bm{d}^2(x,z) = r^2 + \frac{\pi}{2} \rho^2 +O(\rho^3) 
\]
for all $z=(r,\rho,\phi)$ in the tubular neighborhood.

Of course, the role of $x$ and $y$ is symmetric, so we have the same estimate for $\bm{d}^2(y,z)$. More precisely, for $z$ in the tubular neighborhood, we let $r'$ be the distance from $y$ to $\pi(z)$, and we have
\[
\bm{d}^2(y,z) = (r')^2 + \frac{\pi}{2} \rho^2 +O(\rho^3) .
\]
Note that $r'=\bm{d}(x,y)-r$. Because $\bm{d}^2(x,\cdot)$ and $\bm{d}^2(y,\cdot)$ are both smooth on the closure of our tubular neighborhood, the $O(\rho^3)$ term is uniformly controlled, and thus we have that, if $\eps$ is small enough
\begin{equation}\label{Eqn:Step2WithR}
(1-a)\bm{d}^2(x,z)+a\bm{d}^2(z,y)\geq (1-a)r^2 +a\lp \bm{d}(x,y)-r\rp^2+\frac{\pi}{3} \rho^2 
\end{equation}
for all $r\in[0,\bm{d}(x,y)]$, $a\in [0,1]$, and $\rho\in[0,\eps]$. From Step 1, we know that 
\[
(1-a)r^2 +a\lp \bm{d}(x,y)-r\rp^2 -a(1-a)\bm{d}^2(x,y) = F_{a;x,y}\lp \pi(z)\rp  = \bm{d}^2\lp \gamma_{xy}(a),\pi(z)\rp ,
\]
and so \eqref{Eqn:Step2WithR} implies that $F_{a;x,y}(z) \geq \bm{d}^2\lp \gamma_{xy}(a),\pi(z)\rp +\frac{\pi}{3} \rho^2 $.

Finally, note that, by continuity and the fact that $x$ any $y$ are well away from the cut locus, we can allow $\pi(z)$ to be a point on $\overline{\gamma}_{xy}$ extended by $\eps$ in either direction, at the cost of possibly shrinking $\eps$ and reducing the $\frac{\pi}{3}$. This gives a type of ``tubular $\eps$-neighborhood;'' namely, we let $T_{\eps}\lp\gamma_{xy}\rp$ be given by $(r,\rho,\phi)\in (-\eps,\bm{d}(x,y)+\eps) \times [0,\eps)\times \bS^{n-2}$. Since the exact value of our positive constants doesn't matter to us here, we summarize this by stating that there are $\eps>0$ and $C>0$, depending only on $K>0$, such that for any $x,y\in \bS^n(1/\sqrt{K})$ with $\bm{d}(x,y)\leq \frac{\pi}{4\sqrt{K}}$, $T_{\eps}\lp\gamma_{xy}\rp$ admits smooth exponential coordinates as above and 
\[
F_{a;x,y}(z) \geq \bm{d}^2\lp \gamma_{xy}(a),\pi(z)\rp +C \rho^2 
\]
for all $r\in(-\eps,\bm{d}(x,y)+\eps)$, $a\in [0,1]$, and $\rho\in[0,\eps)$, where $z=(r,\rho,\phi)$ in these exponential coordinates. Also note that  $T_{\eps}\lp\gamma_{xy}\rp$ has the virtue that it contains the ball $B_{\eps}(\gamma_{xy}(t))$ for any $t\in[0,1]$. This is the estimate we need for comparison spheres.

\emph{Step 4}: We now consider points $z$ near $\overline{\gamma}_{xy}$ on a general $M$ as in the lemma. We let $K>0$ be an upper bound on the sectional curvature of $M$ (so if $M$ has non-positive curvature, we take, say $K=1$ for convenience, similar to the way we treated $D$ at the beginning of the proof). We also assume that $D>0$ is less than $\frac{\pi}{4\sqrt{K}}$ and less than half the injectivity radius of $M$. Then for small enough $\eps$ (depending only on $K$ and $D$), for any $x$ and $y$ with $\bm{d}(x,y)<D$, the tubular $\eps$-neighborhood $T_{\eps}\lp\gamma_{xy}\rp$ can be defined just as in the previous step, so that it admits exponential coordinates with $\rho$ again giving the distance from the extension of $\overline{\gamma}_{xy}$. In what follows, we abbreviate $T_{\eps}\lp\gamma_{xy}\rp$ as $T_{\eps}$.

The point is that we have hinge comparison with respect to $\bS^n(1/\sqrt{K})$ on $T_{\eps}$. In particular, consider any $z$ in $T_{\eps}$, with associated projection $\pi(z)$ and coordinate $\rho$. Then $\bm{d}(x,z)$ is the length of the hypotenuse of a right triangle with legs of length $\bm{d}(x,\pi(z))$ and $\rho$. We have chosen $T_{\eps}$ small enough so that we have comparison for an upper curvature bound; see \cite[Theorem 4.1]{Karcher}. Thus $\bm{d}(x,z)$ is greater than or equal to the length of the hypotenuse of the comparison hinge in $\bS^n(1/\sqrt{K})$. This is what was bounded in the previous step. Moreover, we can apply the same argument to bound $\bm{d}(y,z)$ from below. In light of the definition of $F_{a;x,y}(z)$ and the results of Step 3, we therefore have that there are 
$\eps>0$ and $C>0$, depending only on $K$ and $D$ as above, such that for any $x,y\in M$ with $\bm{d}(x,y)\leq D$, $T_{\eps}$ admits smooth exponential coordinates and 
\[
F_{a;x,y}(z) \geq \bm{d}^2\lp \gamma_{xy}(a),\pi(z)\rp +C \rho^2 
\]
for all $z\in T_{\eps}$ and $a\in [0,1]$.

We now need to bound $\bm{d}^2\lp \gamma_{xy}(a),z \rp$ from above for $z\in T_{\eps}$. Because we avoid assuming a lower curvature bound, we can't do this via comparison hinges. Instead, if we let $u=\bm{d}\lp \gamma_{xy}(a),\pi(z)\rp$ for ease of notation, we note that the triangle inequality gives that $\bm{d}\lp \gamma_{xy}(a),z \rp \leq u+\rho$. With this notation, we have that, for any $\zeta>0$,
\[
F_{a;x,y}(z) - \zeta \bm{d}^2\lp \gamma_{xy}(a),z \rp  \geq u^2 +C\rho^2 -\zeta(u+\rho)^2 .
\]
Since $(u+p)^2 \leq 4\max\{u^2,\rho^2\}$, we can find $\zeta>0$ depending only on $C$ that makes the right-hand side of the above non-negative. But that is equivalent to the inequality we are trying to prove. We summarize this by observing that there exist $\eps>0$ and $\zeta_c>0$, depending only on $K$ and $D$ as above, such that for any $x,y\in M$ with $\bm{d}(x,y)\leq D$,
\[
F_{a;x,y}(z) \geq \zeta_c \bm{d}^2\lp \gamma_{xy}(a),z \rp 
\]
for all $z\in T_{\eps}$ and $a\in [0,1]$.

\emph{Step 5}: We can now finish the proof. As in the previous step, we have $K$ and $D$ depending only on the upper curvature bound and the injectivity radius of $M$. Then the previous step says we can find $\zeta_c>0$ and $\eps>0$ depending only on $K$ and $D$ such that, if $\bm{d}(x,y)<D$, then 
\[
F_{a;x,y}(z) \geq \zeta_c \bm{d}^2\lp \gamma_{xy}(a),z \rp 
\]
for all $z\in T_{\eps}$ and $a\in [0,1]$. For this same $\eps$, Step 2 implies we can find $D'>0$ and $\zeta_f\in (0,1)$
depending only on $\eps$ and the injectivity radius of $M$, such that for any $x$ and $y$ with $\bm{d}(x,y)<D$,
\[
F_{a;x,y}(z) \geq \zeta_f \bm{d}^2\lp \gamma_{xy}(a),z \rp
\]
whenever $\bm{d}\lp \gamma_{xy}(a),z \rp \geq \eps$, for any $a\in[0,1]$.

As noted above, the definition of $T_{\eps}$ implies that any $z\not\in T_{\eps}$ satisfies $\bm{d}\lp \gamma_{xy}(a),z \rp \geq \eps$, and thus all $z\in M$ satisfy one of these two conditions. If $D'<D$, we can replace $D$ with $D'$, and we can take $\zeta=\min\{\zeta_c,\zeta_f\}$. This produces a $D>0$ and a $\zeta\in(0,1)$ that, looking back at the dependencies just described, depend only on the upper curvature bound and the injectivity radius of $M$, such that if $\bm{d}(x,y)<D$,
\[
F_{a;x,y}(z) \geq \zeta \bm{d}^2\lp \gamma_{xy}(a),z \rp
\]
for all $z\in M$ and any $a\in[0,1]$. We also recall $D$ will be less than the injectivity radius, as it has been throughout the proof, so that $\gamma_{xy}$ and $\gamma_{xy}(a)$ are well defined.

\end{proof}

\begin{remark}\label{Rem:ImprovingZeta}
As we see from the proof, considering points on $\gamma_{xy}$ shows that there is no $M$ for which $\zeta$ can be taken to be bigger than 1, no matter how small $D$ is.

On the other hand, when $M=\bR^d$, direct calculation, just with the Pythagorean theorem, shows that
\[
F= \bm{d}^2\lp \gamma_{xy}(a),z\rp
\] 
for all $x$, $y$, and $z$ in $\bR^d$ and all $a\in[0,1]$. In particular, for $\bR^d$, the lemma holds with $\zeta=1$ and $D=\infty$, and this is sharp, even at individual points. As this indicates, not only could one be more explicit about $\zeta$ and $D$, as functions of the injectivity radius and curvature bound, in Lemma \ref{lem: F concentration no cutlocus}, but one expects a better version of Lemma \ref{lem: F concentration no cutlocus} for various classes of manifolds, such as manifolds with lower curvature bounds or Cartan-Hadamard manifolds. Nonetheless, that degree of precision is not needed for our purposes here, and we don't pursue it.
\end{remark}

\subsection{Some Useful Integral Estimates}\label{sec: useful int est}

In this section, we establish several key integral estimates that will be needed later.
Recall that the density of a Brownian bridge admits the representation
\[
P_{t,x,y}(s,z)
=\frac{P_s(x,z)P_{t-s}(z,y)}{P_t(x,y)}.
\]
For any fixed $\epsilon>0$, and for our purposes, the following quantity effectively
captures the behavior of $P_{t,x,y}(s,z)$ for $0<t\leq 1$:
\begin{equation}\label{def: quotients of different UB}
G^\epsilon_{t,x,y}(s,z)
:=\frac{
G^\epsilon_{s}(d(x,z))\,G^\epsilon_{t-s}(d(z,y))
}{
\1_{d(x,y)< D}\,G^\epsilon_t(d(x,y))
+\1_{d(x,y)\geq D}\,\tilde{G}^\epsilon_t(d(x,y))
}.
\end{equation}
Here $D>0$ is taken from the statement of Lemma \ref{lem: F concentration no cutlocus}. 

\medskip

As can be seen from \eqref{def: quotients of different UB}, we employ a sharper
heat kernel estimate in the denominator when the points $x$ and $y$ are far
apart. This refinement allows us to capture more accurately the small-time
behavior of the Brownian bridge density $P_{t,x,y}(s,z)$ as $t\downarrow 0$.
In this regime, there may exist infinitely many distance-minimizing geodesics
connecting $x$ and $y$, and the Brownian bridge measure can concentrate on a
submanifold of positive dimension.

\medskip

This distinction is crucial for establishing estimates such as
\eqref{est: bridge density} and \eqref{est: bridge density with riesz} below.
It constitutes a key conceptual difference between the present work and
\cite{CO25}, as well as earlier studies on the well-posedness of
\eqref{eq: SHE} with measure-valued initial data.

\medskip
Since we must distinguish between the cases where $x$ and $y$ are close and where they
are far apart, it is essential-- particularly for the iteration procedure described at
the beginning of Section~\ref{Sec: proof of thm 1}--to unify the resulting bounds.
To this end, we make use of the comparison estimates
\eqref{est: LY beats LY larger eps}--\eqref{est: LY beat Hsu larger eps},
which allow us to absorb both regimes into a single estimate.
This is the reason for introducing the additional tuning parameter $\epsilon$ in the
definition of $G^\epsilon_{t,x,y}$.

\medskip
The following integral estimates will be useful for proving Theorem \ref{thm: L1 bd}.
\begin{lemma}\label{lem: single integral bounds} Fix any $\epsilon>0$. Suppose  $0<a<\frac{1}{2}$ and $0<\alpha<\frac{d}{2}$. The following inequalities hold for any $t>0$ and $0<\epsilon'\leq\epsilon$, with the constants on the right hand side possibly depending on $\epsilon$ if the left hand side involves $\epsilon'$.
\begin{equation}\label{est:1pt Gauss est}
     {  \sup_{x\in M}} \int_{M} G^{\epsilon'}_t(d(x,y)) m(dy)\leq C,
    \end{equation}
    \begin{equation}\label{est: Riesz}
    { \sup_{x\in M}}   \int_{M} d(x,y)^{2\alpha-d} m(dy)\leq C,
    \end{equation}
    \begin{equation}\label{est: pt gauss riesz 1 int}
     { \sup_{x,x'\in M} } \int_{M} G^{\epsilon'}_t(d(x,y)) d(x',y)^{2\alpha-d} m(dy)\leq C(t^{\frac{2\alpha-d}{2}}+1).
    \end{equation}
    In addition, for all $0<t\leq 1$ and {all $a\in(0,\frac{1}{2})$},
    \begin{equation}\label{est: bridge density}
    {\sup_{x,x'\in M}}    \int_{M} G^{\epsilon'}_{t,x,x'}(at,y) m(dy)\leq C,
    \end{equation}
    \begin{equation}\label{est: bridge density with riesz}
    {\sup_{x,x',x_0\in M}}  \int_{M} {G^{\epsilon'}_{t,x,x'}(at,y)}d(y,x_0)^{2\alpha-d} m(dy)\leq C\left[\left(a(1-a)t\right)^{\frac{2\alpha-d}{2}}+1\right].
    \end{equation}
\end{lemma}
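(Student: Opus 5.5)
Our plan is to reduce all five estimates to one-dimensional radial integrals, and then use Lemmas \ref{lem: F concentration} and \ref{lem: F concentration no cutlocus} to control the exponent of the bridge quotient. The basic geometric input is a volume bound on the compact manifold $M$. Because the Ricci curvature is bounded below, Bishop--Gromov comparison gives $m(B(x,r))\le Cr^d$ uniformly in $x$ and in $r\le D_M$. Hence, for any nonnegative $f$ and any center $p\in M$, integrating over the shells $\{k\delta\le \bm{d}(p,y)<(k+1)\delta\}$ yields
\[
\int_M f(\bm{d}(p,y))\,m(dy)\le C\int_0^{D_M} f(r)\,r^{d-1}\,dr
\]
up to harmless modifications when $f$ is not monotone (we may use a monotone envelope, e.g.\ $f(r)=g((r-R)^2)$ bounded by its shell suprema).

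\emph{The first three estimates.} Estimate \eqref{est:1pt Gauss est} then follows from $\int_0^\infty t^{-d/2}e^{-r^2/(c_{\epsilon'}t)}r^{d-1}dr=C$, and \eqref{est: Riesz} from $2\alpha-d>-d$. For \eqref{est: pt gauss riesz 1 int} we split $M$ at $\bm{d}(x',y)=\sqrt t$. On the far part we use $\bm{d}(x',y)^{2\alpha-d}\le t^{(2\alpha-d)/2}$ together with \eqref{est:1pt Gauss est}. On $B(x',\sqrt t)$ we use $G^{\epsilon'}_t\le t^{-d/2}$ together with $\int_{B(x',\sqrt t)}\bm{d}(x',y)^{2\alpha-d}m(dy)\le Ct^{\alpha}$. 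When $t\ge1$ we simply use $G\le1$ and \eqref{est: Riesz}, which accounts for the ``$+1$''.

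\emph{The bridge quotient.} For \eqref{est: bridge density} we write $s=at$. Since the same constant $c_{\epsilon'}$ appears in the numerator and the denominator, the exponential part of $G^{\epsilon'}_{t,x,x'}(s,y)$ equals
\[
\exp\!\big(-2F_{s,t;x,x'}(y)/c_{\epsilon'}\big)=\exp\!\Big(-\frac{2F_{a;x,x'}(y)}{c_{\epsilon'}\,a(1-a)t}\Big).
\]
\textbf{Close case}, $\bm{d}(x,x')<D$: the prefactor is $(a(1-a)t)^{-d/2}$. Lemma \ref{lem: F concentration no cutlocus} gives $F_a\ge\zeta\,\bm{d}^2(\gamma_{xx'}(a),y)$, so the integrand is a Gaussian of variance $\asymp a(1-a)t$ centered at $\gamma_{xx'}(a)$, and the radial bound gives a constant.

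\textbf{Far case}, $\bm{d}(x,x')\ge D$: this is the main obstacle, because concentration occurs on a shell rather than at a point. The prefactor is now $(a(1-a))^{-d/2}t^{-1/2}$, since the denominator carries the extra factor $t^{-(d-1)/2}$. Lemma \ref{lem: F concentration} gives $F_a\ge(r-R)^2$ with $r=\bm{d}(x,y)$ and $R=a\bm{d}(x,x')\in[aD,aD_M]$. The radial bound then gives, using $1-a\ge\tfrac12$,
\[
\int_0^{D_M}e^{-c(r-R)^2/(at)}r^{d-1}dr\le C(at)^{1/2}\big(R^{d-1}+(at)^{(d-1)/2}\big).
\]
Multiplying by the prefactor produces $C\,a^{(d-1)/2}D_M^{d-1}+Ct^{(d-1)/2}$, which is bounded for $t\le1$. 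The delicate point to check is that the factor $t^{-(d-1)/2}$ from Lemma \ref{lem: heat kernel bd} exactly compensates the $(d-1)$-dimensional spread of the shell of radius $R$. This step needs $R$ to be comparable to $a$; that is why $\bm{d}(x,x')\le D_M$ is used there.

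\emph{The weighted bridge estimate.} For \eqref{est: bridge density with riesz} we split $M$ at $\bm{d}(y,x_0)=\sqrt{a(1-a)t}$. Away from $x_0$ we bound the Riesz factor by $(a(1-a)t)^{(2\alpha-d)/2}$ and apply \eqref{est: bridge density}. Near $x_0$ we use the pointwise sup of the bridge quotient: $C(a(1-a)t)^{-d/2}$ in the close case and $Ca^{-d/2}t^{-1/2}$ in the far case. We multiply this by $\int_{B(x_0,\sqrt{a(1-a)t})}\bm{d}^{2\alpha-d}\le C(a(1-a)t)^{\alpha}$. In the far case the resulting bound exceeds $(at)^{\alpha-d/2}$ only by a factor $t^{(d-1)/2}\le1$. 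This yields the claimed bound $C[(a(1-a)t)^{(2\alpha-d)/2}+1]$.
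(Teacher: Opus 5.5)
Your plan works, and for \eqref{est:1pt Gauss est}, \eqref{est: Riesz} and \eqref{est: bridge density} it is the paper's argument. In the close case Lemma~\ref{lem: F concentration no cutlocus} gives a Gaussian centred at $\gamma_{xx'}(a)$. In the far case you use Lemma~\ref{lem: F concentration} and a radial integral, and the factor $t^{(d-1)/2}$ coming from $\tilde G$ absorbs the $(d-1)$-dimensional spread of the shell $\{\bm{d}(x,y)\approx a\,\bm{d}(x,x')\}$. (The exponent is $-F_{a;x,x'}(y)/(c_{\epsilon'}a(1-a)t)$, without your factor $2$; this is harmless.)

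One step needs a different justification. The radial comparison $\int_M f(\bm{d}(p,y))\,m(dy)\le C\int_0^{D_M}f(r)r^{d-1}\,dr$ does not follow from $m(B(p,r))\le Cr^d$ when $f$ is not monotone. The far-case integrand $f(r)=e^{-c(r-R)^2/(at)}$, with $R=a\,\bm{d}(x,x')$, is exactly such a function.
\begin{itemize}
\item A ball bound controls the annulus $\{r_1\le \bm{d}(p,\cdot)<r_2\}$ only by $Cr_2^d$, not by $C(r_2^d-r_1^d)$. So the shell-suprema argument does not close.
\item Your decreasing envelope equals $1$ on $[0,R]$. It therefore puts mass of order $R^d$ where you need $\sqrt{at}\,R^{d-1}$. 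After multiplying by the prefactor $a^{-d/2}t^{-1/2}$ you are left with something of order $a^{d/2}t^{-1/2}$, which blows up as $t\downarrow0$.
\end{itemize}
What you need is the pointwise bound on the volume density in geodesic polar coordinates, $\exp_p^*m\le C\,dy$ on the injectivity domain of $p$. This is Bishop's comparison under the lower curvature bound, and it is the third fact the paper records about $\overline{M_x}$. With it, the radial inequality holds for every nonnegative $f$, and your far-case computation goes through as written.

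You genuinely depart from the paper in \eqref{est: pt gauss riesz 1 int} and \eqref{est: bridge density with riesz}.

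For \eqref{est: pt gauss riesz 1 int}, the paper splits $M$ into $\{\bm{d}(x,y)\le\bm{d}(x',y)\}$ and its complement. It then uses that both factors are decreasing to reduce to $x=x'$ and evaluates one explicit Euclidean integral. Your split at $\bm{d}(x',y)=\sqrt t$ avoids that rearrangement, is more elementary, and is equally valid.

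For \eqref{est: bridge density with riesz}, the paper splits according to whether $|\bm{d}(x,y)-R|\le\bm{d}(x_0,y)$. On that piece it replaces $\bm{d}(y,x_0)^{2\alpha-d}$ by $|\bm{d}(x,y)-R|^{2\alpha-d}$. This trades a point singularity for a singularity along the hypersurface $\{\bm{d}(x,\cdot)=R\}$. The resulting integral $\int|r-R|^{2\alpha-d}r^{d-1}\,dr$ converges only when $\alpha>(d-1)/2$.

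Your split at $\bm{d}(y,x_0)=\sqrt{a(1-a)t}$ uses only three things: the pointwise bound on the bridge quotient that follows from $F_{a;x,x'}\ge0$, local integrability of $\bm{d}(\cdot,x_0)^{2\alpha-d}$, and the already-proved \eqref{est: bridge density}. It therefore works for all $0<\alpha<d/2$, and even without the $+1$. In particular it covers the part $(d-2)/2<\alpha\le(d-1)/2$ of the Dalang range, which the paper's argument as written does not reach.
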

\begin{proof}
    We fix $\delta=\frac{i_M}{16}$.
        In the rest of the proof, we will use $$\int_M G^{\epsilon'}_t(\delta) m(dy)= G^{\epsilon'}_t(\delta)m(M)\leq G_t^{\epsilon}(\delta)m(M)\leq C_{\epsilon,M}$$
    without reference. For \eqref{est:1pt Gauss est}, simply decompose $M=B(x,\delta)\sqcup B(x,\delta)^c$ and note that for $y\in B(x,\delta)^c$ we have $d(x,y)>\delta$, which gives us \begin{align*}
        &\int_M G^{\epsilon'}_t(d(x,y))m(dy)= \left(\int_{B(x,\delta)}+\int_{B(x,\delta)^c}\right)G^{\epsilon'}_t(d(x,y))m(dy)\\
        \leq& C\left(\int_{B^{\R^d}(\delta)} G^{\epsilon'}_t(|y|) dy+\int_M G^{\epsilon'}_t(\delta) m(dy)\right)\\
        &\leq C[(2+\epsilon')^{\frac{d}{2}}+C_{\epsilon,M}]\leq C_{\epsilon,M}.
    \end{align*}
    For \eqref{est: Riesz}, only elementary arguments are needed after applying the same decomposition.\\
    For \eqref{est: pt gauss riesz 1 int}, we will first show \[\int_{M} G^{\epsilon'}_t(d(x,y)) d(x',y)^{2\alpha-d} m(dy)\leq 2\sup_{z\in M}\int_{M} G^{\epsilon'}_t(d(z,y)) d(z,y)^{2\alpha-d} m(dy).\]
    For $x=x'$, it is obvious, so it remains to address $x\neq x'$.
    Observe that $M=M_x\cup M_{x'}$, where $M_x:=\set{y\in M: d(x,y)\leq d(x',y)}$ and $M_{x'}$ is defined analogously. They obviously have measure 0 intersection
        and the fact that they are closed follows from the continuity of $f_{x,x'}(y):=d(x,y)-d(x',y)$. Since $e^{-\frac{r^2}{c_\epsilon t}}$ and $r^{2\alpha-d}$ are both decreasing in $r$ for $r>0$, we have \begin{align*}
        &\int_{M} G^{\epsilon'}_t(d(x,y)) d(x',y)^{2\alpha-d} m(dy)=\left(\int_{M_x}+\int_{M_{x'}}\right)G^{\epsilon'}_t(d(x,y)) d(x',y)^{2\alpha-d} m(dy)\\
        \leq& \int_{M_x} G^{\epsilon'}_t(d(x,y)) d(x,y)^{2\alpha-d} m(dy)+ \int_{M_{x'}}G^{\epsilon'}_t(d(x',y)) d(x',y)^{2\alpha-d} m(dy)\\
        \leq& 2\sup_{z\in M}\int_M G^{\epsilon'}_t(d(z,y)) d(z,y)^{2\alpha-d} m(dy),\qquad \text{as desired}.
    \end{align*}
    Now for any $z\in M$, we again use $M=B(z,\delta)\cup B(z,\delta)^c$  and thus \begin{align*}
        &\int_M G^{\epsilon'}_t(d(z,y)) d(z,y)^{2\alpha-d} m(dy)\\
        \leq &\int_{B(z,\delta)} G^{\epsilon'}_t(d(z,y)) d(z,y)^{2\alpha-d} m(dy)+\int_{B(z,\delta)^c} G^{\epsilon'}_t(\delta) \delta^{2\alpha-d} m(dy)
        \\
        \leq &C\int_{\R^d} G^{\epsilon'}_t(|y|) |y|^{2\alpha-d} dy+C_{\epsilon,M}\\
        =&C(c_{\epsilon'})^{\alpha-1}t^{\frac{2\alpha-d}{2}}+C_{\epsilon,M}\leq C_{\epsilon,\alpha,M}(t^{\frac{2\alpha-d}{2}}+1),\qquad \text{as desired}.
    \end{align*}
    For \eqref{est: bridge density} and \eqref{est: bridge density with riesz}, the nature of the denominator in $G^{\epsilon'}_{t,x,x'}(at,y)$ requires us to deal with the cases $d(x,x')<D$ and $d(x,x')\geq D$ separately. The case $d(x,x')<D$ is easier, so we will treat it first. By Lemma \ref{lem: F concentration no cutlocus}, we have in this case $$G^{\epsilon'}_{t,x,x'}(at,y)\leq G^{\epsilon'}_{a(1-a)t/\zeta}(y,\gamma_{xx'}(a)).$$
    This gives us \begin{align*}
        \int_M G^{\epsilon'}_{t,x,x'}(at,y)m(dy)\leq&\int_M G^{\epsilon'}_{a(1-a)t/\zeta}(y,\gamma_{xx'}(a))m(dy) \\
       \text{and}\quad \int_M G^{\epsilon'}_{t,x,x'}(at,y)d(y,x_0)^{2\alpha-d}m(dy)\leq& \int_M G^{\epsilon'}_{a(1-a)t/\zeta}(y,\gamma_{xx'}(a))d(y,x_0)^{2\alpha-d}m(dy).
    \end{align*}
    Since $\gamma_{xx'}(a)$ is simply a point in $M$, \eqref{est: bridge density} follows from from the first right hand side above and \eqref{est:1pt Gauss est}, and \eqref{est: bridge density with riesz} likewise follows from the second right hand side above and \eqref{est: pt gauss riesz 1 int}.
    \\
    In the case $d(x,x')\geq D$, we have by Lemma \ref{lem: F concentration}$$G^{\epsilon'}_{t,x,x'}(s,y)=t^{\frac{d-1}{2}}(a(1-a)t)^{-\frac{d}{2}}\exp\left(-\frac{F_{a;x,x'}(y)}{c_{\epsilon'}a(1-a)t}\right)\leq t^{\frac{d-1}{2}}(a(1-a)t)^{-\frac{d}{2}}\exp\left(-\frac{|d(x,y)-ad(x,x')|^2}{c_{\epsilon'}a(1-a)t}\right).$$ 
    Before proceeding further, we will introduce some useful notation. For $x\in M$ denote by $\overline{M_x}$ the subset of $T_xM$ determined as the largest open subset for which the corresponding geodesic rays are unique, non-conjugate minimizers. Equivalently, it is given by all geodesic rays prior to their cut times. Unfortunately, there is no broad agreement on the name for such a fundamental object, but \cite[Chapter 10]{leeRmMfd}, which we take as our basic reference for what follows, calls it the injectivity domain of $x$. In particular, it is shown there that $\overline{M_x}$ is well defined, and \cite[Theorem 10.34]{leeRmMfd} gives the basic properties
    \begin{enumerate}
        \item $0\in \overline{M_x}$. (Recall that $\exp_x(0)=x$).
        \item $\exp_x:\overline{M_x}\to M$ is a diffeomorphism onto its image.
    \end{enumerate}
    Then the facts \begin{enumerate}
        \item $\exp_x(\overline{M_x})=M\setminus \mathrm{Cut}_x$. Note that this implies $\int_M m(dy)=\int_{\overline{M_x}}\exp^*_xm(dy)$, where $\exp^*_xm$ denotes the pullback of $m$ by $\exp_x$.
        \item $\overline{M_x}\subset B^{T_xM}(0,D_M)$, where $B^{T_xM}(0,r)$ is the Euclidean ball of radius $r$ on $T_xM\cong \R^d$ centered at $0=\exp^{-1}_x(M)\cap\overline{M_x}$.
        \item As measures, $\exp^*m(dy)|_{\overline{M_x}}\leq C dy$, where $dy$ is the Lebesgue measure on $T_xM\cong \R^d$.
    \end{enumerate}
    follow from this along with the metric comparison result \cite[Theorem 11.10]{leeRmMfd} and the fact that $M$ is compact, so that all sectional curvatures are bounded from below and $\overline{M_x}$ is bounded. The above three facts imply that for \eqref{est: bridge density} we have \begin{align*}
        \int_M G^{\epsilon'}_{t,x,x'}(at,y)m(dy)\leq& t^{\frac{d-1}{2}}\int_M (a(1-a)t)^{-\frac{d}{2}}\exp\left(-\frac{|d(x,y)-ad(x,x')|^2}{c_{\epsilon'}a(1-a)t}\right) m(dy)\\
        =& t^{\frac{d-1}{2}}\int_{\overline{M_x}} (a(1-a)t)^{-\frac{d}{2}}\exp\left(-\frac{{||\exp_{x}^{-1}(y)||}_{T_x(M)}-ad(x,x')|^2}{c_{\epsilon'}a(1-a)t}\right) \exp_x^*m(dy)\\
        \leq& t^{\frac{d-1}{2}} \int_{B^{\R^d}(0,D_M)} (a(1-a)t)^{-\frac{d}{2}}\exp\left(-\frac{|{|y|_{\R^d}}-ad(x,x')|^2}{c_{\epsilon'}a(1-a)t}\right)  dy\\
        =&t^{\frac{d-1}{2}} \int_{\S^{d-1}}\int_0^{D_M} (a(1-a)t)^{-\frac{d}{2}}\exp\left(-\frac{|r-ad(x,x')|^2}{c_{\epsilon'}a(1-a)t}\right)r^{d-1} drd\theta\\
        =& Ct^{\frac{d-1}{2}} \int_0^{D_M} (a(1-a)t)^{-\frac{d}{2}}\exp\left(-\frac{|r-ad(x,x')|^2}{c_{\epsilon'}a(1-a)t}\right)r^{d-1} dr\\
        =&Ct^{\frac{d-1}{2}} \left(\int_0^{ad(x,x')}+\int_{ad(x,x')}^{D_M}\right) (a(1-a)t)^{-\frac{d}{2}}\exp\left(-\frac{|r-ad(x,x')|^2}{c_{\epsilon'}a(1-a)t}\right)r^{d-1} dr.
    \end{align*}
    Denote the last two integrals above by $\int_0^{ad(x,x')}=:I_<$ and $\int_{ad(x,x')}^{D_M}=:I_>$. We can conclude the proof of \eqref{est: bridge density} if we can show that $t^{\frac{d-1}{2}}I_<$ and $ t^{\frac{d-1}{2}}I_>$ are both less than or equal to $C_{\epsilon,M}$. We will show $t^{\frac{d-1}{2}}I_>\leq C$, the argument for $I_<$ is nearly identical. Note that on $[ad(x,x'),D_M]$, we have $|r-ad(x,x')|=r-ad(x,x')$, so the change of variable $R=\frac{r-ad(x,x')}{(c_{\epsilon'}a(1-a)t)^{\frac{1}{2}}}$ gives us 
    \begin{align*}
        t^{\frac{d-1}{2}}I_>\leq&t^{\frac{d-1}{2}}(c_{\epsilon'})^{\frac{d-1}{2}}{\int_0^{+\infty}} \exp(-R^2) \left(R+\sqrt{\frac{a}{c_{\epsilon'}(1-a)t}}d(x,x')\right)^{d-1} dR\\
        \leq& t^{\frac{d-1}{2}}(c_{\epsilon'})^{\frac{d-1}{2}}\int_0^{+\infty} \exp(-R^2) \left(R+\sqrt{\frac{1}{c_{\epsilon'}t}}d(x,x')\right)^{d-1} dR\\
        \leq& (t^{\frac{d-1}{2}}\wedge1) (c_{\epsilon'})^{\frac{d-1}{2}}\leq (c_{\epsilon})^{\frac{d-1}{2}} = C_{\epsilon,M}.
    \end{align*}
    Finally, we will prove \eqref{est: bridge density with riesz} by combining the strategies used for proving \eqref{est: pt gauss riesz 1 int} and \eqref{est: bridge density}. We can decompose $M$ as $M=M_{x,x'}\cup M_{x_0}$, where
    \[\begin{split}
    M_{x,x'}:&=\set{y\in M:|d(x,y)-ad(x,x'))|\leq d(x_0,y)} \\ \text{and}\quad M_{x_0}:&=\set{y\in M:|d(x,y)-ad(x,x'))|\geq d(x_0,y)}.
    \end{split}\]
    As with $M_x,M_{x'}$ used in the proof of \eqref{est: pt gauss riesz 1 int}, these sets have measure 0 intersection and their closedness (and therefore $m-$measurability) of $M_{x,x'}$ and $M_{x_0}$ follows from the continuity of $f_{x,x',x_0}(y):=|d(x,y)-ad(x,x'))|-d(x_0,y)$. Applying Lemma \ref{lem: F concentration}, we have \begin{align*}
        &\int_M G^{\epsilon'}_{t,x,x'}(at,y)d(y,x_0)^{2\alpha-d}m(dy)\\
        \leq&t^{\frac{d-1}{2}}\int_M (a(1-a)t)^{-\frac{d}{2}}\exp\left(-\frac{|d(x,y)-ad(x,x')|^2}{c_{\epsilon'}a(1-a)t}\right) d(y,x_0)^{2\alpha-d}m(dy)\\
        =&t^{\frac{d-1}{2}}\left(\int_{M_{x,x'}}+\int_{M_{x_0}}\right) (a(1-a)t)^{-\frac{d}{2}}\exp\left(-\frac{|d(x,y)-ad(x,x')|^2}{c_{\epsilon'}a(1-a)t}\right) d(y,x_0)^{2\alpha-d}m(dy)\\
        \leq& t^{\frac{d-1}{2}} \int_{M_{x,x'}} (a(1-a)t)^{-\frac{d}{2}}\exp\left(-\frac{|d(x,y)-ad(x,x')|^2}{c_{\epsilon'}a(1-a)t}\right) |d(x,y)-ad(x,x')|^{2\alpha-d} m(dy)\\
        &+t^{\frac{d-1}{2}}\int_{M_{x_0}} (a(1-a)t)^{-\frac{d}{2}}\exp\left(-\frac{d(y,x_0)^2}{c_{\epsilon'}a(1-a)t}\right) d(y,x_0)^{2\alpha-d}m(dy)=I_{x,x'}+I_{x_0}.
    \end{align*}
    Since $t^{\frac{d-1}{2}}$ is bounded for $t\in (0,1)$, $I_{x_0}\leq C_{\epsilon,M}[(a(1-a)t)^{\frac{2\alpha-d}{2}}+1]$ follows from applying \eqref{est: pt gauss riesz 1 int}. We now only need to show \[\label{est:I_xx' bd}I_{x,x'}\leq C_{\epsilon,M}[(a(1-a)t)^{\frac{2\alpha-d}{2}}+1].\]
    Taking normal coordinates at $x$, we have \begin{align*}
        I_{x,x'}=& t^{\frac{d-1}{2}}\int_{\exp_x^{-1}(M_{x,x'})} (a(1-a)t)^{-\frac{d}{2}}\exp\left(-\frac{||\exp_{x}^{-1}(y)|_{T_x(M)}-ad(x,x')|^2}{c_{\epsilon'}a(1-a)t}\right) \\
         &\times||\exp_{x}^{-1}(y)|_{T_x(M)}-ad(x,x')|^{2\alpha-d}\exp_x^{*}m(dy)\\
        \leq& C t^{\frac{d-1}{2}}\int_{B^{\R^d}(D_M)} (a(1-a)t)^{-\frac{d}{2}}\exp\left(-\frac{||y|_{\R^d}-ad(x,x')|^2}{c_{\epsilon'}a(1-a)t}\right) ||y|_{\R^d}-ad(x,x')|^{2\alpha-d} dy\\
        =&C t^{\frac{d-1}{2}}\int_0^{D_M} (a(1-a)t)^{-\frac{d}{2}}\exp\left(-\frac{|r-ad|^2}{c_{\epsilon'}a(1-a)t}\right) |r-ad(x,x')|^{2\alpha-d}r^{d-1} dr.
    \end{align*}
    From here we can repeat what was done for \eqref{est: bridge density}, with the extra $(a(1-a)t)^{\frac{2\alpha-d}{2}}$ term in the right hand side of the desired estimate emerging from the extra $|r-ad(x,x')|^{2\alpha-d}$ factor in the integrand. This concludes the proof.
\end{proof}

Lemma \ref{lem: single integral bounds} implies the following double space integral estimates.
\begin{lemma}\label{lem: double integral bounds}
    Let the assumptions in Lemma \ref{lem: single integral bounds} prevail. The following estimates hold with the constants on the right hand side possibly depending on $\epsilon$ if the left hand side has dependence on $\epsilon'$.
    \begin{equation}\label{est:2int 1LY Riesz}
    {\sup_{x\in M}}\iint_{M^2} G^{\epsilon'}_t(d(x,y))d(y,y')^{2\alpha-d} m(dy)m(dy')\leq C,\end{equation}
    \begin{equation}\label{est:2int 2LY Riesz}
    {\sup_{x,x'\in M}}\iint_{M^2} G^{\epsilon'}_t(d(x,y))d(y,y')^{2\alpha-d}G^{\epsilon'}_t(d(x',y)) m(dy)m(dy')\leq C(t^{\frac{2\alpha-d}{2}}+1).\end{equation}
    Furthermore, {we have for all $0<t<1$ and  $a\in(0,\frac{1}{2})$,}
    \begin{equation}\label{est:2int 1Bridge Riesz}{\sup_{x,z\in M}}\iint_{M^2}G^{\epsilon'}_{t,x,z}(at,y)d(y,y')^{2\alpha-d}  m(dy)m(dy')\leq C,\end{equation}
    \begin{equation}\label{est:2int 1Bridge 1LY Riesz}
    {\sup_{x,x',z\in M}}\iint_{M^2} G^{\epsilon'}_{t,x,z}(at,y)d(y,y')^{2\alpha-d}G^{\epsilon'}_{a(1-a)t}(d(x',y')) m(dy)m(dy')\leq C[(a(1-a)t)^{\frac{2\alpha-d}{2}}+1],\end{equation}
    \begin{equation}\label{est:2int 2Bridge Riesz}
    {\sup_{x,x',z'\in M}}\iint_{M^2} G^{\epsilon'}_{t,x,z}(at,y)d(y,y')^{2\alpha-d}G^{\epsilon'}_{t,x',z'}(at,y) m(dy)m(dy')\leq C[(a(1-a)t)^{\frac{2\alpha-d}{2}}+1].\end{equation}
\end{lemma}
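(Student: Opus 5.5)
The plan is to reduce every double integral to iterated single integrals, integrating first in whichever variable appears in only one factor, and then to apply Lemma \ref{lem: single integral bounds}. The common device is \eqref{est: Riesz}: $\sup_{y}\int_M d(y,y')^{2\alpha-d}m(dy')\leq C$. Along the way we use Fubini/Tonelli freely, since all integrands are nonnegative.

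For \eqref{est:2int 1LY Riesz} and \eqref{est:2int 1Bridge Riesz}, integrate first in $y'$, where the only $y'$-dependence is the Riesz factor. By \eqref{est: Riesz} this gives at most $C\int_M G^{\epsilon'}_t(d(x,y))m(dy)$, respectively $C\int_M G^{\epsilon'}_{t,x,z}(at,y)m(dy)$. These are bounded by \eqref{est:1pt Gauss est} and by \eqref{est: bridge density}, uniformly in $x,z$ (and in $t\in(0,1)$, $a\in(0,\tfrac12)$ for the bridge case).

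For \eqref{est:2int 1Bridge 1LY Riesz}, the variables separate as written: $y$ appears in the bridge factor and the Riesz factor, and $y'$ appears in the Riesz factor and $G^{\epsilon'}_{a(1-a)t}(d(x',y'))$. First integrate in $y$ with $y'$ fixed. Estimate \eqref{est: bridge density with riesz} with $x_0=y'$ gives $\int_M G^{\epsilon'}_{t,x,z}(at,y)d(y,y')^{2\alpha-d}m(dy)\leq C[(a(1-a)t)^{\frac{2\alpha-d}{2}}+1]$, uniformly in $y'$. Then integrate $G^{\epsilon'}_{a(1-a)t}(d(x',y'))$ in $y'$ using \eqref{est:1pt Gauss est}, which gives a uniform constant. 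The product is the claimed bound.

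For \eqref{est:2int 2LY Riesz} and \eqref{est:2int 2Bridge Riesz}, as typed both kernel factors depend on $y$, and $y'$ appears only in the Riesz factor. Presumably the intended second factor is evaluated at $y'$, mirroring \eqref{est:2int 1Bridge 1LY Riesz}. I will treat that reading, and note that under the literal reading one integrates $y'$ first and then bounds a product of two kernels in $y$ by $\sup$ of one times the integral of the other, which is cruder.

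With the second factor in $y'$, handle \eqref{est:2int 2LY Riesz} by integrating in $y$ first, using \eqref{est: pt gauss riesz 1 int} with $x'$ replaced by $y'$. This yields $C(t^{\frac{2\alpha-d}{2}}+1)$ uniformly in $y'$, and the remaining $\int G^{\epsilon'}_t(d(x',y'))m(dy')$ is bounded by \eqref{est:1pt Gauss est}. For \eqref{est:2int 2Bridge Riesz}, apply \eqref{est: bridge density with riesz} in $y$ with $x_0=y'$, then integrate the second bridge density in $y'$ via \eqref{est: bridge density}.

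The main obstacle is only bookkeeping. One must check that every single-integral bound is uniform in the frozen variable, which is exactly why Lemma \ref{lem: single integral bounds} was stated with suprema over $x_0$ and $x'$. One must also check that the constants' dependence on $\epsilon$ rather than $\epsilon'$ is preserved, as guaranteed there. Measurability for Tonelli is clear from continuity of the integrands off the diagonal.
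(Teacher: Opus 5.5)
Your proposal is correct and is essentially the paper's argument: Tonelli plus a $1$--$\infty$ H\"older step in one variable, reducing each bound to Lemma \ref{lem: single integral bounds}. The one deviation is in \eqref{est:2int 1Bridge 1LY Riesz}, where you freeze $y'$ and use \eqref{est: bridge density with riesz} followed by \eqref{est:1pt Gauss est}. The paper instead integrates out $y'$ first via \eqref{est: pt gauss riesz 1 int} and then needs only the plain mass bound \eqref{est: bridge density}; both orders give the same right-hand side.

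Your reading of \eqref{est:2int 2LY Riesz} and \eqref{est:2int 2Bridge Riesz}, with the second kernel evaluated at $y'$, is the one used in Lemmas \ref{lem: large time L1 bd} and \ref{lem:short time L1 bound}. Drop your aside about the literal reading, though: it is not merely ``cruder'' but false. For $x=x'$ the left side of \eqref{est:2int 2LY Riesz} is of order $\int_M G^{\epsilon'}_t(d(x,y))^2\,m(dy)\asymp t^{-d/2}$, which is $\gg t^{\frac{2\alpha-d}{2}}$ as $t\downarrow0$.
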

\begin{proof}
    These all follow from applying the $1-\infty$ H\"older inequality in one space integral then applying the appropriate estimates from Lemma \ref{lem: single integral bounds} to what remains. We will work this out for \eqref{est:2int 1Bridge 1LY Riesz}, from which it will be obvious that the same argument works for all others.\\
    Define the function $$f_{x',a,t}(y):=\int_M d(y,y')^{2\alpha-d}G^{\epsilon'}_{a(1-a)t}(d(x',y')) m(dy').$$
    Now we can rewrite the left hand side of \eqref{est:2int 1Bridge 1LY Riesz} and apply $1-\infty$ H\"older: \begin{align*}
        &\iint_{M^2} G^{\epsilon'}_{t,x,z}(at,y)d(y,y')^{2\alpha-d}G^{\epsilon'}_{a(1-a)t}(d(x',y')) m(dy)m(dy')\\
        =&\int_M G^{\epsilon'}_{t,x,z}(at,y)f_{x',a,t}(y) m(dy)
        \leq \int_M G^{\epsilon'}_{t,x,z}(at,y)m(dy)\left(\sup_{x'\in M}\sup_{y\in M}f_{x',a,t}(y)\right)
    \end{align*}
    Now apply \eqref{est: bridge density} and \eqref{est: pt gauss riesz 1 int} to the single integrals to conclude the proof.
\end{proof}

\bigskip

In the next three subsections, we focus on establishing the desired upper bounds for $\mathcal{L}_n$. This is accomplished by first proving the corresponding estimate for $\mathcal{L}_1$, and then extending it to general $\mathcal{L}_n$ via an iteration argument. To set up this procedure, we first introduce some notation. Recall the definitions of $G^\epsilon_t$ and $\tilde{G}^\epsilon_t$ in
\eqref{eq: LY and Hsu Gaussian def}, as well as that of $G^\epsilon_{t,x,y}(s,z)$ in \eqref{def: quotients of different UB}. Throughout the remainder of the paper, we fix an $\epsilon>0$ once and for all, and for each $n\geq 1$ define
\[
\epsilon_n := \Bigl(1-\frac{1}{n}\Bigr)\epsilon, \qquad
c_n := c_{\epsilon_n},
\]
together with 
\begin{align}\label{def: G^ns}
{G^{[n]}_t(r) := G^{\epsilon_n}_t(r), \qquad
\tilde{G}^{[n]}_t(r) := \tilde{G}^{\epsilon_n}_t(r), \qquad
G^n_{t,x,y}(s,z) := G^{\epsilon_n}_{t,x,y}(s,z).}
\end{align}
By construction, $\epsilon_1<\epsilon_n<\epsilon$ for all $n$, and consequently
$c_1<c_n<c_\epsilon$.

\subsection{Proof of Theorem \ref{thm: L1 bd} for $t\ge1$}
{With the above preparations in place, we are now ready to prove the estimate
claimed in Theorem~\ref{thm: L1 bd}. We begin by considering the case where the time parameter $t$ is large, for which the following result holds.}
\begin{lemma}\label{lem: large time L1 bd}
    Suppose $W$ satisfies Dalang's condition $\alpha>{(d-2)}/{2}$ and $t\ge1$. Define a function $$k^{1}_L(s):=\sup_{x,x'\in M}\iint_{M^2}  [G^{[1]}_s(x,z)+1][G^{[1]}_s(x',z')+1]\bm{d}(z,z')^{2\alpha-d}m(dz)m(dz'),\quad s>0.$$
    We have $$k^1_L(s)\leq C_{\alpha,M}(1+s^{\frac{2\alpha-d}{2}}),$$
    for some positive constant $C_{\alpha,M}$ depending on $\alpha$ and $M$. Moreover,
    \begin{align}\label{bound:L1 by k1}\mathcal{L}_1(t,x_0,x,x_0',x')\leq C_L [G^{[1]}_t(d(x_0,x))+1][G^{[1]}_t(d(x_0',x'))+1]\left(\int_0^t k^1_L(s)ds\right),\end{align}
    where $C_L$ is a positive constant depending on $M$.
\end{lemma}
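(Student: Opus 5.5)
The proof has two parts: first the bound on $k^1_L$, then the bound on $\mathcal{L}_1$.

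\emph{Bound on $k^1_L$.} Expand the product $[G^{[1]}_s(x,z)+1][G^{[1]}_s(x',z')+1]$ into four terms. The term $1\cdot 1$ gives $\iint_{M^2}\bm{d}(z,z')^{2\alpha-d}\,m(dz)m(dz')\le C$ by \eqref{est: Riesz}. The two mixed terms are each bounded by \eqref{est:2int 1LY Riesz}. The term $G^{[1]}_s G^{[1]}_s$ is bounded by \eqref{est:2int 2LY Riesz} (more precisely, by its variant with $G^{\epsilon'}_t(d(x',y'))$), which gives $C(s^{\frac{2\alpha-d}{2}}+1)$. This variant follows from the same $1$--$\infty$ H\"older argument together with \eqref{est: pt gauss riesz 1 int} and \eqref{est:1pt Gauss est}. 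The case $\alpha\ge d/2$ (when $2\alpha-d\ge0$) is trivial because $\bm{d}^{2\alpha-d}\le D_M^{2\alpha-d}$ is bounded. Summing the four terms gives $k^1_L(s)\le C(1+s^{\frac{2\alpha-d}{2}})$. Dalang's condition $\alpha>(d-2)/2$ makes this bound integrable near $s=0$.

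\emph{Bound on $\mathcal{L}_1$.} By definition,
\[
\mathcal{L}_1(t,x_0,x,x_0',x')=\int_0^t\iint_{M^2} P_{t-s}(z,x)P_{t-s}(z',x')P_s(x_0,z)P_s(x_0',z')\bm{G}_{\alpha,\rho}(z,z')\,m(dz)m(dz')\,ds.
\]
Bound $\bm{G}_{\alpha,\rho}\le C(\bm{d}^{2\alpha-d}+1)$ using Proposition \ref{Prop: G_alpha}; the additive constant is absorbed since $D_M<\infty$, and the cases $\alpha\ge d/2$ give bounded kernels. Since $t\ge1$, the prefactor satisfies $[G^{[1]}_t(d(x_0,x))+1]\ge1$, so it suffices to bound the integral by a constant times $\int_0^t k^1_L(s)\,ds$. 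Split the time integral at $t/2$.

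For $s\le t/2$, we have $t-s\ge1/2$. Lemma \ref{lem: heat kernel bd} and Proposition \ref{Prop:LiYau} then give $P_{t-s}\le C$ uniformly. Using Proposition \ref{Prop:LiYau} again, $P_s(x_0,z)\le C[G^{[1]}_s(d(x_0,z))+1]$, and the space integral is at most $C\,k^1_L(s)$ after taking the supremum over $x_0,x_0'$.

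For $s>t/2$, we have $s\ge1/2$, so $P_s\le C$. We then bound $P_{t-s}(z,x)\le C[G^{[1]}_{t-s}(d(z,x))+1]$, which yields $C\,k^1_L(t-s)$. Substituting $s\mapsto t-s$ shows this contribution is also at most $C\int_0^t k^1_L$.

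Both halves are routine, so I expect no serious obstacle. The one point needing care is the uniform constant from Li--Yau: we must set $\epsilon=\epsilon_1$ in Proposition \ref{Prop:LiYau} and check the $t^m\wedge1$ term is bounded, which it is with $m=1$.
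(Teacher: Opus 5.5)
Your proof is correct and follows the paper's argument. The paper also reduces to $s\le t/2$ via the $s\leftrightarrow t-s$ symmetry, which your explicit split into two halves makes precise. It then uses $t-s\ge 1/2$ to bound those heat kernels by a constant against the prefactor $[G^{[1]}_t+1]\ge1$, applies Li--Yau to the remaining kernels to get $k^1_L(s)$, and bounds $k^1_L$ by the same four-term expansion with \eqref{est: Riesz}, \eqref{est:2int 1LY Riesz} and \eqref{est:2int 2LY Riesz}. As you noticed, the version of the last estimate with $d(x',y')$ is the one actually needed.

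Two side remarks need small corrections. First, at $\alpha=d/2$ the kernel $\bm{G}_{\alpha,\rho}$ has a logarithmic singularity, so it is not bounded. This is harmless here: $\log^-\bm{d}\le C_\eta\bm{d}^{-\eta}$ gives an integrable $s^{-\eta/2}$, while $\int_0^t k^1_L(s)\,ds\ge m_0^2t$ for $t\ge1$. Second, since $\epsilon_1=0$, the bound $P_s\le C[G^{[1]}_s+1]$ is not literally Proposition~\ref{Prop:LiYau}, which requires $\epsilon>0$. It does hold on compact $M$ (parametrix near the diagonal, Lemma~\ref{lem: heat kernel bd} away from it), and the paper relies on it in the same way.
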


{The analysis in the large-time regime is considerably simpler than in the short-time case, as it does not require a detailed understanding of the geometry of geodesics. Although the argument in \cite{CO25} can be adapted \textit{mutatis mutandis} to the present setting, we provide a brief alternative proof here for the sake of completeness and to fit our current formulation.}

\begin{proof}
Recall the definition of $\mathcal{L}_n$ in \eqref{def: L_n}, we have 
\begin{align}\label{eq: L_1}\mathcal{L}_1(t,x_0,x,x_0',x')=\int_0^t ds\int_{M^2}dzdz' P_{t-s}(x_0,z)P_s(z,x)P_{t-s}(x_0',z')P_s(z',x')\bm{G}_{\alpha,\rho}(z,z').\end{align}
    Use the symmetry of $s$ and $t-s$ to reduce the time integral to that over $\int_0^{t/2}$, and observe that for $t\ge 1$, $$\frac{G^{[1]}_{t-s}(d(z,x))+1}{G^{[1]}_{t}(d(x_0,x))+1}\leq C.$$
    Thus, applying Corollary \ref{Prop:LiYau} to the heat kernels in the right-hand side of \eqref{eq: L_1} with gives us \eqref{bound:L1 by k1}.
    This leaves us to show the estimate for $k^1_L(s)$. For that, we simply write \begin{align*}
        &\iint_{M^2} [G^{[1]}_s(x,z)+1][G^{[1]}_s(x',z')+1]\bm{d}(z,z')^{2\alpha-d} m(dz)m(dz')\\
        =&\iint_{M^2} G^{[1]}_s(x,z)G^{[1]}_s(x',z') \bm{d}(z,z')^{2\alpha-d} m(dz)m(dz')+\iint_{M^2} G^{[1]}_s(x,z) \bm{d}(z,z')^{2\alpha-d} m(dz)m(dz')\\
        &+\iint_{M^2} G^{[1]}_s(x',z') \bm{d}(z,z')^{2\alpha-d} m(dz)m(dz')+\iint_{M^2}  \bm{d}(z,z')^{2\alpha-d}1 m(dz)m(dz')=:I_{xx'}+I_{x}+I_{x'}+I.
    \end{align*}
    The proof is concluded by applying
    \eqref{est: Riesz} to $I$, \eqref{est:2int 1LY Riesz} to $I_x$ and $I_{x'}$, and \eqref{est:2int 2LY Riesz} to $I_{xx'}$.
\end{proof}

\subsection{Proof of Theorem \ref{thm: L1 bd} for $0<t<1$}

We now turn to the small-time regime, where a careful treatment of geodesic
geometry becomes essential. In order to state the main result of this section,
we first introduce some notation. Recall the definitions of
$G^{[n]}_t$ and $\tilde{G}^{[n]}_t$ in \eqref{def: G^ns}. We begin with the following
decomposition:
\begin{align}\label{decomp of upper bounds}
   &\frac{\bigl[G^{[n]}_{t-s}(\bm{d}(x_0,z))+1\bigr]\bigl[G^{[n]}_s(\bm{d}(z,x))+1\bigr]}
   {\1_{d(x_0,x)< D}G^{[n]}_t(d(x_0,x))+\1_{d(x_0,x)\geq D}\tilde{G}^{[n]}_t(d(x_0,x))+1}\nonumber \\
   \leq\;&
   \frac{G^{[n]}_{t-s}(\bm{d}(x_0,z)) G^{[n]}_{s}(\bm{d}(z,x))}
   {\1_{d(x_0,x)< D}G^{[n]}_t(d(x_0,x))+\1_{d(x_0,x)\geq D}\tilde{G}^{[n]}_t(d(x_0,x))} \nonumber\\
   &\quad
   + \frac{\text{terms containing at most one Gaussian factor}}{C} \nonumber\\
   \leq\;&
   G^{n}_{t,x_0,x}(s,z)
   + \bigl(G^{[n]}_{t-s}(d(x_0,z))+G^{[n]}_s(d(z,x))\bigr)
   + C_H \nonumber\\
   =:\;&
   G^n_{t,x_0,x}(s,z)+f^n_{t,x_0,x}(s,z)+C_H .
\end{align}

To simplify notation, whenever no confusion may arise we write $G^n(*)$ and
$G^n(*')$ (respectively, $f^n(*)$ and $f^n(*')$) to denote
$G^n_{t,x,y}(s,z)$ (respectively, $f^n_{t,x,y}(s,z)$), with the understanding
that the variables may be $(x,y,z)$ or $(x',y',z')$. With this convention, we
obtain
\begin{align}\label{quotient bound by G f}
    &\frac{\bigl[G^{[n]}_{t-s}(\bm{d}(x_0,z))+1\bigr]\bigl[G^{[n]}_{t-s}(\bm{d}(z,x))+1\bigr]}
    {\1_{d(x_0,x)< D}G^{[n]}_t(d(x_0,x))+\1_{d(x_0,x)\geq D}\tilde{G}^{[n]}_t(d(x_0,x))+1} \nonumber\\
    &\quad\times
    \frac{\bigl[G^{[n]}_{t-s}(\bm{d}(x_0',z'))+1\bigr]\bigl[G^{[n]}_s(\bm{d}(z',x'))+1\bigr]}
    {\1_{d(x_0',x')< D}G^{[n]}_t(d(x_0',x'))+\1_{d(x_0',x')\geq D}\tilde{G}^{[n]}_t(d(x_0',x'))+1} \nonumber\\
    \leq\;&
    G^n(*)G^n(*')
    + f^n(*)f^n(*')
    + C_H^2 \nonumber\\
    &\quad
    + G^n(*)f^n(*')
    + G^n(*')f^n(*)
    + C_H\bigl[f^n(*)+f^n(*')+G^n(*)+G^n(*')\bigr].
\end{align}
We denote the right-hand side of \eqref{quotient bound by G f} by
$R^n_{t,x_0,x,x_0',x'}(s,z,z')$.

\begin{lemma}\label{lem:short time L1 bound}
    Assume ${d}/{2}>\alpha>{(d-2)}/{2}$.
    Define for each $s>0$ ,$$k^1_S(s):=\sup_{t\geq 2s}\sup_{x_0,x,x_0',x'\in M}\iint_{M^2}R^1_{t,x_0,x,x_0',x'}(s,z,z')\bm{d}(z,z')^{2\alpha-d}m(dz)m(dz').$$
    We have $$k^1_S(s)\leq C_M(1+s^{\frac{2\alpha-d}{2}}),\quad\mathrm{for\ all}\ s>0,$$
    for some positive constant depending on $M$. In addition, for all $t>0$
    $$\mathcal{L}_1(t,x_0,x,x_0',x')\leq C_S [G^{[2]}_t(d(x_0,x))+1][G^{[2]}_t(d(x_0',x'))+1]\left(\int_0^t k^1_S(s)ds\right),$$
    where $C_S$ depends on $\alpha$ and $M$.
    \end{lemma}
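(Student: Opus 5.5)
The proof splits into two parts: first a bound on $k^1_S(s)$ obtained by integrating the expansion \eqref{quotient bound by G f} term by term, and then the bound on $\mathcal{L}_1$ obtained by inserting heat kernel bounds into \eqref{eq: L_1}.

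\textbf{Bound on $k^1_S$.} Since $\alpha<d/2$, Proposition~\ref{Prop: G_alpha} gives $\bm{G}_{\alpha,\rho}(z,z')\leq C\,\bm{d}(z,z')^{2\alpha-d}$, so only the Riesz kernel needs to be treated. Fix $t\geq 2s$ and set $a=s/t\in(0,\tfrac12]$. Then $a(1-a)t=s(t-s)/t$ lies between $s/2$ and $s$, so every factor $(a(1-a)t)^{\frac{2\alpha-d}{2}}$ is at most $C s^{\frac{2\alpha-d}{2}}$. I integrate each of the nine terms of $R^1$ against $\bm{d}(z,z')^{2\alpha-d}$ and match it to an estimate from Lemma~\ref{lem: double integral bounds}:
\begin{itemize}
\item $G^1(*)G^1(*')$ is handled by \eqref{est:2int 2Bridge Riesz}.
\item $G^1(*)f^1(*')$ and its mirror are handled by \eqref{est:2int 1Bridge 1LY Riesz}, or by \eqref{est:2int 1Bridge Riesz} together with \eqref{est: pt gauss riesz 1 int} through the $1$--$\infty$ H\"older argument.
\item $f^1(*)f^1(*')$ is a sum of products of two Gaussians at times $s$ or $t-s$, which \eqref{est:2int 2LY Riesz} bounds by $C(s^{\frac{2\alpha-d}{2}}+(t-s)^{\frac{2\alpha-d}{2}}+1)\leq C(s^{\frac{2\alpha-d}{2}}+1)$, using $t-s\geq s$ and $2\alpha-d<0$.
\item The terms linear in $G^1$ or $f^1$ are handled by \eqref{est:2int 1Bridge Riesz} and \eqref{est:2int 1LY Riesz}, and the constant term by \eqref{est: Riesz}.
\end{itemize}
The bridge estimates require $t<1$. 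For $t\geq1$ the denominator of $G^1_{t,x_0,x}$ is bounded below, so $G^1$ is dominated by $G^{[1]}_sG^{[1]}_{t-s}$ and we are back in the Li--Yau case. All of these bounds are uniform in the spatial points and in $t\geq 2s$, which gives $k^1_S(s)\leq C(1+s^{\frac{2\alpha-d}{2}})$.

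\textbf{Bound on $\mathcal{L}_1$.} Start from \eqref{eq: L_1} and use the $s\leftrightarrow t-s$ symmetry to restrict to $s\in(0,t/2]$, i.e.\ $t\geq 2s$. Bound each heat kernel $P_{t-s}(x_0,z)$ and $P_s(z,x)$ by $C[G^{[1]}+1]$ via Proposition~\ref{Prop:LiYau}. Then multiply and divide by
\[
\mathcal{D}:=\1_{d(x_0,x)<D}G^{[1]}_t(d(x_0,x))+\1_{d(x_0,x)\geq D}\tilde G^{[1]}_t(d(x_0,x))+1,
\]
and by the primed analogue. The remaining quotient is exactly what \eqref{quotient bound by G f} controls. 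Integrating it against $\bm{d}(z,z')^{2\alpha-d}$ gives $k^1_S(s)$, and integrating in $s$ gives $\int_0^t k^1_S$.

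It then remains to compare the extracted denominator factor with $G^{[2]}_t(d(x_0,x))+1$:
\begin{itemize}
\item If $d(x_0,x)<D$, then \eqref{est: LY beats LY larger eps} gives $G^{[1]}_t\leq G^{[2]}_t$, since $\epsilon_1<\epsilon_2$.
\item If $d(x_0,x)\geq D$ and $t<1$, then \eqref{est: LY beat Hsu larger eps} with $r'=D$ gives $\tilde G^{[1]}_t\leq C G^{[2]}_t$, again because $\epsilon_1<\epsilon_2$. This is precisely where the extra $\epsilon$ room is spent.
\item If $t\geq1$, both sides are bounded above and below by constants.
\end{itemize}

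\textbf{Main obstacle.} The delicate point is the near-diagonal sets $\{d(x_0,x)<D\}$ versus the far sets, together with the regime $1\le t$ where the bridge estimates \eqref{est:2int 1Bridge Riesz}--\eqref{est:2int 2Bridge Riesz} were only stated for $t<1$. I would handle this first: for $t\geq 1$ and $s\leq t/2$, we have $t-s\geq\tfrac12$, so $G^{[1]}_{t-s}$ is bounded and the bridge term reduces to a single Li--Yau Gaussian at time $s$. Apart from that, the argument is bookkeeping, since the geometric input is already contained in Lemma~\ref{lem: single integral bounds}.
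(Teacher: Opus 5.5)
This is essentially the paper's proof: you restrict to $s\le t/2$, bound the heat kernels by $C[G^{[1]}+1]$, multiply and divide by the mixed Li--Yau/Hsu denominator, absorb it into $[G^{[2]}_t+1]$ via \eqref{est: LY beats LY larger eps}--\eqref{est: LY beat Hsu larger eps}, and match the terms of \eqref{quotient bound by G f} to Lemma~\ref{lem: double integral bounds}, exactly as the paper does. Your extra treatment of $t\ge 1$ is welcome, since the paper leaves it implicit even though $k^1_S$ takes a supremum over all $t\ge 2s$ and the bridge estimates are only stated for $t<1$; however, argue it through the ratio rather than a lower bound on the denominator, because $G^{[1]}_t(\bm{d}(x_0,x))$ decays like $t^{-d/2}$: for $t\ge1$ and $s\le t/2$ one has $G^{[1]}_{t-s}(r)/G^{[1]}_t(r')\le 2^{d/2}e^{D_M^2/2}$, so $G^1_{t,x_0,x}(s,z)\le C\,G^{[1]}_s(\bm{d}(x_0,z))$ uniformly in $t$.
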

\begin{proof}
    As in the proof for Lemma \ref{lem: large time L1 bd}, we reduce the time integral to the same integral over $\int_0^{t/2}$ so that we have $0<s<\frac{t}{2}$ and so $a:=\frac{s}{t}\in(0,\frac{1}{2})$. We next apply Corollary \ref{Prop:LiYau} to all heat kernels in the integrand of $\mathcal{L}_1$ with $\epsilon=\epsilon_1$, then multiply and divide by \begin{align*}
        \mathcal{G}^1(t,x,x_0,x',x_0'):=&[\1_{d(x_0,x)< D}G^{[1]}_t(d(x_0,x))+\1_{d(x_0,x)\geq D}\tilde{G}^{[1]}_t(d(x_0,x))+1]\\
        \times&[\1_{d(x_0',x')< D}G^{[1]}_t(d(x_0',x'))+\1_{d(x_0',x')\geq D}\tilde{G}^{[1]}_t(d(x_0',x'))+1].
    \end{align*}
    By \eqref{est: LY beats LY larger eps} and \eqref{est: LY beat Hsu larger eps}, we have$$\mathcal{G}^1(t,x,x_0,x',x_0')\leq C [G^{[2]}_t(d(x_0,x))+1][G^{[2]}_t(d(x_0',x'))+1].$$ This gives us \begin{align*}
        & \mathcal{L}_1(t,x,x_0,x,x_0',x')\\
        \leq& C\int_0^t \iint_{M^2}  d(z,z')^{2\alpha-d}[G^{[1]}_{t-s}(d(x_0,z))+1][G^{[1]}_{s}(d(z,x))+1]\\
        &\times [G^{[1]}_{t-s}(d(x_0',z'))+1][G^{[1]}_{s}(d(z',x'))+1]m(dz)m(dz')ds\\
        =&C \mathcal{G}^1(t,x,x_0,x',x_0')\int_0^{\frac{t}{2}}\iint_{M^2} R^1_{t,x_0,x,x_0',x'}(s,z,z') \bm{d}(z,z')^{2\alpha-d}m(dz)m(dz')ds\\
        \leq& C [G^{[2]}_t(d(x_0,x))+1][G^{[2]}_t(d(x_0',x'))+1]\int_0^{\frac{t}{2}} \iint_{M^2}R^1_{t,x_0,x,x_0',x'}(s,z,z')\bm{d}(z,z')^{2\alpha-d}m(dz)m(dz')ds\\
    \end{align*}
    We have obtained the desired $\mathcal{L}_1$ bound, so it remains to prove the estimate for $k^1_S(s)$. For this, one only needs to decompose the integral into finitely many integrals using \eqref{quotient bound by G f}, then apply Lemma \ref{lem: double integral bounds} in the following way.
    \begin{enumerate}
        \item For the constant term, apply \eqref{est: Riesz}.
        \item For the $Cf^1$ terms, apply \eqref{est:2int 1LY Riesz}.
        \item For the $CG^1$ terms, apply \eqref{est:2int 1Bridge Riesz}.
        \item For the $f^1f^1$ term, apply \eqref{est:2int 2LY Riesz}.
        \item For the $f^1G^1$ terms, apply \eqref{est:2int 1Bridge 1LY Riesz}.
        \item For the $G^1G^1$ term, apply \eqref{est:2int 2Bridge Riesz}.
    \end{enumerate}
    Items 1--3 provide constant bounds for the integrals of the corresponding terms, while items 4--6 provide \(C(s^{\frac{2\alpha-d}{2}}+1)\) bounds instead. For example, \begin{align*}
        &\int_0^{\frac{t}{2}}\iint_{M^2}G^{1}(*)\bm{d}(z,z')^{2\alpha-d}m(dz)m(dz')ds\\
        =& \int_0^{\frac{t}{2}}\iint_{M^2} G^1_{t,x_0,x}(s,z)\bm{d}(z,z')^{2\alpha-d}m(dz)m(dz')ds\leq C_{\alpha,M}
    \end{align*}
    by \eqref{est:2int 1Bridge Riesz}, and \begin{align*}
        &\int_0^{\frac{t}{2}}\iint_{M^2}G^{1}(*)G^{1}(*')\bm{d}(z,z')^{2\alpha-d}m(dz)m(dz')ds\\
        =& \int_0^{\frac{t}{2}}\iint_{M^2} G^1_{t,x_0,x}(s,z)G^1_{t,x_0',x'}(s,z') \bm{d}(z,z')^{2\alpha-d}m(dz)m(dz')ds \leq C_{\alpha,M}(s^{\frac{2\alpha-d}{2}}+1)
    \end{align*}
    by \eqref{est:2int 2Bridge Riesz}. Collecting these estimates completes the proof.
\end{proof}

\subsection{Bounding $\mathcal{L}_n$}\label{Sec Ln bd}

We will now bound $\mathcal{L}_n$ inductively. Our main result is the following Lemma.
\begin{theorem}\label{thm: Ln bd}
    Let $\alpha\in \left(\frac{d-2}{2},\frac{d}{2}\right)$ and $C$ be the constant from Theorem \ref{thm: L1 bd}. Recall the definition of $R^n$ from \eqref{quotient bound by G f}, and define a sequence of functions $k^n$ on $\R_+$ by \begin{align*}
        k^n(s):=&k^n_L(s)+k^n_S(s),\quad\text{where}\\
        k^n_L(s):=&\sup_{x,x'\in M}\iint_{M^2}  [G^{[n]}_s(x,z)+1][G^{[n]}_s(x',z')+1]\bm{d}(z,z')^{2\alpha-d}m(dz)m(dz'),\\
        k^n_S(s):=&\sup_{t\geq 2s}\sup_{x_0,x,x_0',x'\in M}\iint_{M^2}R^n_{t,x_0,x,x_0',x'}(s,z,z')\bm{d}(z,z')^{2\alpha-d}m(dz)m(dz').
    \end{align*}
    For a constant $C_M$ independent of $n$, these satisfy
    \[k^n(s)\leq C_M(1+s^{\frac{2\alpha-d}{2}}).\]
    Then, define iteratively another sequence of functions $h_n$ on $\R_+$ by \[\label{def:hn} h_0(t)=1,\quad h_{n+1}(t):=\int_0^t h_n(t-s)k^n(s)ds.\]
    For all $n\geq 0$, $h_n$ is non-negative and non-decreasing. Furthermore, for all $n\geq 1$ and $x_0,x,x_0',x'\in M$, we have \[\label{est: Ln} \mathcal{L}_n(t,x_0,x,x_0',x')\leq (2C)^n[G^{[[n+1]}_t(d(x_0,x))+1][G^{[n+1]}_t(d(x_0',x'))+1]h_n(t).\]
\end{theorem}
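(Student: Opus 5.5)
The proof has three parts: a uniform bound on $k^n$, monotonicity of $h_n$, and an induction on $n$ for the $\mathcal{L}_n$ estimate.

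\emph{Bound on $k^n$.} The proofs of Lemma~\ref{lem: large time L1 bd} and Lemma~\ref{lem:short time L1 bound} go through verbatim with $\epsilon_1$ replaced by $\epsilon_n$. The point is that every estimate in Lemma~\ref{lem: single integral bounds} and Lemma~\ref{lem: double integral bounds} holds for all $0<\epsilon'\leq\epsilon$ with constants depending only on $\epsilon$. Since $\epsilon_n<\epsilon$ for every $n$, the constant $C_M$ in $k^n(s)\leq C_M(1+s^{\frac{2\alpha-d}{2}})$ is independent of $n$.

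\emph{Monotonicity of $h_n$.} By induction, if $h_n\geq0$ is non-decreasing, then $h_{n+1}(t)=\int_0^t h_n(t-s)k^n(s)\,ds$ is non-negative because $k^n\geq0$. It is non-decreasing because, for $t'>t$, the integrand on $[0,t]$ only increases ($h_n(t'-s)\geq h_n(t-s)$) and the extra piece $[t,t']$ is non-negative.

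\emph{Base case $n=1$.} This is exactly Theorem~\ref{thm: L1 bd}, since $h_1(t)=\int_0^t k^0(s)\,ds$. Here I interpret $k^0$ as $k^1$ (or define $k^0$ the same way with index $1$), and $C\leq 2C$.

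\emph{Induction step.} Write $\mathcal{L}_{n+1}=\mathcal{L}_0\rhd\mathcal{L}_n$ and substitute the inductive bound for $\mathcal{L}_n(s,x_0,z,x_0',z')$:
\[
\mathcal{L}_{n+1}\leq (2C)^n\int_0^t ds\iint P_{t-s}(z,x)P_{t-s}(z',x')\,[G^{[n+1]}_s(d(x_0,z))+1][G^{[n+1]}_s(d(x_0',z'))+1]\,h_n(s)\,\bm{G}_{\alpha,\rho}(z,z').
\]
Then:
\begin{itemize}
\item Bound $\bm{G}_{\alpha,\rho}\leq C\,\bm{d}^{2\alpha-d}$ using Proposition~\ref{Prop: G_alpha} and compactness.
\item Bound the heat kernels by Proposition~\ref{Prop:LiYau} with parameter $\epsilon_{n+1}$. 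This gives $[G^{[n+1]}_{t-s}+1]$ factors; when $t<1$ and $d(x_0,x)\geq D$, Lemma~\ref{lem: heat kernel bd} gives the sharper $\tilde G$ form instead.
\item Multiply and divide by the normalising product $\mathcal{G}^{n+1}$ built from $G^{[n+1]}_t(d(x_0,x))$ and $\tilde G^{[n+1]}_t(d(x_0,x))$.
\item Split the time integral into $s\in[t/2,t]$ (where $t-s\leq t/2$) and $s\in[0,t/2]$.
\end{itemize}
On each piece the normalised integrand is dominated by $R^{n+1}$ as in \eqref{decomp of upper bounds}--\eqref{quotient bound by G f}, with the roles of the small and large time variables matched appropriately. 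For $t\geq1$, use the $k_L$ form as in Lemma~\ref{lem: large time L1 bd}. Taking suprema gives
\[
\mathcal{L}_{n+1}\leq (2C)^n\, C\,\mathcal{G}^{n+1}\int_0^t h_n(s)\,k^{n+1}(t-s)\,ds ,
\]
where the factor $2$ accounts for the two halves of the time integral and the monotonicity of $h_n$ is used. Finally, \eqref{est: LY beats LY larger eps} and \eqref{est: LY beat Hsu larger eps}, applied with $\epsilon_{n+1}<\epsilon_{n+2}$, give $\mathcal{G}^{n+1}\leq C'[G^{[n+2]}_t+1][G^{[n+2]}_t+1]$. A change of variables $s\mapsto t-s$ turns the convolution into $h_{n+1}(t)$.

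\emph{Main obstacle.} The delicate step is controlling the constants. The comparison constant in \eqref{est: LY beat Hsu larger eps} between $\epsilon_{n+1}$ and $\epsilon_{n+2}$ must be absorbed into the factor $2C$ uniformly in $n$. Since $\epsilon_{n+2}-\epsilon_{n+1}\to0$, this constant is not obviously uniform. I would try to avoid the issue by applying \eqref{est: LY beat Hsu larger eps} only once, from $\epsilon_{n+1}$ to $\epsilon$, and comparing with $G^{\epsilon}$ through a uniform bound. If necessary, I would replace $(2C)^n$ by $(C'')^n$ with $C''$ absorbing a fixed constant, since only geometric growth is needed later. A second, smaller point is the index bookkeeping: the $h_{n+1}$ recursion uses $k^n$, so the Gaussian index $n+1$ carried in $\mathcal{L}_n$ must match the index used in the corresponding $k$. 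I would simply track which $\epsilon_j$ appears in each step.
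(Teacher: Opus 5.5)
Your outline matches the paper's argument: the uniform bound on $k^n$ because all $\epsilon_n<\epsilon$, monotonicity of $h_n$ by induction, symmetrisation of the time integral about $t/2$ using $h_n(s)\le h_n(t-s)$, and normalisation by $\mathcal{G}^{n+1}$ so that the integrand is dominated by $R^{n+1}$. The gap is at the step you flag as the main obstacle, and your proposed remedy does not close it.

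Comparing once from $\epsilon_{n+1}$ to $\epsilon$ does not help. The constant in \eqref{est: LY beat Hsu larger eps} behaves like $\sup_{0<t<1}t^{-\frac{d-1}{2}}e^{-D^2\kappa/t}\asymp\kappa^{-\frac{d-1}{2}}$, where $\kappa=c_{\epsilon'}^{-1}-c_\epsilon^{-1}$. Since $\epsilon-\epsilon_{n+1}=\epsilon/(n+1)\to0$ as well, each step still costs a power of $n$. Compounded over $n$ steps this gives a factor like $(n!)^{\gamma}$. The sequence $h_n(t)$ decays only like $C^nt^{n\nu/2}/\Gamma(n\nu/2+1)$ with $\nu=2\alpha+2-d\in(0,2)$, so it cannot absorb such a factor in general, and the series for $\mathcal{K}_\beta$ would no longer be controlled. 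Your fallback of writing $(C'')^n$ presupposes a bounded per-step constant, which is exactly what is in question. The paper's proof also passes over this point, saying only that one ``appropriately'' applies the two comparison estimates.

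The missing idea is to use the $+1$ instead of \eqref{est: LY beat Hsu larger eps}. The function $\tilde G$ enters only through the normalising denominator, not through bounds on the integrand's heat kernels $P_{t-s}(z,x)$ as your second bullet suggests. It appears only when $0<t<1$ and $\bm{d}(x_0,x)\ge D$. In that case, since $c_{\epsilon'}\le c_\epsilon$ for all $\epsilon'\le\epsilon$,
\[
\tilde G^{[n+1]}_t(\bm{d}(x_0,x))\le t^{-\frac{2d-1}{2}}e^{-\frac{D^2}{c_\epsilon t}}\le C_{D,\epsilon}:=\sup_{0<\tau<1}\tau^{-\frac{2d-1}{2}}e^{-\frac{D^2}{c_\epsilon\tau}}<\infty .
\]
Hence $\tilde G^{[n+1]}_t+1\le (C_{D,\epsilon}+1)\,(G^{[m]}_t+1)$ for every $m$. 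In the other cases, \eqref{est: LY beats LY larger eps} gives the comparison with constant $1$. The Li--Yau constants are uniform for $\epsilon_{n+1}\ge\epsilon/2$, so the per-step constant is independent of $n$, which is what $(2C)^n$ requires.

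This also shows that raising the Gaussian index is unnecessary. You can run the whole induction at one fixed $\epsilon>0$, so all $k^n$ coincide. That removes the index mismatch you noticed: Theorem \ref{thm: L1 bd} gives $h_1(t)=\int_0^tk^1(s)\,ds$, and your step produces $\int_0^th_n(t-s)k^{n+1}(s)\,ds$, whereas the stated recursion uses $k^n$. Alternatively, define the $h_n$ recursion with the common majorant $C_M(1+s^{\frac{2\alpha-d}{2}})$, which is all that Lemma \ref{lemma:behavior of H} uses.
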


\begin{proof}
    The bound for $k^n$ follows from showing it for $k^n_L$ and $k^n_S$, which requires no changes from the bound for $k^1_L$ and $k^1_S$ since all $\epsilon_n<\epsilon$.
    \medskip
    
    The non-negativity and non-decreasing property of all $h_n$ can be proven by induction using a similar argument to that of \cite[Lemma 3.21]{CO25}, where the only change necessary is that in the induction step assuming it holds up to $n$, the non-negativity of $k^n$ is used instead of one singular function $k$ for all $n$.
    \medskip
    
    For the $\mathcal{L}_n$ bound, the case $n=1$ is part of the statement of Theorem \ref{thm: L1 bd}. We now suppose the result holds up to $n$, and will prove it for $n+1$. By induction hypothesis, we have \begin{align*}
        &\mathcal{L}_{n+1}(t,x_0,x,x_0',x')\\
        =&\int_0^t \iint_{M^2}\mathcal{L}_0(t-s,x_0,y,x_0',y')\bm{G}_{\alpha,\rho}(y,y')\mathcal{L}_n(s,y,x,y',x')m(dy)m(dy')ds\\
        \leq& (2C)^n\int_0^tds \iint_{M^2}m(dy)m(dy')\mathcal{L}_0(t-s,x_0,y,x_0',y')\bm{G}_{\alpha,\rho}(y,y')\\
        &\times [G^{[n+1]}_s(d(y,x))+1][G^{[n+1]}_s(d(y',x'))+1]h_n(s)\\
        \leq& (2C)^n\int_0^tds h_n(s)\iint_{M^2}m(dy)m(dy')[G^{[n+1]}_{t-s}(d(x_0,y))+1][G^{[n+1]}_{t-s}(d(x_0',y'))+1]\bm{G}_{\alpha,\rho}(y,y')\\
        &\times [G^{[n+1]}_s(d(y,x))+1][G^{[n+1]}_s(d(y',x'))+1]
    \end{align*}
    By the non-negativity of all $h_n$, the symmetry of $t-s$ and $s$ in the time integrand above, and the symmetry of the roles of $x_0,x$ and $x_0',x'$ respectively, one can reduce the time integral above to an integral over $\int_{t/2}^tds$, from which a change of variables $t-s\mapsto s$ gives \begin{align*}
        & \int_0^tds h_n(s)\iint_{M^2}m(dy)m(dy')[G^{[n+1]}_{t-s}(d(x_0,y))+1][G^{[n+1]}_{t-s}(d(x_0',y'))+1]\bm{G}_{\alpha,\rho}(y,y')\\
        &\times [G^{[n+1]}_s(d(y,x))+1][G^{[n+1]}_s(d(y',x'))+1]\\
        \leq&2\int_0^{\frac{t}{2}}ds h_n(t-s)\iint_{M^2}m(dy)m(dy')[G^{[n+1]}_{s}(d(x_0,y))+1][G^{[n+1]}_{s}(d(x_0',y'))+1]\bm{G}_{\alpha,\rho}(y,y')\\
        &\times [G^{[n+1]}_{t-s}(d(y,x))+1][G^{[n+1]}_{t-s}(d(y',x'))+1].
    \end{align*}
    If $t\ge 1$, the last $\iint_{M^2}$ above can be treated as in the proof of Lemma \ref{lem: large time L1 bd} by dividing and multiplying by $[G^{[n+1]}_t(d(x_0,x))+1][G^{[n+1]}_t(d(x_0',x'))+1]$. For $0<t<1$, divide and multiply by 
    \begin{align*}
        &[\1_{d(x_0,x)<D}G^{[n]}_t(d(x_0,x))+\1_{d(x_0,x)\geq D}\tilde{G}^{[n]}_t(d(x_0,x))+1]\\
        \times&[\1_{d(x_0',x')< D}G^{[n]}_t(d(x_0',x'))+\1_{d(x_0',x')\geq D}\tilde{G}^{[n]}_t(d(x_0',x'))+1],
    \end{align*} instead and the last $\iint_{M^2}$ above can be treated as in the proof of Lemma \ref{lem:short time L1 bound}. The proof is finished after appropriately applying \eqref{est: LY beats LY larger eps} and \eqref{est: LY beat Hsu larger eps}.
\end{proof}

\subsection{Well-Posedness and Moment Upper Bound}\label{Sec: existence and moment bound}
We are now ready to prove the well-posedness and moments upper bounds for equation \eqref{eq: SHE}. Recall the iteration procedure outlined in Section 2. In particular, equation \eqref{iteration corelation} implies that the existence of an $L^2$-solution to \eqref{eq: SHE} relies on the convergence of the series,
\begin{equation*}
    \mathcal{K}_\beta(t,x,z,x',z')=\sum_{n=0}^\infty \beta^{2n}\mathcal{L}_n(t,x,z,x',z').
\end{equation*}
Now that $\mathcal{L}_n$ is controlled by $h_n$ thanks to Theorem \ref{thm: Ln bd}, we set for any $\lambda>0$,  
$$H_\lambda(t):=\sum_{n=0}^\infty \lambda^{2n}h_n(t).$$

\begin{corollary}\label{cor:K beta bound}
    For any $t>0$, $\epsilon>0$ and $x,x_0,x_0',x'\in M$, we have \begin{align}\label{boudn: K_beta}\mathcal{K}_\beta(t,x_0,x,x_0',x')\leq [G^\epsilon_t(d(x_0,x))+1][G^\epsilon_t(d(x_0',x'))+1]H_{2\beta^2 C}(t).\end{align}
\end{corollary}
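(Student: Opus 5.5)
The plan is to sum the bounds of Theorem~\ref{thm: Ln bd} term by term, after unifying the Gaussian factors $G^{[n+1]}_t$ into the single function $G^\epsilon_t$.

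First, recall $\epsilon_{n+1}=(1-\frac{1}{n+1})\epsilon<\epsilon$, so $c_{n+1}<c_\epsilon$. By the monotonicity relation \eqref{est: LY beats LY larger eps}, this gives
\[
G^{[n+1]}_t(r)=G^{\epsilon_{n+1}}_t(r)\leq G^\epsilon_t(r)
\]
for all $r>0$, $t>0$ and every $n\geq0$. (For $n=0$ we have $\epsilon_1=0$, and $\mathcal{L}_0=P_tP_t$ is bounded by Proposition~\ref{Prop:LiYau}, applied with $\epsilon$ itself, by $C[G^\epsilon_t+1][G^\epsilon_t+1]$.) Hence the estimate of Theorem~\ref{thm: Ln bd} yields, uniformly in $n\geq1$ and in $x_0,x,x_0',x'$,
\[
\mathcal{L}_n(t,x_0,x,x_0',x')\leq (2C)^n\,[G^\epsilon_t(d(x_0,x))+1][G^\epsilon_t(d(x_0',x'))+1]\,h_n(t),
\]
and the $n=0$ term fits the same form with $h_0=1$.

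Next, multiply by $\beta^{2n}$ and sum over $n$. The common prefactor $[G^\epsilon_t+1][G^\epsilon_t+1]$ comes out of the series, leaving $\sum_n \beta^{2n}(2C)^n h_n(t)=\sum_n(\sqrt{2C}\beta)^{2n}h_n(t)$. Since $h_n\geq0$, this is dominated by $H_\lambda(t)=\sum_n\lambda^{2n}h_n(t)$ for any $\lambda\geq\sqrt{2C}\,\beta$. The statement's index $\lambda=2\beta^2C$ is presumably a notational convention, since $\lambda^{2n}$ with $\lambda=\sqrt{2C}\beta$ equals $(2C\beta^2)^n$. I would record the identification as $H$ evaluated at the parameter for which $\lambda^{2n}=(2C\beta^2)^n$, absorbing any constant into $C$. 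The constant in the $n=0$ term can likewise be absorbed by enlarging $C$.

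The main obstacle is finiteness of $H_\lambda(t)$, without which the bound is vacuous. I would prove it by induction on $n$, using $k^n(s)\leq C_M(1+s^{(2\alpha-d)/2})$ uniformly in $n$ and the fact that $\gamma:=\frac{2\alpha-d}{2}+1>0$ by Dalang's condition. This gives the convolution bound
\[
h_n(t)\leq C_M^n\,\Gamma(\gamma)^n\, t^{n\gamma}/\Gamma(n\gamma+1)\cdot e^{t},
\]
or one can bound $h_n$ by the iterated convolution of $k(s)=C_M(1+s^{\gamma-1})$ as in \cite{CO25,ChenDalangAOP}. Then $\sum_n\lambda^{2n}h_n(t)$ is a Mittag-Leffler type series, which converges for every $\lambda$ and $t$ and grows at most like $e^{\theta t}$. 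This convergence also justifies the termwise manipulations above.
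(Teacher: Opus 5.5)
This is essentially the paper's proof: you bound each $\mathcal{L}_n$ by Theorem~\ref{thm: Ln bd}, replace $G^{[n+1]}_t$ by $G^\epsilon_t$ using $\epsilon_{n+1}<\epsilon$ and \eqref{est: LY beats LY larger eps}, and sum the nonnegative series. Your handling of the $n=0$ term and of the index $2\beta^2C$ (enlarge $C$ so that $2\beta^2C\geq 1$) is more careful than the paper's one-line proof, except that the Li--Yau constant from $\mathcal{L}_0$ has to go into an overall prefactor rather than into $(2C)^n$, since $(2C)^0=1$.

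Finiteness of $H_\lambda$ is not needed for this inequality between nonnegative series, since the comparison holds in $[0,\infty]$; the paper proves finiteness separately in Lemma~\ref{lemma:behavior of H} by a Laplace-transform argument. Your Mittag-Leffler bound is a valid alternative route there, provided you enlarge $C_M$ by $\sup_{s>0}(1+s^{1-\gamma})e^{-s}$ to justify the factor $e^{t}$.
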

\begin{proof}
    This follows trivially from the definition of $\mathcal{K}_\beta$, Theorem \ref{thm: Ln bd}, and $\epsilon_n<\epsilon$ for all $n\ge1$.
\end{proof}

The following result for $H_\lambda$ is needed to obtain exponential (in time) moment bounds for the solution $u$.
\begin{lemma}\label{lemma:behavior of H}
    Let $\alpha\in(\frac{d-2}{2},\frac{d}{2})$, $\lambda>0$.  There exist constants $C, \theta>0$ depending on $\alpha,\lambda$ such that for all $t>0$, $$H_{\lambda}(t)\leq C e^{\theta t}.$$
\end{lemma}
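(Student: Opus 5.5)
The approach is to dominate the recursively defined $h_n$ by iterated convolutions of a single explicit kernel, and then sum the resulting series via Laplace transforms, or equivalently via the Mittag-Leffler function. By Theorem \ref{thm: Ln bd}, $k^n(s)\leq k(s):=C_M(1+s^{\frac{2\alpha-d}{2}})$ for all $n$, with $C_M$ independent of $n$. Write $\gamma:=\frac{2\alpha-d}{2}+1=\alpha-\frac{d-2}{2}\in(0,1)$. Dalang's condition $\alpha>\frac{d-2}{2}$ is exactly what makes $\gamma>0$, so $k$ is locally integrable. Since the $h_n$ are nonnegative, induction on the recursion $h_{n+1}=h_n*k^n$ gives
\[
h_n\leq k^{*n}*\mathbb{1}=:\bar h_n,
\]
where $*$ denotes convolution on $[0,t]$ and $\bar h_0=1$. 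It therefore suffices to bound $\sum_n\lambda^{2n}\bar h_n(t)$.

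The key step is an explicit bound on $\bar h_n$. I would first bound $k(s)\leq C'(s^{\gamma-1}+1)\le C''(1+t)^{1-\gamma}s^{\gamma-1}$ for $s\le t$. Since $h_n$ is nondecreasing, it is enough to work on $[0,T]$ with $k(s)\le C_T s^{\gamma-1}$, where $C_T:=C''(1+T)^{1-\gamma}$. The Beta integral $\int_0^t (t-s)^{a-1}s^{b-1}ds=B(a,b)t^{a+b-1}$ then gives by induction
\[
\bar h_n(t)\leq \frac{(C_T\Gamma(\gamma))^n}{\Gamma(n\gamma+1)}\,t^{n\gamma},\qquad t\le T.
\]
Summing, $H_\lambda(t)\le E_\gamma\big(\lambda^2C_T\Gamma(\gamma)t^\gamma\big)$, where $E_\gamma(z)=\sum_n z^n/\Gamma(n\gamma+1)$ is the Mittag-Leffler function. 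It satisfies $E_\gamma(z)\le C e^{z^{1/\gamma}}$ for $z\ge0$. This yields $H_\lambda(t)\le C\exp\!\big(c\,(1+t)^{(1-\gamma)/\gamma}t\big)$, which is not exponential when $\gamma<1$. So the $T$-dependence has to be removed.

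This is the main obstacle: handling the constant part of $k$, which comes from the large-time, flat-spectrum behaviour, together with the singular part. To do so cleanly, I would use the Laplace transform instead. For $\theta>0$,
\[
\widehat{k}(\theta):=\int_0^\infty e^{-\theta s}k(s)\,ds=C_M\big(\theta^{-1}+\Gamma(\gamma)\theta^{-\gamma}\big)<\infty,
\]
which tends to $0$ as $\theta\to\infty$. Choose $\theta$ large enough that $\lambda^2\widehat{k}(\theta)\le\frac12$. Then
\[
\int_0^\infty e^{-\theta t}\bar h_n(t)\,dt=\theta^{-1}\widehat{k}(\theta)^n,
\]
so $\int_0^\infty e^{-\theta t}H_\lambda(t)\,dt\le 2/\theta$. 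To convert this integral bound into a pointwise one, I would use that each $h_n$, and hence $H_\lambda$, is nondecreasing by Theorem \ref{thm: Ln bd}. Then
\[
H_\lambda(t)\,\theta^{-1}e^{-\theta(t+1)}\le\int_t^{t+1}e^{-\theta r}H_\lambda(r)\,dr\le 2/\theta,
\]
which gives $H_\lambda(t)\le 2e^{\theta}e^{\theta t}$. Taking $C=2e^\theta$, this proves the lemma, with $\theta$ depending on $\alpha,\lambda$ (and on $M$ through $C_M$).

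A remaining subtlety is that $H_\lambda$ is defined with the true $h_n$ rather than $\bar h_n$. The monotonicity argument can be applied directly to $H_\lambda$ itself, using the Laplace bound inherited from $h_n\le\bar h_n$. Monotone convergence justifies interchanging the sum and the integral, since all terms are nonnegative.
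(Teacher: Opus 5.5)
Your proof is correct and follows the paper's route. The paper likewise dominates $h_n$ by $C^n\tilde h_n$, the $n$-fold convolution of $1$ with the single kernel $1+s^{\frac{2\alpha-d}{2}}$, and takes Laplace transforms. It then defers the passage to a pointwise exponential bound to \cite[Lemma 2.5]{ChenKim19}, which is exactly the monotonicity argument you spell out.

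One harmless slip: the trivial bound is $\int_t^{t+1}e^{-\theta r}H_\lambda(r)\,dr\ge H_\lambda(t)e^{-\theta(t+1)}$. Your version with the extra factor $\theta^{-1}$ holds only when $\theta\ge\ln 2$. You may assume this, since $\theta$ can always be enlarged.
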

\begin{proof}
    Deviating slightly from \cite[Lemma 2.5]{ChenKim19} and \cite[Lemma 4.2]{CO25}, we observe that by the $k^n$ bound in Theorem \ref{thm: Ln bd}, we have for all $n\geq 0$, \begin{equation}\label{est:hn bd}
        h_n(t)\leq C^n\Tilde{h}_n(t),
    \end{equation}
    where $C$ is the constant from said bound, $\Tilde{h}_0(t)=h_0(t)=1$ and $\Tilde{h}_n(t):=\int_0^t \Tilde{h}_n(t-s)(1+s^{\frac{2\alpha-d}{2}})ds$. Thus, for all $\gamma>0$ we have$$\int_0^{+\infty}e^{-\gamma t}h_n(t)dt\leq C^n\int_0^{+\infty}e^{-\gamma t}\tilde{h}_n(t)dt$$
    The right hand side integral above can be treated as in the proof of \cite[Lemma 2.5]{ChenKim19} and \cite[Lemma 4.2]{CO25}, giving us our desired result.
\end{proof}

We now have all the ingredients to prove Theorem \ref{thm: main result}. Indeed, the six-step Picard iteration scheme used in \cite{ChenThesis,ChenDalangAOP} with the modifications presented in \cite{ChenKim19} is usable here to obtain $L^2(\Omega)$ continuity and the correlation formula. The same proof as Theorem~1.3 in \cite{COV23} is possible by the above estimates for the first inequality in the $p$-th moment bound. The exponential bound for the $p$-th moment is due to Lemma \ref{lemma:behavior of H}.

\section{Comparison Principles and Moment Lower Bound}
In this section, we show that the exponential growth in time of the moments of the solution $u(t,x)$, established in Theorem~\ref{thm: main result}, is sharp by proving a matching lower bound. This extends the result of \cite{CO25} from function-valued initial conditions to general measure-valued initial conditions. The main ingredients of the proof are a strong comparison principle for \eqref{eq: SHE} and the Markov property of the solution $u(t,x)$.

\bigskip

We begin by following the strategy of \cite{CH19} to establish weak and strong comparison principles, which in particular imply the positivity of the solution for any positive measure initial condition. To this end, we first prove a Hölder continuity lemma.

\begin{lemma}[H\"older Continuity of Solutions]\label{lem: Holder cont}

Let $\alpha>{(d-2)}/{2}$ (equivalently, $\nu:=2\alpha+2-d>0$). Fix $\varepsilon\in(0,1)$ and $\varepsilon<T$. Then for every $p\ge 2$ there is a constant $C$ depending on $p,\varepsilon$ and $T$ such that for all
$s,t\in[\varepsilon,T]$ and $x,y\in M$,
\begin{equation}\label{eq:increment-moment}
\mathbb{E}\big|u(t,x)-u(s,y)\big|^p
\le C\Big(|t-s|^{\frac{p\nu}{4}} + d(x,y)^{\frac{p\nu}{2}}\Big).
\end{equation}
Consequently, $u$ admits a modification that is jointly continuous on
$[\varepsilon,T]\times M$, and for any exponents
\[
\gamma_t\in\bigl(0,\min(\nu/4,1)\bigr),\qquad \gamma_x\in\bigl(0,\min(\nu/2,1)\bigr),
\]
the modification is H\"older of order $\gamma_t$ in time and $\gamma_x$ in space
(on $[\varepsilon,T]\times M$).
\end{lemma}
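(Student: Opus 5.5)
We split the increment with the mild formulation \eqref{pam:mild}. We have $u(t,x)-u(s,y)=[J_\mu(t,x)-J_\mu(s,y)]+\beta[I(t,x)-I(s,y)]$, and we treat the two brackets separately.

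\textbf{Deterministic part.} On $[\varepsilon,T]\times M$ the map $(t,x)\mapsto J_\mu(t,x)$ is smooth. The heat kernel $P_t(x,y)$ and its first space and time derivatives are bounded uniformly for $t\ge \varepsilon$, and $\mu$ is finite. Hence $|J_\mu(t,x)-J_\mu(s,y)|\le C(|t-s|+d(x,y))$, which is dominated by the claimed rate because $\nu/4,\nu/2$ may be replaced by $\min(\cdot,1)$ on a compact set.

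\textbf{Stochastic part.} By the Burkholder--Davis--Gundy inequality, $\E|I(t,x)-I(s,y)|^p$ is controlled by the $p/2$ power of a quadratic-variation integral. Minkowski's inequality then moves $\|u(r,z)u(r,z')\|_{p/2}$ inside, and Theorem \ref{thm: main result} bounds this by $C J_\mu(r,z)J_\mu(r,z')e^{2\theta r}$. We write the spatial increment with $t=s$ and the temporal increment with $x=y$.

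For the spatial increment the kernel is $\Delta P:=P_{t-r}(x,\cdot)-P_{t-r}(y,\cdot)$, and we need to bound
\[
\int_0^t\iint_{M^2}|\Delta P(z)||\Delta P(z')|\,|\bm G_{\alpha,\rho}(z,z')|\,J_\mu(r,z)J_\mu(r,z')\,m(dz)m(dz')dr .
\]
The obstacle is the singularity of $J_\mu(r,\cdot)$ as $r\downarrow 0$ when $\mu$ is a measure, since then $J_\mu$ is not bounded. We use the semigroup identity
\[
\int_M P_{t-r}(x,z)J_\mu(r,z)\,m(dz)=J_\mu(t,x),
\]
but it cannot be applied directly to $|\Delta P|$. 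Instead we split the integral at $r=t/2\ge\varepsilon/2$.

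For $r\in[t/2,t]$, $J_\mu(r,\cdot)\le C_\varepsilon$ is bounded. The integral then reduces to the standard flat-type estimate
\[
\int_0^{t/2}\iint |\Delta P_{r}(z)||\Delta P_r(z')|\,d(z,z')^{2\alpha-d}\le C d(x,y)^{\nu}.
\]
To prove it we interpolate $|\Delta P_r|\le \min\{P_r(x,\cdot)+P_r(y,\cdot),\, C d(x,y) r^{-1/2}\tilde G_r\}$ using gradient bounds on the heat kernel, and we use Proposition \ref{Prop: G_alpha} together with the integral estimates \eqref{est: pt gauss riesz 1 int} and Lemma \ref{lem: double integral bounds}. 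The point is to check that the gradient bound $|\nabla P_r(\cdot,z)|\le Cr^{-1/2}G^{\epsilon}_r(d)$ (up to the $\tilde G$ correction) holds uniformly on compact $M$ for small $r$, which we may assume as standard.

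For $r\in[0,t/2]$, $t-r\ge\varepsilon/2$, so $\Delta P$ is smooth and $|\Delta P|\le C_\varepsilon d(x,y)$. The remaining integral $\int_0^{t/2}\iint J_\mu J_\mu |\bm G_{\alpha,\rho}|$ is finite by \eqref{est: pt gauss riesz 1 int} and \eqref{est:2int 2LY Riesz}, since it behaves like $\int_0^{t/2}(r^{(2\alpha-d)/2}+1)dr<\infty$ under Dalang's condition.

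The temporal increment is handled the same way. It splits into the integral over $[s,t]$ of $P_{t-r}$, which contributes $|t-s|^{\nu/2}$ to the second moment, plus the integral over $[0,s]$ of $P_{t-r}-P_{s-r}$, treated with the time-derivative bound $|\partial_rP_r|\le Cr^{-1}G_r$ and the same split at $s/2$. This yields $|t-s|^{\nu/2}$ in the second-moment scale, hence $|t-s|^{p\nu/4}$ after raising to the power $p/2$.

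\textbf{Conclusion.} Kolmogorov's continuity criterion on the $(d+1)$-dimensional parameter set $[\varepsilon,T]\times M$, covered by finitely many charts, gives the continuous modification with the stated H\"older exponents, by letting $p\to\infty$.

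The main obstacle is the spatial increment estimate for the singular small-$r$ part. Its difficulty is that it needs heat kernel gradient bounds that also hold near the cut locus, and these must be compatible with the $\tilde G$ correction of Lemma \ref{lem: heat kernel bd}.
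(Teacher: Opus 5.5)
Your proof is correct, but it organizes the stochastic estimate differently from the paper.

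\textbf{How the two routes differ.} The paper's sketch applies the difference bounds \eqref{est:HK space inc}--\eqref{est:HK time inc} on the whole interval $\tau\in[0,t]$, inserts the moment bound $CJ_\mu(\tau,z)e^{\theta\tau}$, and then appeals to Lemma \ref{lem: double integral bounds}. Since $J_\mu(\tau,z)=\int_M P_\tau(z,w)\mu(dw)$, the products $G_{t-\tau}(d(x,z))\,G_\tau(d(z,w))$ that appear are bridge-type kernels, whose denominators are bounded by a constant depending on $\varepsilon$ because $t\ge\varepsilon$. The two singular ends $\tau\downarrow0$ and $\tau\uparrow t$ are thus handled in one pass with the bridge machinery.

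You instead split at $r=t/2$ (resp.\ $s/2$), which separates the two singularities. On the late half $J_\mu(r,\cdot)\le C_\varepsilon$. On the early half the kernel difference is Lipschitz with an $\varepsilon$-dependent constant. So only the single-time bounds \eqref{est:2int 2LY Riesz}, \eqref{est:2int 1LY Riesz}, \eqref{est: Riesz} are needed, the same computation as in Lemma \ref{lem: cont dep on initial data}.

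\textbf{What your route buys.} It is more elementary: it never uses the bridge estimates, hence none of the geodesic analysis of Lemmas \ref{lem: F concentration} and \ref{lem: F concentration no cutlocus}. It also makes explicit where $t\ge\varepsilon$ enters; the paper states its difference bounds for times in $[\varepsilon,T]$ but applies them to $P_{t-\tau}$ with $t-\tau\downarrow0$. Finally, your $\min$-interpolation, i.e.\ splitting the $r$-integral at $r=d(x,y)^2$ (resp.\ $r=|t-s|$), yields the exponent $\nu$ itself. A fixed-$\beta$ interpolation as in \eqref{est:HK space inc} needs $2\beta<\nu$ for the time integral to converge, so it gives only exponents below $\nu$. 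That suffices for Kolmogorov but not for \eqref{eq:increment-moment} as displayed. The paper's route, in turn, reuses machinery it has already built.

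Your capping of the exponents at $\min(\cdot,1)$ is actually necessary. Since $\E|u(t,x)-u(t,y)|^p\ge|J_\mu(t,x)-J_\mu(t,y)|^p$, and the latter is of order exactly $d(x,y)^p$ wherever $\nabla J_\mu(t,\cdot)\neq0$, \eqref{eq:increment-moment} as written fails for $\nu>2$. The paper's integral lemmas assume $\alpha<d/2$ anyway.

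\textbf{A correction to your closing remark.} No $\tilde G$ factor should enter the gradient branch, and none is needed. The extra factor $r^{-(d-1)/2}$ would degrade the exponent produced by the small-$r$ integral. What you need is the logarithmic gradient bound
\[
|\nabla_x\log P_r(x,z)|\le C\bigl(d(x,z)/r+r^{-1/2}\bigr),\qquad r\le T,
\]
which holds uniformly on compact $M$, cut locus included. Combined with the Li--Yau bound it gives $|\nabla_xP_r(x,z)|\le Cr^{-1/2}\bigl(G^{\epsilon'}_r(d(x,z))+1\bigr)$, which is also what the paper uses. The cut locus matters only for the denominators of bridge densities, which your route avoids.

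To centre the Gaussian in this branch, use it only when $d(x,y)\le\sqrt r$, and apply the mean value theorem along a minimal geodesic from $x$ to $y$. There $d(w,z)\ge d(x,z)-d(x,y)$ costs only a multiplicative constant and a slightly larger $\epsilon'$, so no special curve is required.
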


\begin{proof}

Fix an $\epsilon>0$. We first claim two heat kernel difference estimates. First, a space increment estimate: for all $\beta\in(0,1]$, $t\in[\varepsilon,T]$ and $x,y,z\in M$
\begin{equation}\label{est:HK space inc}
    |P_t(x,y)-P_t(x,z)|\leq C\frac{d(y,z)^\beta}{t^{\beta/2}}[G^\epsilon_t(\bm{d}(x,y))+G^\epsilon_t(\bm{d}(x,z))],
\end{equation}
and second, a time increment estimate: for all \(\beta\in(0,1], \varepsilon\le s<t\le T\), \(x,y\in M\)\begin{equation}\label{est:HK time inc}
    |P_t(x,y)-P_s(x,y)|\leq C\frac{|t-s|^{\beta/2}}{s^{\beta/2}}G^\epsilon_s(\bm{d}(x,y)).
\end{equation} 

We first pursue \eqref{est:HK space inc}.
Recall the well-known heat kernel gradient estimate (c.f. \cite{LiYau86,WithLudo})\[|\nabla_yP_t(x,y)|\leq C\left[\frac{d(x,y)}{t}+\frac{1}{\sqrt{t}}\right]P_t(x,y)\leq \frac{C}{\sqrt{t}}G^\epsilon_t(x,y).\]
 Applying the mean value theorem along a curve $\gamma_u$ connecting $y$ and $z$ such that
 \[
 d(x, \gamma_u)\geq\min\{d(x,y),d(x,z)\},
 \]
 along with the above gradient estimate give
\begin{equation}\label{est:HK space inc2}
    |P_t(x,y)-P_t(x,z)|\leq C\frac{\bm{d}(y,z)}{\sqrt{t}}[G^\epsilon_t(\bm{d}(x,y))+G^\epsilon_t(\bm{d}(x,z))].
\end{equation}
Finally, for any $\beta\in(0,1]$, we can apply \eqref{est:HK space inc2} and obtain
\begin{align*}
    |P_t(x,y)-P_t(x,z)|&=|P_t(x,y)-P_t(x,z)|^\beta|P_t(x,y)-P_t(x,z)|^{1-\beta}\\
    &\leq \left(C\frac{\bm{d}(y,z)}{\sqrt{t}}[G^\epsilon_t(\bm{d}(x,y))+G^\epsilon_t(\bm{d}(x,z))]\right)^\beta|G^\epsilon_t(\bm{d}(x,y))+G^\epsilon_t(\bm{d}(x,z))|^{1-\beta}.
\end{align*}This finishes the proof of \eqref{est:HK space inc}.

\smallskip
For \eqref{est:HK time inc}, since $P_t(x,y)$ is $C^1$ in time, we have \begin{align*}
    |P_t(x,y)-P_s(x,y)|\leq &\abs{\int_s^t \partial_\tau P_\tau(x,y) d\tau}=\abs{\int_s^t (\Laplace_M)_yP_\tau(x,y) d\tau}\\
    \leq& \frac{C|t-s|}{s}G^\epsilon_{s}(\bm{d}(x,y)).
\end{align*}
Then \eqref{est:HK time inc} follows by the same arguments as for \eqref{est:HK space inc}.

\smallskip
With \eqref{est:HK space inc} and \eqref{est:HK time inc} in hand, the remainder of the proof follows along the same lines as in \cite[Section 4]{CH19} and \cite[Section 7]{COV23}. We therefore only outline the argument and refer the reader to the aforementioned works for the details.

\smallskip
It is clear that we only bound the moments of increments of $I(t,x)$ defined in \eqref{pam:mild}, from which applying Burkholder-Gundy-Davies gives us for all \(x,y\in M,\varepsilon\le s<t\le T\)\begin{align*}
    \E[|I(t,x)-I(s,y)|^p]^{\frac{2}{p}}\leq& C(I(t;x,y)+I(0,s;y)+I(s,t;y)),
\end{align*}
where
\begin{align*}
    I(t;x,y):=&\int_0^t\iint_{M^2}G_{\alpha,\rho}(z,z')\E[|u(\tau,z)|^p]^{\frac{1}{p}}\E[|u(\tau,z')|^p]^{\frac{1}{p}}\\
    &\times [P_{t-\tau}(x,z)-P_{t-\tau}(y,z)][P_{t-\tau}(x,z')-P_{t-\tau}(y,z')]m(dz)m(dz')d\tau,\\
    I(0,s;y):=&\int_0^s\iint_{M^2}G_{\alpha,\rho}(z,z')\E[|u(\tau,z)|^p]^{\frac{1}{p}}\E[|u(\tau,z')|^p]^{\frac{1}{p}}\\
    &\times [P_{t-\tau}(y,z)-P_{s-\tau}(y,z)][P_{t-\tau}(y,z')-P_{s-\tau}(y,z')]m(dz)m(dz')d\tau,\\
    I(s,t;y):=&\int_s^t\iint_{M^2}G_{\alpha,\rho}(z,z')\E[|u(\tau,z)|^p]^{\frac{1}{p}}\E[|u(\tau,z')|^p]^{\frac{1}{p}}\\
    &\times P_{t-\tau}(y,z)P_{t-\tau}(y,z')m(dz)m(dz')d\tau,
\end{align*}
It remains to estimate $I(t;x,y)$, $I(0,s;y)$ and $I(s,t;y)$, for which one argues in a similar way to \cite[Section 4]{CH19} and \cite[Section 7]{COV23}.
First, apply \eqref{est:HK space inc} in $I(t;x,y)$, \eqref{est:HK time inc} to $I(0,s;y)$, and Proposition \ref{Prop:LiYau} in $I(s,t;y)$ to the appropriate heat kernel differences or heat kernels. 
One then applies the moment estimates of $u$ from Theorem \ref{thm: main result} and Proposition \ref{Prop: G_alpha} to the covariance, from which the desired result would follow from applying the integral estimates in Lemma \ref{lem: double integral bounds} and the fact that $\mu$ is a finite measure on $M$. 
In particular, the estimates for $I(0,s;y)$ and $I(s,t;y)$ would give the temporal increment estimate, while the estimate on $I(t;x,y)$ would give the spatial increment estimate. 
H\"older regularity follows by applying Kolmogorov continuity.
\end{proof}

We will also need the following elementary lemmas. The first shows that the solution can be approximated by solutions with smooth initial data. Recall that for any finite measure $\mu$ on $M$, $J_\mu(t,x)$ denotes the solution to the homogeneous equation with initial data $\mu$, as introduced in \eqref{pam:mild}.

\begin{lemma}\label{lem:initial condition approx}
    Assume $\alpha>{(d-2)}/{2}$. Let $\mu$ be a finite measure on $M$. For $\delta>0$, denote by $u_\delta$ the random field solution to equation \eqref{eq: SHE}  starting from $J_\mu(\delta,x)m(dx)$, and $u$ being the solution starting from $\mu$. We have $$\lim_{\delta\downarrow0}\E[|u(t,x)-u_\delta(t,x)|^2]=0.$$
\end{lemma}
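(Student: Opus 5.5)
We compare the two mild equations directly. Note first that $u_\delta(0,\cdot)=J_\mu(\delta,\cdot)\,m$ satisfies $J_{J_\mu(\delta)}(t,x)=\int_M P_t(x,y)J_\mu(\delta,y)\,m(dy)=J_\mu(t+\delta,x)$ by the semigroup property. Set $w_\delta:=u-u_\delta$. Then
\[
w_\delta(t,x)=\bigl[J_\mu(t,x)-J_\mu(t+\delta,x)\bigr]+\beta\int_0^t\int_M P_{t-s}(x,y)\,w_\delta(s,y)\,W(dy,ds).
\]
We then define $g_\delta(t,x,x'):=\E[w_\delta(t,x)w_\delta(t,x')]$. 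Applying It\^o's isometry exactly as in the derivation of \eqref{iteration corelation} gives a renewal equation with forcing $\widetilde{D}_\delta(t,x,x'):=D_\delta(t,x)D_\delta(t,x')$, where $D_\delta(t,x):=J_\mu(t,x)-J_\mu(t+\delta,x)$. Iterating yields
\[
g_\delta(t,x,x)=D_\delta(t,x)^2+\beta^2\int_0^t\!\iint_{M^2} D_\delta(s,y)D_\delta(s,y')\,\mathcal{K}_\beta\text{-type kernel}\,m(dy)m(dy')\,ds.
\]
Concretely, this is $\sum_n\beta^{2n}$ times $\mathcal{L}_n$-type integrals in which the initial measure $\mu\otimes\mu$ is replaced by the time-dependent source $\widetilde D_\delta$. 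An alternative way to write the same quantity is $\iint (\mu-\mu_\delta)(dz)(\mu-\mu_\delta)(dz')\,\mathcal{K}_\beta(t,x,z,x,z')$ with $\mu_\delta:=J_\mu(\delta,\cdot)m$, which is a signed measure. The cleanest route is this second form, since the correlation formula \eqref{iteration corelation} is linear in the (signed) product initial data.

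With this representation in hand, the proof reduces to a dominated convergence argument. Fix $t>0$ and $x$. Corollary \ref{cor:K beta bound} bounds $\mathcal{K}_\beta(t,x,z,x,z')\le[G^\epsilon_t(d(x,z))+1][G^\epsilon_t(d(x,z'))+1]H(t)$, and for fixed $t>0$ this is a bounded function of $(z,z')$. I would establish continuity of $\mathcal{K}_\beta(t,x,\cdot,x,\cdot)$ in $(z,z')$ by showing that each $\mathcal{L}_n(t,x,z,x,z')$ is continuous in $(z,z')$ and that the series converges uniformly thanks to Theorem \ref{thm: Ln bd}. Continuity of each $\mathcal{L}_n$ follows from continuity of $P_s(y,z)$ in $z$, together with dominated convergence in the $s$- and $y$-integrals; the domination comes from the bounds already used for Lemma \ref{lem:short time L1 bound}, namely Li--Yau plus the integral estimates of Lemma \ref{lem: double integral bounds}.

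Since $\mu_\delta\to\mu$ weakly as $\delta\downarrow0$ (the heat semigroup is Feller on compact $M$) and $\mu_\delta(M)=\mu(M)$, the product measures also converge weakly: $\mu_\delta\otimes\mu_\delta\to\mu\otimes\mu$ and $\mu\otimes\mu_\delta\to\mu\otimes\mu$. Expanding $(\mu-\mu_\delta)^{\otimes2}=\mu\otimes\mu-\mu\otimes\mu_\delta-\mu_\delta\otimes\mu+\mu_\delta\otimes\mu_\delta$ and integrating the bounded continuous function $\mathcal{K}_\beta(t,x,\cdot,x,\cdot)$ against each piece, every term tends to $\iint\mathcal{K}_\beta\,d\mu\,d\mu$. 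The signed sum therefore goes to $0$. The term $D_\delta(t,x)^2\to0$ by continuity of $P$ in time for $t>0$.

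The main obstacle is justifying the formula for $\E[w_\delta^2]$ for signed data and establishing joint continuity of $\mathcal{K}_\beta$ near the diagonal $z=z'$ where the Riesz-type covariance is singular. The representation itself follows by polarization: $\E[u u_\delta]$, $\E[u^2]$ and $\E[u_\delta^2]$ each have a correlation formula with the respective product measures, obtained by the same Picard/It\^o isometry argument, since $u$ and $u_\delta$ are driven by the same noise. For the continuity, I would use uniform integrability in $z$ of $P_{t-s}(\cdot,z)$ for $s$ away from $t$, and the uniform in $z$ smallness of the contribution from $s$ near $t$ given by the $k^n$ bounds, which are integrable at $0$ under Dalang's condition.
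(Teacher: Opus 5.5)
Your proposal is correct and follows essentially the same route as the paper: write $\E[(u-u_\delta)^2]$ through the correlation formula with the signed initial datum $\mu - J_\mu(\delta,\cdot)m$, bound $\mathcal{K}_\beta$ via Corollary \ref{cor:K beta bound} and Lemma \ref{lemma:behavior of H}, and conclude from the weak convergence $J_\mu(\delta,\cdot)m \to \mu$. Your explicit check that $\mathcal{K}_\beta(t,x,\cdot,x,\cdot)$ is bounded and continuous, so that weak convergence of the product measures actually applies, fills in a step that the paper's one-line argument needs but leaves implicit.
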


\begin{proof}[Proof of Lemma \ref{lem:initial condition approx}]
    Let $v_\delta:=u-u_\delta$. Since \eqref{eq: SHE} is linear in the initial condition, $v_\delta$ is the solution to equation \eqref{eq: SHE}  starting from $\mu_\delta(dx):=\mu(dx)-J_\mu(\delta,x)m(dx).$ By Theorem \ref{thm: main result}, the function $g_\delta(t,x,x'):=\E[v_\delta(t,x)v_\delta(t,x')]$ satisfies $$g_\delta(t,x,x')=\Tilde{J}_{\mu_\delta}(t,x,x')+\beta^2\iint_{M^2}\mu_\delta(dz)\mu_\delta(dz')K_\beta(t,z,x,z',x').$$
   In addition, Corollary \ref{cor:K beta bound} together with Lemma \ref{lemma:behavior of H} implies that for any fixed $\epsilon>0$ and  all $x,y,x',y'\in M$  we have
   \[K_\beta(t,x,y,x',y')\le C[G^\epsilon_t(x,y)+1][G^\epsilon_t(x',y')+1]e^{ct}.\]
    Here, $C, c>0$ are constants depending on $\beta$ and $M$. The result then follows from the fact that $\mu_{\delta}\to0$ weakly as $\delta\downarrow0$ and the compactness of $M$. 
\end{proof}

The next lemma shows that the solution is weakly continuous in time at $t=0$.
\begin{lemma}\label{lem: cont dep on initial data}
    Suppose $u(t,x)$ is the solution to equation \eqref{eq: SHE} with initial data $\mu$. We then have for all $\phi\in C(M)$, $$\int_M u(t,x)\phi(x)m(dx) \lra \int_M \phi(x)\mu(dx)\text{ in }L^2(\P),\quad\mathrm{as}\ \ t\downarrow0. $$
    
\end{lemma}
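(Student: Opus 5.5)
We use the mild formulation \eqref{pam:mild} and split
\[
\int_M u(t,x)\phi(x)\,m(dx)-\int_M\phi\,d\mu = \Big(\int_M J_\mu(t,x)\phi(x)\,m(dx)-\int_M\phi\,d\mu\Big)+\beta\int_M I(t,x)\phi(x)\,m(dx),
\]
and we show that each term tends to $0$ in $L^2(\P)$. The first term is deterministic. By Fubini and the symmetry of the heat kernel, $\int_M J_\mu(t,x)\phi(x)m(dx)=\int_M P_t\phi(y)\,\mu(dy)$. Since $M$ is compact and $\phi\in C(M)$, $P_t\phi\to\phi$ uniformly as $t\downarrow0$, because the heat semigroup is Feller on $C(M)$. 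As $\mu$ is finite, the first term tends to $0$.

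For the stochastic term, a stochastic Fubini argument (justified by the moment bounds of Theorem \ref{thm: main result}) gives
\[
\int_M I(t,x)\phi(x)\,m(dx)=\int_0^t\int_M (P_{t-s}\phi)(y)\,u(s,y)\,W(dy,ds).
\]
By It\^o's isometry, its second moment equals
\[
\int_0^t\iint_{M^2}P_{t-s}\phi(y)\,P_{t-s}\phi(y')\,\bm{G}_{\alpha,\rho}(y,y')\,g(s,y,y')\,m(dy)m(dy')\,ds,
\]
where $g(s,y,y')=\E[u(s,y)u(s,y')]$. We bound $|P_{t-s}\phi|\le\|\phi\|_\infty$. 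For $\bm G_{\alpha,\rho}$ we use Proposition \ref{Prop: G_alpha}, which gives $|\bm G_{\alpha,\rho}(y,y')|\le C(1+\bm d(y,y')^{2\alpha-d})$ in the main case $\alpha<d/2$; the cases $\alpha\ge d/2$ are easier. For $g$ we use Cauchy--Schwarz and Theorem \ref{thm: main result}, which give $|g(s,y,y')|\le CJ_\mu(s,y)J_\mu(s,y')e^{2\theta s}$ for $s\le1$. This reduces the problem to showing that
\[
\int_0^t\iint_{M^2}J_\mu(s,y)J_\mu(s,y')\big(1+\bm d(y,y')^{2\alpha-d}\big)\,m(dy)m(dy')\,ds\longrightarrow0\quad\text{as } t\downarrow0 .
\]

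To see this, write $J_\mu(s,y)=\int P_s(z,y)\mu(dz)$ and bound the heat kernels by the Li--Yau estimate, Proposition \ref{Prop:LiYau}. Then Fubini and the double integral bound \eqref{est:2int 2LY Riesz} (together with \eqref{est:2int 1LY Riesz} and \eqref{est: Riesz} for the cross terms containing the constant) show that the inner double integral is at most $C\mu(M)^2(s^{\frac{2\alpha-d}{2}}+1)$, uniformly in the points of the support of $\mu$. Dalang's condition $\alpha>(d-2)/2$ means $\frac{2\alpha-d}{2}>-1$, so the function $s\mapsto s^{\frac{2\alpha-d}{2}}+1$ is integrable near $0$. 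Hence the time integral is $O\big(t^{\frac{2\alpha-d+2}{2}}+t\big)\to0$.

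The main obstacle is the bound on $g$ near $s=0$. The crude bound through $J_\mu$ blows up like $s^{-d/2}$ pointwise, so we must not use any sup norm in space. Instead, we must integrate against the heat kernels first so that the $\mu$-integrals come out last; this is exactly why the bound $J_\mu(s,y)J_\mu(s,y')$, rather than a pointwise bound, has to be fed into Lemma \ref{lem: double integral bounds}. One also has to justify the stochastic Fubini step. This uses the finiteness of
\[
\int_0^t\iint_{M^2}|\phi(x)|\,|\phi(x')|\,P_{t-s}(x,y)P_{t-s}(x',y')\,|\bm G_{\alpha,\rho}(y,y')|\,|g(s,y,y')|\;m(dx)\,m(dx')\,m(dy)\,m(dy')\,ds,
\]
which follows from the same estimate.
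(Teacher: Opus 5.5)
Your proposal is correct and follows essentially the same route as the paper. Both arguments use the mild form, stochastic Fubini, and It\^o's isometry with $|P_{t-s}\phi|\le\|\phi\|_\infty$. Both then use the moment bound of Theorem~\ref{thm: main result} to replace $g$ by $J_\mu(s,y)J_\mu(s,y')$, and Li--Yau plus the double-integral estimates to get the bound $C\mu(M)^2(s^{\frac{2\alpha-d}{2}}+1)$, which is integrable at $0$ under Dalang's condition. Your additions are the explicit treatment of the deterministic term $\int_M P_t\phi\,d\mu\to\int_M\phi\,d\mu$, which the paper leaves implicit, and your remarks on justifying Fubini and on the case $\alpha\ge d/2$.
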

\begin{proof}
    Suppose $0<t\leq T$. Since $u(t,x)$ admits the mild form \eqref{pam:mild}, it suffices to prove that
    \[\label{eq: stoch int to 0 small time} \lim_{t\downarrow0}L(t)=0, \quad\mathrm{in}\ \ L^2(\mathbb{P}),
    \]
    where we have set $L(t)=\int_M I(t,x)\phi(x)m(dx)$.
   
    By stochastic Fubini, we have $$L(t)=\int_0^t \int_M \left(\int_M P_{t-s}(x,y)\phi(x) m(dx)\right)u(s,y) W(ds,dy).$$
    It\^o isometry and Theorem \ref{thm: main result} then gives \begin{align*}
        \E[L(t)^2]=&\int_0^t \int_M\int_M\left(\int_M P_{t-s}(x,y)\phi(x) m(dx)\right)\left(\int_M P_{t-s}(x,y')\phi(x) m(dx)\right)\\
        &\times\E[u(s,y),u(s,y')]\bm{G}_{\alpha,\rho}(y,y')m(dy)m(dy')ds\\
        \leq& C_T\norm{\phi}_{L^\infty(M)}^2\int_0^t \int_M\int_MJ_\mu(s,y)J_\mu(s,y')|\bm{G}_{\alpha,\rho}(y,y')|m(dy)m(dy')ds\\
        =& C_T\int_0^t \iint_{M^2}\iint_{M^2}P_s(y,z)|\bm{G}_{\alpha,\rho}(y,y')|P_s(y',z')m(dy)m(dy') \mu(dz)\mu(dz')ds.
    \end{align*}
    Now applying Corollary \ref{Prop:LiYau} for the heat kernel $P_s$ and Proposition \ref{Prop: G_alpha} for the covariance function $\bm{G}_{\alpha,\rho}$, as well as the integral bounds \eqref{est:2int 2LY Riesz} and \eqref{est: Riesz}, we have \[\sup_{z,z'\in M}\iint_{M^2}P_s(y,z)|\bm{G}_{\alpha,\rho}(y,y')|P_s(y',z')m(dy)m(dy')\leq C(s^{\frac{2\alpha-d}{2}}+1).\]
    Since $\mu$ is a finite measure, integrating the upper bound above in $s$ gives us \[\E[L(t)^2]\leq C\mu(M)^2(t^{\frac{2\alpha-d}{2}+1}+t)\,{\rightarrow}\,0, \quad \mathrm{as}\ \ t\downarrow0.\]
    This completes the proof.
\end{proof}
\subsection{Comparison Principles}

In a recent work \cite{FanSunYang25+}, the authors established weak and strong comparison principles for \eqref{eq: SHE} on general metric measure spaces. The following theorem is a restatement of their result in our setting.

\begin{theorem}\label{th: comparision with nice initial}
Let $f_1,f_2\in C(M)$, and denote by $u_i$ the solutions to equation \eqref{eq: SHE} starting from initial conditions $f_i(x)m(dx)$ for $i=1,2$.

\begin{enumerate}
    \item[\textup{(i)}] If $f_1(x)\leq f_2(x)$ for all $x\in M$, then we have
    $$\mathbb{P}\left[u_1(t,x)\leq u_2(t,x)\ \mathrm{for\ all}\ t\geq0, x\in M\right]=1.$$
    
        \item[\textup{(ii)}] If $f_1 \leq f_2$ on $M$ and $f_1(x) < f_2(x)$ for some $x\in M$, then
        $$\mathbb{P}\left[u_1(t,x)< u_2(t,x)\ \mathrm{for\ all}\ t>0, x\in M\right]=1.$$
\end{enumerate}
\end{theorem}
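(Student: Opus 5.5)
Since the theorem is stated as a restatement of the comparison principles of \cite{FanSunYang25+}, the plan is to verify that our setting satisfies the hypotheses of their general metric-measure-space result and then transfer the conclusions. The framework there requires, roughly: (a) a heat kernel (Markov semigroup) on a compact metric measure space with Gaussian-type upper bounds and Hölder/gradient-type increment bounds; (b) a spatially homogeneous-in-time Gaussian noise whose covariance kernel satisfies a Dalang-type integrability condition against the heat kernel; and (c) existence, uniqueness and moment bounds for mild solutions with continuous initial data. I would check (a) using Proposition~\ref{Prop:LiYau}, Lemma~\ref{lem: heat kernel bd}, and the increment estimates \eqref{est:HK space inc}--\eqref{est:HK time inc} established in the proof of Lemma~\ref{lem: Holder cont}. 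For (b), Proposition~\ref{Prop: G_alpha} together with \eqref{est: pt gauss riesz 1 int} and \eqref{est:2int 2LY Riesz} gives $\int_0^t \sup_{x,x'}\iint P_s(x,z)P_s(x',z')|\bm G_{\alpha,\rho}(z,z')|\,dz\,dz'\,ds<\infty$ exactly when $\alpha>(d-2)/2$. Part (c) is Theorem~\ref{thm: main result} (for continuous $f_i$, even the function-valued theory of \cite{CO25} suffices), and joint continuity is Lemma~\ref{lem: Holder cont}.

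For part (i), I would follow the standard route of \cite{CH19} as adapted in \cite{FanSunYang25+}. By linearity, $v=u_2-u_1$ solves \eqref{eq: SHE} with initial datum $(f_2-f_1)\,dm\ge 0$, so it suffices to prove nonnegativity for nonnegative continuous data. I would approximate the noise by a spatially smooth, time-mollified noise $W^{(n)}$ (truncate the eigenexpansion \eqref{E:NoiseCov} to finitely many modes and mollify in time). The corresponding equation is then a random parabolic PDE with smooth potential, and a Feynman--Kac representation with respect to Riemannian Brownian motion gives $v^{(n)}\ge 0$. I would then pass to the limit $v^{(n)}\to v$ in $L^2(\Omega)$ pointwise, with the Wong--Zakai correction term being a constant multiple of $\bm G_{\alpha,\rho}$ on the diagonal, after smoothing. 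Continuity upgrades the a.s. pointwise statement to one holding for all $(t,x)$ simultaneously.

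For part (ii), I would show strict positivity for a nonnegative, nonzero continuous datum. The plan is the Mueller/Chen--Huang argument. Choose a ball where $f_2-f_1\ge c>0$. Using a large-deviation/small-ball estimate for the stochastic integral on short time intervals (via BDG and the increment bounds) together with the Gaussian lower bound for $P_t$ on compact $M$, show that positivity on a ball propagates to a slightly larger ball with high probability. By compactness finitely many steps cover $M$. The Markov property (restarting at rational times, using part (i)) then gives positivity for all $t>0$.

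The main obstacle is the Wong--Zakai/approximation step in (i) on a manifold, together with uniform-in-$n$ moment control near the diagonal singularity of $\bm G_\alpha$ when $\alpha<d/2$. I would handle this by reusing the $\mathcal L_n$ iteration bounds (Theorem~\ref{thm: Ln bd}) with $\bm G_{\alpha,\rho}$ replaced by its truncations, which are dominated uniformly by $C(\bm d^{2\alpha-d}+1)$.
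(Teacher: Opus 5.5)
Your first paragraph is the paper's approach. The paper gives no argument for Theorem~\ref{th: comparision with nice initial}; it simply imports it from \cite{FanSunYang25+} as a restatement, without even the hypothesis check (a)--(c) you propose. That check is a useful addition, though the bounds you cite only show that $\alpha>(d-2)/2$ is \emph{sufficient} for the Dalang-type integrability, not ``exactly when''. Everything after your first paragraph is a self-contained reproof in the spirit of \cite{CH19}, which the paper does not attempt. As an outline it is sensible, but the way you propose to handle what you call the main obstacle does not work as stated.

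The eigen-truncations $\bm G^{(N)}_{\alpha,\rho}(x,y)=\rho/m_0+\sum_{1\le n\le N}\lambda_n^{-\alpha}\phi_n(x)\phi_n(y)$ are \emph{not} dominated by $C(\bm d(x,y)^{2\alpha-d}+1)$ uniformly in $N$, because sharp spectral cutoffs produce Gibbs-type oscillations. On $\bS^2$, grouping by eigenspaces, the $k$-th block at a fixed angle $\theta\in(0,\pi)$ is $(k(k+1))^{-\alpha}\frac{2k+1}{4\pi}P_k(\cos\theta)\approx c_\theta\, k^{1/2-2\alpha}\cos\big((k+\tfrac12)\theta-\tfrac{\pi}{4}\big)$. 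So for $0<\alpha<\tfrac14$, which lies inside Dalang's range when $d=2$, the partial sums oscillate with amplitude of order $N^{1/2-2\alpha}$. Feeding truncated kernels into Theorem~\ref{thm: Ln bd} therefore gives no uniform control.

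The repair is to avoid pointwise bounds on the approximating kernels altogether. In the It\^o isometry/BDG step, use the pathwise spectral domination $\|h\|_{\mathcal H_N}\le\|h\|_{\mathcal H^{\alpha,\rho}}$. Only then bound $\|P_{t-s}(x,\cdot)u^{(N)}(s,\cdot)\|^2_{\mathcal H^{\alpha,\rho}}$ using $|\bm G_{\alpha,\rho}|\le C(\bm d^{2\alpha-d}+1)$ and Minkowski's inequality. For continuous data and $m_N(t):=\sup_x\|u^{(N)}(t,x)\|_{L^p(\Omega)}^2$, this gives a renewal inequality uniform in $N$:
\[
m_N(t)\le C\|f\|_\infty^2+C\int_0^t m_N(s)\big(1+(t-s)^{\frac{2\alpha-d}{2}}\big)\,ds .
\]
No $\mathcal L_n$ machinery is needed for bounded data. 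The tail $\sum_{n>N}$ of the same spectral sum then gives $u^{(N)}\to u$ in $L^2(\Omega)$. Alternatively, use a smooth cutoff such as the heat-regularized noise $P_\delta\dot W$. Its covariance is the integral \eqref{def: G_alpha etc} shifted in time by a multiple of $\delta$, and it \emph{is} dominated by $C(\bm d^{2\alpha-d}+1)$ uniformly in $\delta$.

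Two smaller points. First, with a purely spatial truncation (finitely many modes, white in time), no Wong--Zakai limit is needed. The It\^o equation then has a Feynman--Kac representation with the potential $-\frac{\beta^2}{2}\bm G^{(N)}_{\alpha,\rho}(x,x)$ built in, which gives $u^{(N)}\ge 0$ directly. Second, in (ii) the phrase ``with high probability'' must be quantified. By linearity, renormalizing the amplitude at each step leaves the per-step failure probability unchanged. What makes Mueller's scheme work is a per-step failure bound of order $\exp(-c\delta^{-\eta})$ for a time step of length $\delta$, which beats the union bound over the $O(1/\delta)$ steps as $\delta\to 0$.
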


The above comparison theorem can be extended to equation \eqref{eq: SHE} with measure-valued initial condition. To this end, we first introduce some standard notation for comparing measures.
\begin{itemize}
    \item For any measure $\mu$ on $M$, $\mu \ge 0$ means that $\mu$ is nonnegative, that is, $\mu(A) \ge 0$ for all Borel sets $A$. We write $\mu > 0$ if, in addition, there exists a Borel set $A$ such that $\mu(A) > 0$.
    
    \item For two measures $\mu,\nu$ on $M$, $\mu\ge\nu$ means that $\mu - \nu \ge 0$, and $\mu>\nu$ means that $\mu - \nu > 0$.
\end{itemize}

\begin{theorem}[Weak Comparison Principle]\label{thm:Weak Comparison}
Suppose $\alpha > {(d-2)}/{2}$, and let $\mu_1,\mu_2$ be finite Borel measures on $M$ satisfying $\mu_1 \le \mu_2$. 
Denote by $u_i(t,x)$, $i=1,2$, the solution to equation \eqref{eq: SHE} driven by the same noise but with initial condition $\mu_i$, $i=1,2$, respectively. Then
\[
\P\bigl[u_1(t,x) \le u_2(t,x) \text{ for all } t>0,\ x\in M\bigr] = 1.
\]
\end{theorem}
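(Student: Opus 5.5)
The plan is to reduce to the smooth-initial-data comparison of Theorem~\ref{th: comparision with nice initial} by mollifying the initial measures with the heat semigroup, and then pass to the limit using Lemma~\ref{lem:initial condition approx} together with the continuity of Lemma~\ref{lem: Holder cont}.

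\emph{Step 1: smoothing.} For $\delta>0$ set $f_i^\delta(x):=J_{\mu_i}(\delta,x)=\int_M P_\delta(x,y)\mu_i(dy)$, $i=1,2$. Since $P_\delta$ is smooth and strictly positive and $\mu_i$ is finite, $f_i^\delta\in C(M)$. Moreover $\mu_2-\mu_1\ge0$ gives
\[
f_2^\delta-f_1^\delta=J_{\mu_2-\mu_1}(\delta,\cdot)\ge 0 .
\]
Let $u_i^\delta$ be the solutions started from $f_i^\delta\,m(dx)$, driven by the same noise. Theorem~\ref{th: comparision with nice initial}(i) then yields an event $\Omega_\delta$ of full probability on which $u_1^\delta(t,x)\le u_2^\delta(t,x)$ for all $t\ge0$ and $x\in M$.

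\emph{Step 2: limit at fixed points.} Take $\delta_n=1/n$. By Lemma~\ref{lem:initial condition approx}, for each fixed $(t,x)$ with $t>0$ we have $u_i^{\delta_n}(t,x)\to u_i(t,x)$ in $L^2(\P)$. Let $Q$ be a countable dense subset of $(0,\infty)\times M$, for instance rational times times a countable dense subset of $M$. By a diagonal argument, extract a single subsequence along which $u_i^{\delta_n}(t,x)\to u_i(t,x)$ almost surely, simultaneously for every $(t,x)\in Q$. On the full-probability event $\bigcap_n\Omega_{\delta_n}$ intersected with these convergence events, passing to the limit in the inequality of Step 1 gives $u_1(t,x)\le u_2(t,x)$ for all $(t,x)\in Q$ almost surely.

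\emph{Step 3: extension to all points.} By Lemma~\ref{lem: Holder cont}, $u_1$ and $u_2$ admit modifications that are jointly continuous on $[\varepsilon,T]\times M$ for every $0<\varepsilon<T$. Taking $\varepsilon=1/k$ and $T=k$ over $k\in\mathbb N$, they are jointly continuous on $(0,\infty)\times M$. The inequality on the dense set $Q$ therefore extends to all $t>0$ and $x\in M$ almost surely, which is the claim of the theorem. The statement is understood for these continuous modifications, as is implicit when asserting a property "for all $t,x$".

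\emph{Main obstacle.} The delicate point is Step 2. Lemma~\ref{lem:initial condition approx} only gives pointwise $L^2$ convergence, not uniform convergence, so one cannot directly take limits in an inequality holding for all $(t,x)$ simultaneously. The device above is to pass first to a countable dense set and then use joint continuity of the limit. This requires that continuity hold for $u_i$ themselves, which have measure-valued initial data, and not merely for $u_i^\delta$. That is exactly what Lemma~\ref{lem: Holder cont} supplies, since it is stated away from $t=0$ for general finite $\mu$.
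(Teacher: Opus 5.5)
Your proposal is correct and follows the paper's proof: smooth the initial data as $J_{\mu_i}(\delta,\cdot)$, apply the comparison principle for continuous initial data, then pass to the limit using Lemma~\ref{lem:initial condition approx} on a countable dense set and extend by the joint continuity from Lemma~\ref{lem: Holder cont}, with the limiting step spelled out in more detail than in the paper. The diagonal subsequence is not actually needed: an $L^2(\P)$-limit of a.s.\ nonnegative random variables is a.s.\ nonnegative, so $u_2(t,x)-u_1(t,x)\ge 0$ a.s.\ directly at each point of $Q$.
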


\begin{proof}
    For $\delta>0$, let $u_{\delta,i},i=1,2$ be the solutions to \eqref{eq: SHE}  starting from $J_{\mu_i}(\delta,x)m(dx)$ and set $$v_\delta(t,x):=u_{\delta,2}(t,x)-u_{\delta,1}(t,x).$$ By Theorem \ref{th: comparision with nice initial}-(i), we have
    $$\P[v_\delta(t,x)\geq 0]=1\text{ for all }t>0,x\in M.$$
   Now the claim follows from Lemma~\ref{lem:initial condition approx} and the continuity of $u(t,x)$ established in Lemma~\ref{lem: Holder cont}.
\end{proof}

\begin{theorem}[Strong Comparison Principle]\label{thm:Strong Comparison}
    Suppose $\alpha>{(d-2)}/{2}$, and let $\mu_1,\mu_2,u_1,u_2$ be the same as in Theorem \ref{thm:Weak Comparison}.
    Then, $\mu_1<\mu_2$ implies \[\P[u_1(t,x)< u_2(t,x)\text{ for all }t>0,x\in M]=1.\]
\end{theorem}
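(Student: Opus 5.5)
By linearity, $v:=u_2-u_1$ is the solution of \eqref{eq: SHE} driven by the same noise with initial condition $\nu:=\mu_2-\mu_1>0$, a nonzero nonnegative finite measure. It therefore suffices to show that $\P[v(t,x)>0 \text{ for all } t>0, x\in M]=1$. We will use the Markov property of the solution to move the initial time to a positive time, where the solution is a continuous function, and then apply Theorem~\ref{th: comparision with nice initial}-(ii).

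First fix $\tau>0$. By Theorem~\ref{thm:Weak Comparison} applied to the pair $0\le\nu$, we have $v\ge0$ almost surely. By Lemma~\ref{lem: Holder cont}, $v(\tau,\cdot)$ has a continuous version on $M$. We next check that $v(\tau,\cdot)\not\equiv0$ almost surely. By Lemma~\ref{lem: cont dep on initial data} with $\phi\equiv1$, $\int_M v(\tau,x)m(dx)\to \nu(M)>0$ in $L^2(\P)$ as $\tau\downarrow0$. Along a sequence $\tau_k\downarrow0$ this gives $\P[v(\tau_k,\cdot)\equiv0]\to0$. In addition, once $v(\tau,\cdot)\not\equiv0$, the conclusion below shows $v>0$ at all later times. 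Hence the events $\{v(\tau_k,\cdot)\not\equiv 0\}$ increase in $k$ up to probability one.

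Next comes the restart. For each $\tau$, the shifted field $w(t,x):=v(\tau+t,x)$ solves \eqref{eq: SHE} with the time-shifted noise $\theta_\tau W$, which is independent of $\mathcal{F}_\tau$, and with initial condition $v(\tau,x)m(dx)$. This follows from the mild formulation \eqref{pam:mild} and the semigroup property of $P_t$, together with uniqueness from Theorem~\ref{thm: main result}. Conditioning on $\mathcal{F}_\tau$, the initial datum $f_2:=v(\tau,\cdot)$ is a continuous, nonnegative, not identically zero function. Applying Theorem~\ref{th: comparision with nice initial}-(ii) with $f_1=0$ (whose solution is $u_1\equiv0$) gives $w(t,x)>0$ for all $t>0$ and $x\in M$ almost surely. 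Thus $v>0$ on $(\tau,\infty)\times M$ on the event $\{v(\tau,\cdot)\not\equiv0\}$. Taking the union over $\tau_k\downarrow0$ yields $v>0$ on $(0,\infty)\times M$ almost surely.

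The main obstacle is making the restart step rigorous. One must verify that the conditional application of Theorem~\ref{th: comparision with nice initial} is legitimate when the initial datum is random and $\mathcal{F}_\tau$-measurable. This can be handled by approximating $v(\tau,\cdot)$ with countably many deterministic continuous functions, or by invoking the Markov property as in \cite{CO25,CH19}. The passage from $\tau_k$ to all $t>0$ is then routine by monotonicity and the continuity from Lemma~\ref{lem: Holder cont}.
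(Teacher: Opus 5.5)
Your proof follows the paper's argument. You pass to $v=u_2-u_1\ge0$, use a restart at a positive time together with Theorem~\ref{th: comparision with nice initial}-(ii) and the weak continuity at $t=0$ from Lemma~\ref{lem: cont dep on initial data} to show $v(\tau,\cdot)\not\equiv0$ almost surely, and then restart once more.

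There is one slip in direction that breaks the final step as written. Nonvanishing at the earlier time $\tau_{k+1}$ forces $v>0$ at the later time $\tau_k$, so the events $A_k=\{v(\tau_k,\cdot)\not\equiv0\}$ \emph{decrease} in $k$, up to null sets; they do not increase. The paper uses exactly this nesting, in contrapositive form. It is this decrease that gives $\P(A_k)\ge\lim_j\P(A_j)=1$ for every $k$. You should therefore intersect the $A_k$ rather than take their union: an increasing union would only give positivity after some random time $\tau_K$, not on all of $(0,\infty)\times M$.
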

\begin{proof}
    Again let $u(t,x):=u_2(t,x)-u_1(t,x)$. It suffices to show that for each fixed $\varepsilon>0$, \begin{align}\label{eq:positive at positive time}\P[u(t,x)>0, \text{ for all }t\geq \varepsilon,x\in M]=1.\end{align}
   
    We first prove by contradiction that
    \begin{equation}
        \label{eqn: prob positive time not 0} \P[u(\varepsilon,x)=0 \text{ for all }x\in M]=0.
    \end{equation}
    Indeed, since $u(t,x)$ is continuous in $x$ almost surely by Lemma \ref{lem: Holder cont}, weak comparison(Theorem \ref{thm:Weak Comparison}) gives us $u(t,x)\geq 0$ a.s. Thus if \eqref{eqn: prob positive time not 0} is false, by the Markov property and Theorem \ref{th: comparision with nice initial}-(ii), we must have, at all times $\varepsilon'\in [0,\varepsilon]$,  $u(\varepsilon',x)=0$ for all $x\in M$, with some strict positive probability. But this contradicts Lemma \ref{lem: cont dep on initial data} as $\varepsilon'\downarrow0$, and hence completes the proof of \eqref{eqn: prob positive time not 0}.
    \smallskip
    
   Thanks to \eqref{eqn: prob positive time not 0}, there is a subspace $\Omega'\subset\Omega$ with full probability such that for all $\omega\in \Omega'$, there exists $x\in M$ such that $u(\varepsilon,x,\omega)>0$. In addition, $x\mapsto u(\varepsilon,x,\omega)$ is continuous thanks to Lemma \ref{lem: Holder cont}. Next, we set $v(t,x):=u(t+\varepsilon,x)$. By the Markov property of $u$, $v$ satisfies
    \begin{equation}\label{eq: shifted SHE}
        v(t,x)=\int_M P_t(x,y)u(\varepsilon,y)m(dy)+\beta\int_0^t \int_M  P_{t-s}(x,y)v(s,y)W_\varepsilon(ds,dy),
    \end{equation}
    where $W_\epsilon(t,x):=W(t+\epsilon,x)$ is the time-shifted noise. Then Theorem \ref{th: comparision with nice initial}-(ii) implies that $$\P[v_\omega(t,x)>0\text{ for all }t>0,x\in M]=1,$$ where $v_\omega$ is the solution to \eqref{eq: shifted SHE} starting from $u(\epsilon,x,\omega)$. Therefore \eqref{eq:positive at positive time} holds true, which finishes the proof.
\end{proof}

The following corollary follows directly from Theorem~\ref{thm:Strong Comparison} and Lemma~\ref{lem: Holder cont}, and will be used in the proof of the moment lower bound.
\begin{corollary}[Strict Positivity]\label{cor: strict pos}
    Suppose $u(t,x)$ is the solution to \eqref{eq: SHE} with initial condition $\mu>0$. Fix any $t>0$. We have for every $\varepsilon>0$ sufficiently small, $$\P\left[\inf_{x\in M}u(t,x)\geq\varepsilon\right]>0.$$
   
\end{corollary}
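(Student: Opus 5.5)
The plan is to combine the strict positivity of $u(t,\cdot)$ from Theorem~\ref{thm:Strong Comparison} with the continuity of the field from Lemma~\ref{lem: Holder cont} and the compactness of $M$.

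\textbf{Step 1: reduce to showing $\inf_x u(t,x)>0$ almost surely.} Fix $t>0$. Theorem~\ref{thm:Strong Comparison} is applied with $\mu_1=0$ and $\mu_2=\mu$. Since $\mu>0=\mu_1$, and the solution started from the zero measure is identically $0$ by uniqueness of the mild solution, we obtain
\[
\P\bigl[u(t,x)>0 \text{ for all } x\in M\bigr]=1.
\]

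\textbf{Step 2: pass from pointwise positivity to a positive infimum.} Choose $\varepsilon_0<t<T$ and use the jointly continuous modification from Lemma~\ref{lem: Holder cont} on $[\varepsilon_0,T]\times M$. Then, almost surely, $x\mapsto u(t,x)$ is a continuous and strictly positive function on the compact set $M$. It therefore attains its minimum, so
\[
X:=\inf_{x\in M}u(t,x)>0 \quad \text{almost surely}.
\]
One technical point needs care here. The strong comparison statement is made for the version of $u$ used in Theorem~\ref{thm:Strong Comparison}, and I must check it refers to the same continuous modification. This is consistent, since that theorem's proof already invokes Lemma~\ref{lem: Holder cont}. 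Alternatively, two continuous modifications agree on a dense countable set and hence everywhere, which settles the issue.

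\textbf{Step 3: conclude by continuity of measure.} Because $X>0$ almost surely, we have $\P[X\geq\varepsilon]\uparrow\P[X>0]=1$ as $\varepsilon\downarrow0$. Hence $\P[X\ge\varepsilon]>0$, and in fact it exceeds $1/2$, for all sufficiently small $\varepsilon$. This is the claim.

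I expect the only genuine obstacle to be the measurability and modification issue in Step 2. The infimum over the uncountable set $M$ must be a random variable, which holds because, by continuity, it equals the infimum over a countable dense subset. The null sets from the comparison principle and the continuity lemma must also be combined. Everything else follows directly from earlier results.
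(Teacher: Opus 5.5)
Your proposal is correct and follows the route the paper indicates when it says the corollary ``follows directly from Theorem~\ref{thm:Strong Comparison} and Lemma~\ref{lem: Holder cont}''. You take $\mu_1=0$, $\mu_2=\mu$ to get $u(t,\cdot)>0$ on $M$ almost surely, then use the continuous modification and compactness of $M$ to obtain $\inf_{x\in M}u(t,x)>0$ a.s., and conclude by continuity of measure. Your added remarks on measurability of the infimum and indistinguishability of continuous modifications are the right bookkeeping that the paper leaves implicit.
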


\subsection{Exponential lower bound for Moments}

We are now ready to state and prove a matching exponential lower bound for the moments. Recall that $m_0$ is the volume of the manifold $M$ and $\rho\geq0$ is introduced in the definition of the noise \eqref{E:NoiseCov} and \eqref{def: G_alpha etc}.

\begin{theorem}\label{thm: lower bound}
    Assume $\alpha>{(d-2)}/{2}$ and $\rho>0$, and $u(t,x)$ solves equation \eqref{eq: SHE}  starting from $\mu>0$. Then, we have $$\liminf_{t\uparrow+\infty} \frac{1}{t}\ln\E[u(t,x)^2]\geq \frac{\beta^2\rho}{m_0}.$$
\end{theorem}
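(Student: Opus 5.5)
We reduce to function-valued initial data at a positive time and then run the second-moment iteration from \cite{CO25}, using the zero eigenmode of the covariance. Fix $t_0>0$. By the Markov property (as in \eqref{eq: shifted SHE}), $v(t,x):=u(t+t_0,x)$ solves \eqref{eq: SHE} with initial condition $u(t_0,y)m(dy)$ and the time-shifted noise $W_{t_0}$. This noise is independent of $\mathcal{F}_{t_0}$. By Corollary~\ref{cor: strict pos}, there is $\varepsilon>0$ such that the event $A:=\{\inf_{x}u(t_0,x)\geq\varepsilon\}\in\mathcal{F}_{t_0}$ has $\P(A)>0$.

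Let $w$ denote the solution started from the constant function $\varepsilon$ and driven by $W_{t_0}$. Conditioning on $\mathcal{F}_{t_0}$ and applying the weak comparison principle (Theorem~\ref{th: comparision with nice initial}(i)) pathwise on $A$ gives $v\geq w\geq0$ on $A$. Since $w$ is independent of $\mathcal{F}_{t_0}$,
\[
\E[u(t+t_0,x)^2]\geq \E[\1_A v(t,x)^2]\geq \P(A)\,\E[w(t,x)^2].
\]
Here $\P(A)$ is a fixed positive constant, so it disappears after taking $\frac1t\ln$. It therefore suffices to prove
\[
\liminf_{t\to\infty}\tfrac1t\ln\E[w(t,x)^2]\geq \beta^2\rho/m_0
\]
for constant initial data. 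I would avoid the conditioning subtlety by writing $v$ explicitly as a functional of $u(t_0,\cdot)$ and $W_{t_0}$.

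For constant data, $g(t,x,x'):=\E[w(t,x)w(t,x')]$ satisfies the Itô-isometry identity preceding \eqref{iteration corelation} with $\widetilde J=\varepsilon^2$. All terms of the resulting series are nonnegative, because $P\geq0$ and we may choose $\rho$ large so that $\bm{G}_{\alpha,\rho}\geq0$. For a general $\rho>0$, I would instead use the spectral form of the covariance, as below.

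The key lower bound comes from keeping only the zero mode. Write $\bm{G}_{\alpha,\rho}=\rho/m_0+\bm{G}_\alpha$, where $\bm{G}_\alpha$ is a positive semidefinite kernel (its spectral coefficients are $\lambda_n^{-\alpha}>0$). Then, for nonnegative $\varphi(z,z')=P_{t-s}(x,z)P_{t-s}(x',z')g(s,z,z')$, the $\bm{G}_\alpha$ contribution is nonnegative. This holds because $g(s,\cdot,\cdot)$ is itself a covariance kernel, and the tensor of two positive semidefinite forms, integrated against $P\otimes P$, is again positive semidefinite. Dropping that contribution leaves
\[
g(t,x,x')\geq\varepsilon^2+\frac{\beta^2\rho}{m_0}\int_0^t\iint P_{t-s}(x,z)P_{t-s}(x',z')g(s,z,z')\,dz\,dz'\,ds .
\]
By induction on the Picard iterates (the same iteration \eqref{iteration corelation}, now with a constant kernel), one shows $g(t,x,x')\geq\varepsilon^2\sum_n(\beta^2\rho t/m_0)^n/n!=\varepsilon^2e^{\beta^2\rho t/m_0}$. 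The induction works because $\int_M P_{t-s}(x,z)\,dz=1$, so each constant lower bound propagates exactly. Setting $x=x'$ gives the claimed rate, and shifting back by $t_0$ does not change the $\liminf$.

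The main obstacle is justifying that the $\bm{G}_\alpha$ part can be dropped, namely the positive semidefiniteness step. This must be done at the level of the iterates $\mathcal{L}_n$, since a priori $\bm{G}_{\alpha,\rho}$ may be negative somewhere. I would first try expanding everything in the eigenbasis $\{\phi_n\}$. Since the heat semigroup is diagonal in this basis, each iterate becomes a sum of products of squares weighted by positive coefficients ($\rho$ or $\lambda_n^{-\alpha}$). Keeping only the $n=0$ terms then gives the lower bound rigorously, with convergence of the series guaranteed by Corollary~\ref{cor:K beta bound}.
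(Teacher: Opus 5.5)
Your reduction (shift by $t_0$, the event $A=\{\inf_x u(t_0,x)\geq\varepsilon\}$ with $\P(A)>0$ from Corollary~\ref{cor: strict pos}, then a bound for data bounded below by $\varepsilon$) is exactly the paper's. Comparing with the solution $w$ started from the constant $\varepsilon$ and driven by $W_{t_0}$ is a cleaner way to handle the conditioning than the paper's direct appeal to \cite[Theorem 5.1]{CO25}.

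The gap is in your self-contained proof of the constant-data bound. The Schur-product argument only shows that
\[
(x,x')\mapsto\int_0^t\iint_{M^2} P_{t-s}(x,z)P_{t-s}(x',z')\,\bm{G}_\alpha(z,z')\,g(s,z,z')\,m(dz)m(dz')\,ds
\]
is a positive semidefinite kernel, hence nonnegative when $x=x'$. Your induction (``each constant lower bound propagates exactly'') needs $g(t,x,x')\geq\varepsilon^2+\frac{\beta^2\rho}{m_0}\int_0^t\iint PPg$ at every pair, because the next step integrates $g(s,z,z')$ over $z\neq z'$. Off the diagonal this inequality is false. The function $\bm{G}_\alpha(x,\cdot)$ has zero mean but is positive near $x$, so $\bm{G}_\alpha(x,x')<0$ for some $x\neq x'$. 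For such a pair the heat kernels concentrate at $(x,x')$ as $t\downarrow0$, giving $g(t,x,x')=\varepsilon^2\bigl(1+\beta^2t(\rho/m_0+\bm{G}_\alpha(x,x'))+o(t)\bigr)$. This lies strictly below both your intermediate right-hand side and $\varepsilon^2e^{\beta^2\rho t/m_0}$, so there is no off-diagonal bound to propagate. Enlarging $\rho$ is also not allowed, since the target rate is $\beta^2\rho/m_0$ for the given $\rho$.

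The eigenbasis idea in your last paragraph is the right repair, but it has to replace the induction and be used on the diagonal only. Iterating the mild equation and using the It\^o isometry,
\[
\E[w(t,x)^2]=\sum_{k\geq0}\beta^{2k}\int_{0<s_1<\cdots<s_k<t}\|g_k(s)\|^2_{(\mathcal{H}^{\alpha,\rho})^{\otimes k}}\,ds,
\]
where $g_k(s)(y)=P_{t-s_k}(x,y_k)\prod_{i=1}^{k-1}P_{s_{i+1}-s_i}(y_{i+1},y_i)\,J(s_1,y_1)$, $J\equiv\varepsilon$ is the free solution, and the $k=0$ term is $J(t,x)^2$. Each norm equals $\sum_{n_1,\dots,n_k}\prod_i c_{n_i}\langle g_k,\phi_{n_1}\otimes\cdots\otimes\phi_{n_k}\rangle^2$ with $c_0=\rho$ and $c_n=\lambda_n^{-\alpha}$. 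By the semigroup property, the all-zero term equals $\rho^k m_0^{-k}\bigl(\int_{M^k}g_k\bigr)^2=\rho^km_0^{-k}J(t,x)^2$. Keeping only these terms and integrating over the simplices gives $\E[w(t,x)^2]\geq\varepsilon^2e^{\beta^2\rho t/m_0}$.

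This computation never uses constant or bounded-below data. Run on $u$ itself, it gives $\E[u(t,x)^2]\geq J_\mu(t,x)^2e^{\beta^2\rho t/m_0}$, and $J_\mu(t,x)\to\mu(M)/m_0>0$ by the spectral gap. So, once repaired, your argument proves the theorem without the Markov shift, strict positivity, or any comparison principle. That is a much lighter route than the paper's, which invokes \cite{FanSunYang25+} and Corollary~\ref{cor: strict pos} only to reduce to \cite[Theorem 5.1]{CO25}.
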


\begin{remark}
    For $p\geq2$, by the elementary relation $\|u\|_{L^p(\mathbb{P})}\geq\|u\|_{L^2(\mathbb{P})}$, one immediately obtains an exponential lower bound for the $p$-th moment of $u$.
\end{remark}

\begin{proof}[Proof of Theorem \ref{thm: lower bound}]
  
Fix $T>0$ and set $v(t,x):=u(t+T,x)$  as in the proof of Theorem \ref{thm:Strong Comparison}. $v$ is the solution to \eqref{eq: SHE} starting from the random initial condition $u(T,x)$ with time shifted noise $W_T(t,x):=W(t+T,x)$. Clearly it suffices to prove the result for $v$. We first condition on the event $\{\inf_{x\in M}u(T,x)\geq\varepsilon\}$ and employ the elementary lower bound 
    \begin{align}\label{eqn: lower bound by conditional lower bound}\E[v(t,x)^2]\geq \E\left[v(t,x)^2\ \big|\ {\inf_{x\in M}}u(T,x)\geq\varepsilon\right]\ {\times\ \mathbb{P}\left[\inf_{x\in M}u(T,x)\geq\varepsilon\right]}.\end{align}
Note that on the event $\{\inf_{x\in M}u(T,x)\geq\varepsilon\}$, $v(t,x)$ is a solution to \eqref{eq: SHE} with function-valued initial data uniformly bounded from below. One can therefore apply \cite[Theorem 5.1]{CO25} and obtain
 \begin{align}\label{eqn: conditional lower bound}\E\left[v(t,x)^2\ \big|\ {\inf_{x\in M}}u(T,x)>\varepsilon\right]\geq \varepsilon^2 e^{\frac{\beta^2\rho}{m_0}t}.
    \end{align}
    In addition,  we have by Corollary \ref{cor: strict pos},
    \begin{align}\label{eqn: pisitve proba}\mathbb{P}\left[\inf_{x\in M}u(T,x)\geq\varepsilon\right]>0\end{align}
    Plugging \eqref{eqn: conditional lower bound} and \eqref{eqn: pisitve proba} into \eqref{eqn: lower bound by conditional lower bound} finishes the proof.
\end{proof}

\printbibliography

\end{document}